\documentclass[11pt,oneside]{amsart}
\usepackage{amsmath,ifthen, amsfonts, amssymb, srcltx,
   amsopn, textcomp}

\usepackage{pdfsync}

\usepackage{hyperref}
\usepackage{graphicx}

\usepackage[small]{caption}

\newcommand{\showcomments}{yes}
\renewcommand{\showcomments}{no}

\newcommand{\hidetodo}[1]
{\ifthenelse{\equal{\showcomments}{yes}}%
{#1}
}

\newsavebox{\commentbox}
\newenvironment{com}%
{\ifthenelse{\equal{\showcomments}{yes}}%
{\footnotemark
        \begin{lrbox}{\commentbox}
        \begin{minipage}[t]{1.25in}\raggedright\sffamily\tiny
        \footnotemark[\arabic{footnote}]}
{\begin{lrbox}{\commentbox}}}%
{\ifthenelse{\equal{\showcomments}{yes}}%
{\end{minipage}\end{lrbox}\marginpar{\usebox{\commentbox}}}
{\end{lrbox}}}

\newtheorem{thm}{Theorem}[section]
\newtheorem{lem}[thm]{Lemma}

\newtheorem{cor}[thm]{Corollary}

\newtheorem{prop}[thm]{Proposition}

\theoremstyle{definition}
\newtheorem{defn}[thm]{Definition}
\newtheorem{rem}[thm]{Remark}
\newtheorem{exmp}[thm]{Example}

\newtheorem{prob}[thm]{Problem}

\newcommand{\neb}{\mathcal N}

\DeclareMathOperator{\stabilizer}{Stabilizer}

\newcommand{\hull}{\text{\sf hull}}

\newcommand{\dist}{\textup{\textsf{d}}}

\newcommand{\field}[1]{\mathbb{#1}}
\newcommand{\integers}{\ensuremath{\field{Z}}}

\newcommand{\naturals}{\ensuremath{\field{N}}}
\newcommand{\reals}{\ensuremath{\field{R}}}

\newcommand{\hyperbolic}{\ensuremath{\field{H}}}

\DeclareMathOperator{\Isom}{Isom}

\renewcommand{\angle}{\sphericalangle}

\DeclareMathOperator{\diameter}{\text{diam}}

\newcommand{\hh}{\textsf{h}}
\newcommand{\vv}{\textsf{v}}

\setlength{\textwidth}{5.5in}
\setlength{\textheight}{8.15in}
\hoffset=-.27in

\begin{document}

\title{aTmenability of some graphs of groups with cyclic edge groups}
\author{Mathieu Carette}
\email{mathieu.carette@uclouvain.be}
\author[D.~T.~Wise]{Daniel T. Wise}
           \address{Dept. of Math. \& Stats.\\
                    McGill Univ. \\
                    Montreal, QC, Canada H3A 0B9 }
           \email{wise@math.mcgill.ca}
\author{Daniel Woodhouse}
\email{daniel.woodhouse@mail.mcgill.ca}
\subjclass[2010]{20F67, 20F65, 20E06}
\keywords{Haagerup property, aTmenability, CAT(0) cube complexes, graphs of groups}
\date{\today}
\thanks{Research supported by NSERC}

\maketitle

\begin{com}
{\bf \normalsize COMMENTS\\}
ARE\\
SHOWING!\\
\end{com}

\begin{abstract}
We show that certain graphs of groups with cyclic edge groups are aTmenable.
In particular, this holds when each vertex group is either virtually special or acts properly and semisimply on $\hyperbolic^n$.
\end{abstract}

\section{Introduction}

\begin{com}
For uncountable (discrete) groups, the two definitions of a-T-menability are no longer equivalent.

1) The definition by means of existence of a proper isometric action on a Hilbert space, FORCES the group to be countable. Indeed, if $\alpha$ is such an action, then $F_n :=\{g\in G: d(0,\alpha_g(0)<n \}$ is finite (by properness), and the union of all $F_n$'s is $G$ itself.

2) If you take as the definition the existence of a unitary representation almost having invariant vectors and having coefficients vanishing at infinity, then it is true that a group is a-T-menable if and only if all its countable subgroups are.

So, the additive group of the reals, with the discrete topology, satisfies the latter but not the former definition.
Consequently, our main result applies to an uncountable group using definition 2, since definitions 1 and 2 are equivalent for uncountable groups.

We thank Alain Valette for pointing this out.
\end{com}

A group $G$ is \emph{aTmenable} or has the \emph{Haagerup property} if $G$ admits a metrically proper action  on a Hilbert space by affine isometries.
Gromov suggested the term aTmenable to highlight that aTmenability is both a generalization of amenability and a strong negation of Property~$(T)$ \cite{Gromov93}.
Throughout this paper, we will use the following equivalent definition due to Cherix, Martin and Valette: a discrete group $G$ is \emph{aTmenable} if it acts metrically properly on a space with measured walls (see Section~\ref{sec:MeasuredWallspaces}).  For other equivalent definitions of aTmenability, we refer to \cite{CCJJV01,CherixMartinValette04,ChatterjiDrtutuHaglund10}.

If a group $G$ splits as a graph of groups with aTmenable vertex groups and finite edge groups then $G$ is aTmenable \cite[Theorem 6.2.8]{CCJJV01}. In contrast, aTmenability can fail
for a graph of groups whose vertex and edge groups are isomorphic to $\integers^2$. For instance,
 as shown in \cite{CaretteQItoAmenable}, the following $\integers^2$-by-$F_2$ group  is not aTmenable:
 \[G = \integers^2 \rtimes \left\langle \left( \begin{array}[pos]{cc} 1 & 2 \\ 0 & 1 \end{array} \right),
\left( \begin{array}[pos]{cc} 1 & 0 \\ 2 & 1 \end{array} \right) \right\rangle \]
The following problem was raised in \cite[7.3.2]{CCJJV01}.
\begin{prob}
Let $G$ split as a graph of groups.
Suppose each vertex group is aTmenable and each edge group is cyclic.
Is $G$ aTmenable?
\end{prob}

Our main result, Theorem~\ref{thm:main}, solves a special case of this problem.
As its statement is technical we highlight the following attractive consequence:

\begin{thm}\label{thm:typical application}
Let $G$ split as a countable graph of groups where each edge group is infinite cyclic,
and each vertex group either acts properly and semisimply on $\hyperbolic^n$
or is a virtually special group.
Then $G$ is aTmenable.
\end{thm}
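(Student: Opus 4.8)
The plan is to deduce the statement from the paper's (technical) main result, Theorem~\ref{thm:main}. Since that theorem applies to graphs of groups whose vertex groups carry a measured wallspace structure suitably compatible with the edge groups, the whole task reduces to verifying its hypotheses for the two listed families. Concretely, I expect the hypotheses to require that each vertex group $G_v$ act metrically properly on a space with measured walls in such a way that every infinite cyclic edge subgroup $C\le G_v$ acts properly on the restricted wall structure and is \emph{undistorted} for the associated wall pseudometric (equivalently, translates the walls with positive drift). This undistortedness is precisely the feature that fails for the $\integers^2\rtimes F_2$ nonexample, and is what allows the combination along cyclic edges to succeed; so the content of the proof is checking it in each case.

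For a vertex group acting properly and semisimply on $\hyperbolic^n$, I would equip $\hyperbolic^n$ with its standard $\Isom(\hyperbolic^n)$-invariant measured wallspace, whose walls are the totally geodesic hyperplanes carrying the invariant measure. By a Crofton-type formula the measure of the set of hyperplanes separating two points is comparable to their hyperbolic distance, so the induced wall pseudometric is bi-Lipschitz to the hyperbolic metric and the $G_v$-action on this measured wallspace is metrically proper. The semisimplicity hypothesis guarantees that each infinite cyclic edge group is generated by a loxodromic isometry, hence acts by translation along a geodesic axis with positive translation length; consequently $C$ is wall-undistorted, giving the required compatibility.

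For a virtually special vertex group, I would instead use the hyperplane wall structure of the CAT(0) cube complex on which $G_v$ acts properly, obtained via a finite-index special subgroup embedded in a right-angled Artin group and its Salvetti complex. The hyperplanes form a discrete measured wallspace whose wall pseudometric is the combinatorial $\ell^1$ metric. Since a finite-index subgroup of $C$ lies in the right-angled Artin group and every infinite-order element of such a group acts as a hyperbolic (semisimple) isometry of its cube complex, $C$ translates hyperplanes with positive drift and is therefore wall-undistorted, again verifying the hypotheses of Theorem~\ref{thm:main}.

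The main obstacle I anticipate is not the construction of the individual vertex wallspaces but the reconciliation demanded by Theorem~\ref{thm:main} across edges joining vertices of possibly different types: the wall data induced on a common edge group $C\cong\integers$ by the two incident vertex groups may disagree, and may mix the continuous hyperbolic measure with the discrete cube measure. One must rescale and align these structures so that the combination assembles into a single metrically proper action of $G$ on a space with measured walls, which is what certifies aTmenability under the working definition. The countability of the graph of groups enters to carry out this assembly coherently within the measured-walls framework (cf. the opening remark on countable versus uncountable groups).
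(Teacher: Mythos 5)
Your overall strategy---reduce to Theorem~\ref{thm:main} and verify its hypotheses for the two families of vertex groups---is indeed the paper's, but you have misidentified the crucial hypothesis, and the substitute you propose is strictly weaker, so the argument has a gap exactly where the real work lies. Theorem~\ref{thm:main} does not ask that each cyclic edge group be undistorted for the wall pseudometric (``positive drift''); it asks (Condition~\eqref{ax:dispersed}) that $G_e \leqslant G_v$ be \emph{dispersed} relative to $x_v^e$: for every $d>0$ there is a bound on the number of distinct cosets $g_iG_e$ whose orbits $g_iG_ex_v^e$ are pairwise at wall pseudodistance $\leq d$. This is a condition on the entire coset collection, and positive drift does not imply it. Take $Z=\langle (1,1)\rangle \leqslant \integers^2$ acting on the standard cubulation of $\reals^2$: the generator crosses two hyperplanes per step, so $Z$ is wall-undistorted, yet no hyperplane separates the orbit line $\{(t,t)\}$ from the coset line $\{(n+t,t)\}$, since every vertical and horizontal hyperplane crosses both; hence all cosets are pairwise at wall distance $0$ and dispersal fails completely. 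This breaks your treatment of the virtually special case: ``infinite-order elements of a raag act hyperbolically, hence $C$ is wall-undistorted'' verifies the wrong condition, and already fails as a proof of dispersal for a diagonal cyclic subgroup of $\integers^2$, which is a raag. This is precisely why the paper needs Proposition~\ref{prop:abelianDispersal} (re-cubulate a finitely generated abelian group by walls adapted to the given cyclic subgroup, so that distinct cosets become separated) and Proposition~\ref{prop:DispersedRAAGS} (place $Z_i\cap G'$ in a highest abelian subgroup $A_i$ of the raag, use Lemma~\ref{lem:highest cubical torus} and canonical completion and retraction to extend the $A_i$-action on the adapted complex to finite index in $\pi_1 R$, then induce and take products with $\widetilde R$, invoking Lemmas~\ref{lem:dispersalTransitivity}, \ref{lem:dispersalFiniteIndex} and~\ref{lem:IntersectionDispersal}). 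None of this machinery, which is the bulk of Section~3, appears in or can be replaced by your proposal. (Incidentally, the $\integers^2\rtimes F_2$ nonexample has $\integers^2$ edge groups, so ``undistortedness of cyclic edge groups'' is not what fails there either.)

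Two further points. First, in the hyperbolic case dispersal does hold, but it is not a formal consequence of loxodromy: the paper's Lemma~\ref{lem:hyperbolicDispersal} combines bounded packing of cyclic subgroups with the measure estimate~\eqref{eq:distanceBound}, namely that the measure of hyperplanes separating two disjoint axes grows linearly in their hyperbolic distance, which requires the angle/measurability analysis of Lemma~\ref{lem:angle of intersection}; your Crofton-type comparison controls $\#(x,y)$ for points, not $\#(A_1,A_2)$ for cosets of axes, and the latter can a priori be much smaller (as the $\integers^2$ example shows). Second, your choice of the \emph{discrete} (counting-measure) hyperplane wallspace for cubical vertex groups cannot satisfy the matching Conditions~\eqref{ax:funDomain}--\eqref{ax:funDDDomain} and~\eqref{ax:edge inclusion} across an edge joining a cubical vertex to a hyperbolic one: the edge wallspace must map mod-zero isomorphically (up to scaling) into both adjacent vertex wall sets, but the walls crossing an axis form an atomic space ($\integers$ with counting measure) on the discrete cubical side and a non-atomic one (isomorphic to $(\reals,\lambda)$, Example~\ref{exmp:hyperbolic axis standard}) on the hyperbolic side. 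The paper uses the continuous wallspace of Example~\ref{exmp:CAT0CubeComplexes} precisely so that both restrictions are $\integers$-standard. Finally, the cross-edge rescaling you flag as the main obstacle is not something to be fixed by hand: it is absorbed inside Theorem~\ref{thm:main} itself, via the modular homomorphism $G\rightarrow\reals^*$ of Lemma~\ref{lem:modularHomomorphism} together with Lemma~\ref{lem:aTmen by amen}.
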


\section{Measured Wallspaces and aTmenability} \label{sec:MeasuredWallspaces}

\begin{defn}[Measured Wallspace]
Let $X$ be a set.
A \emph{wall} is a partition of $X$ into two disjoint sets.
A wall $\Lambda = H \sqcup H^c$ \emph{s\hspace{0.05em}e\hspace{0.05em}p\hspace{0.05em}a\hspace{0.05em}r\hspace{0.05em}a\hspace{0.05em}t\hspace{0.05em}e\hspace{0.05em}s} $x$ and $y$ if $H \cap \{x,y\} $ is a singleton.
Let $\mathcal{W}$ be a set of walls in $X$.
Let $\mu$ be a measure on a $\sigma$-algebra $\mathcal{B}$ of  subsets of $\mathcal{W}$.
Let $\omega(x,y) \subseteq \mathcal{W}$ be the subset of walls separating $x$ and $y$.
Then $(X, \mathcal{W}, \mathcal{B}, \mu)$ is a \emph{measured wallspace} if $\omega(x,y) \in \mathcal{B}$ for each $x,y \in X$.

The function $x,y \mapsto \mu(\omega(x,y))$ is a pseudo-metric on $X$ called the \emph{wall metric} which we denote by $\#(x,y)$.
For subsets $A, B \subseteq X$ a wall $\Lambda = H \sqcup H^c$ \emph{separates} $A$ and $B$ if either $A \subseteq H$ and $B \subseteq H^c$, or $A \subseteq H^c$ and $B \subseteq H$.
Let $\omega(A,B)$ denote the set of walls separating $A$ and $B$.
If $A, B$ are countable subsets, then $\omega(A,B) \in \mathcal{B}$ since $\omega(A, B) = \bigcap_{a \in A, b \in B} \omega(a,b) \in \mathcal{B}$.
Let $\#(A,B) = \mu(\omega(A,B))$.
%
\end{defn}

\begin{exmp}[$\reals$ as a measured wallspace]
\label{exmp:the Line}
  Let $\mathcal{W}_\reals$ be the collection of walls of $\reals$ where each $W\in \mathcal{W}_\reals$ is of the form
  $(-\infty,r] \sqcup (r,\infty)$ for some $r\in \reals$.
  There is a bijection $\mathcal{W}_\reals\rightarrow \reals$ so we pull back the Lebesgue measure $\lambda$ and Borel algebra $\mathcal{B}_\reals$ to obtain a measured wallspace:
  $(\reals, \mathcal{W}_\reals, \mathcal{B}_\reals, \lambda)$.

 Let $\mathbf{R}$ be the cube complex homeomorphic to $\reals$ with 0-skeleton $\integers$.
  The measured wallspace $(\mathbb{R}, \mathcal{W}_\reals, \mathcal{B}_\reals, \lambda)$ is isomorphic mod-zero (see Definition~\ref{defn:isomorphism}) to the continuous measured wallspace on $(\mathbf{R},\mathcal{W}, \mathcal{B},\mu)$ described in Example~\ref{exmp:CAT0CubeComplexes} and also with the measured wallspace structure on $\hyperbolic^n$ in Examples~\ref{exmp:hyperbolicSpace} with $\reals=\hyperbolic^1$.
\end{exmp}

\begin{exmp}[Cube complexes as measured wallspaces] \label{exmp:CAT0CubeComplexes}
A CAT(0) cube complex $\widetilde{X}$ has two natural measured wallspace structures: the discrete and continuous.
 The walls of the \emph{discrete} wallspace correspond to the hyperplanes in $\widetilde{X}$, and we use the counting measure.
This is a measured wallspace since any two points in $\widetilde{X}$ are separated by a finite number of hyperplanes.

The \emph{continuous} measured wallspace structure will be more appropriate for our purposes.
Each hyperplane $\Lambda$ of $\widetilde{X}$ lies in a convex subcomplex called its \emph{carrier}, consisting of all cubes intersecting $\Lambda$.
The carrier is isometric to $\Lambda \times [-1,1]$ where $\Lambda$ corresponds to $\Lambda \times \{ 0\}$.
 For each hyperplane $\Lambda$, and $\alpha \in (-1,1)$ we let each $\Lambda \times \{ \alpha \}$ be a wall.
 We give the set of walls $\{\Lambda \times \alpha : \alpha \in (-1,1)  \}$ the Lesbesgue measure on $(-1,1)$.
 Note that the measure of the set of walls separating two $0$-cubes is identical in the discrete and continuous wallspaces.
\end{exmp}

\begin{exmp}[$\hyperbolic^n$ as a measured wallspace] \label{exmp:hyperbolicSpace}
We refer to \cite{Robertson98} for the full details of the construction outlined here.
Let $\hyperbolic^n$ denote $n$-dimensional real hyperbolic space.
Let $\mathcal{H}$ denote the set of all totally geodesic codimension-1 hyperplanes.
Let $\Lambda_0 \in \mathcal{H}$.
Every totally geodesic codimension-1 hyperplane in $\hyperbolic^n$ is in the orbit $\Isom(\hyperbolic^n) \Lambda_0$.
Identifying  $\mathcal{H}$ with the set of left cosets $\Isom(\hyperbolic^n) / \stabilizer(\Lambda_0)$,
we can endow $\mathcal{H}$ with the quotient topology.
The Haar measure $\mu$ for $Isom(\hyperbolic^n)$ passes to an invariant measure on the quotient since both $\Isom(\hyperbolic^n)$ and $\stabilizer(\Lambda_0)$ are unimodular.
It is immediate that $\mu$ is defined on the Borel sets $\mathcal{B}_{\hyperbolic}$ of $\mathcal{H}$.
As shown in \cite{Robertson98, CherixMartinValette04}, after scaling $\mu$ we have:
\begin{equation} \label{eq:hyperbolicDistance}
\#(x,y) = \dist_\hyperbolic(x,y)
\end{equation}
Metric properness of a group acting on $(\hyperbolic^n, \mathcal{H}, \mathcal{B}_\hyperbolic, \mu)$ is then equivalent to metric properness of the action on $\mathbb{H}^n$ itself.
\end{exmp}

Amenable groups may have quite different measured wallspace structures from the examples above, and we have not found a way to apply the method of this paper to arbitrary graphs of groups with amenable vertex groups and cyclic edge groups.

\begin{exmp} \label{exmp:amenableGroups}
Let $G$ be a countable amenable group.
Let $\{ K_n \}_{n=1}^{\infty}$ be an increasing, exhaustive sequence of finite subsets of $G$.
One way to define the amenability of $G$ is to require that there is a sequence of \emph{Folner} sets $\{ A_n \}_{n=1}^{\infty}$ such that $A_n$ is a finite subset of $G$, and such that for each $g \in K_n$ we have:
\[
\frac{|g A_n \triangle A_n|}{|A_n|} < 2^{-n}
\]

\noindent For each $n$, the partition $A_n \sqcup A_n^c$ is a wall of $G$ that is assigned a weight $n / |A_n|$.
A measured wallspace is obtained by including all $G$-translates of these partitions.
This is a measured wallspace and $G$ acts metrically properly on it \cite{CherixMartinValette04}.
  \end{exmp}

\begin{rem}
A primary difficulty in generalizing the results of this paper to arbitrary graphs of aTmenable groups with cyclic edge groups arises from the rich diversity of aTmenable structures for $\integers$.
An interesting test case would be an amalgamated free product
$A*_\integers B$ where the aTmenable structure for $\integers$ on the left
and right are highly unrelated.
Other difficulties related to the ``dispersal'' property discussed in Section~\ref{sec:dispersal}, and dealing with appropriate fundamental domains.
\end{rem}

\begin{defn} \begin{com} I have added this definition, that was otherwise absent. \end{com}
 Let $(X_1, \mathcal{W}_1, \mathcal{B}_1, \mu_1)$ and $(X_2, \mathcal{W}_2, \mathcal{B}_2, \mu_2)$ be measured wallspaces, and let $p_i : X_1 \times X_2 \rightarrow X_i$ be the projection.
 Then we can define the \emph{product} of these two wallspaces, as in~\cite{CherixMartinValette04}, with $X = X_1 \times X_2$, $\mathcal{W} = p^{-1}_1(\mathcal{W}_1) \sqcup p^{-1}_2(\mathcal{W}_2)$, $\mathcal{B}$ the $\sigma$-algebra on $\mathcal{W}$ given by $\mathcal{B}_1 \sqcup \mathcal{B}_2$, and $\mu$ the unique measure on $\mathcal{W}$ such that the restiction to $\mathcal{B}_i$ is $\mu_i$.
\end{defn}

The following is proven in \cite{CherixMartinValette04}:
\begin{lem}\label{lem:prop measured wallspace action gives atmenable}
 Suppose $G$ admits a metrically proper action on a measured wallspace.
Then $G$ is aTmenable.
\end{lem}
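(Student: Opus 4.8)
The plan is to build, from the given metrically proper action on the measured wallspace $(X,\mathcal{W},\mathcal{B},\mu)$, a metrically proper action of $G$ on a Hilbert space by affine isometries, which is the original definition of aTmenability. The natural Hilbert space is $L^2$ of the set of \emph{half-spaces}: each wall $\Lambda = H \sqcup H^c$ determines two half-spaces $H$ and $H^c$, so there is a canonical two-to-one projection from the set $\mathcal{H}$ of half-spaces onto $\mathcal{W}$, together with the involution exchanging the two sides of each wall. First I would promote $\mu$ and $\mathcal{B}$ to a $G$-invariant measure $\nu$ on $\mathcal{H}$ whose pushforward to $\mathcal{W}$ is $2\mu$; the relevant measurable sets are generated by the sections $\{\,h \in \mathcal{H} : x \in h\,\}$ over measurable families of walls, and $G$-invariance of $\nu$ follows from $G$ preserving $\mu$ while permuting half-spaces.

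With $\mathcal{K} = L^2(\mathcal{H},\nu)$ in hand, I would define the orthogonal representation $\pi(g)f = f \circ g^{-1}$ and a cocycle as follows. Fix a basepoint $x_0 \in X$, and for $x \in X$ let $A_x = \{\,h \in \mathcal{H} : x \in h\,\}$ be the set of half-spaces containing $x$. Set $\phi(x) = \mathbf{1}_{A_x} - \mathbf{1}_{A_{x_0}}$. Since $\mathbf{1}_{A_x} - \mathbf{1}_{A_{x_0}}$ is supported exactly on the oriented walls separating $x$ from $x_0$, its support has $\nu$-measure $2\,\#(x_0,x) < \infty$, so $\phi(x) \in \mathcal{K}$ with $\|\phi(x)\|^2 = 2\,\#(x_0,x)$. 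Putting $c(g) = \phi(g x_0)$ and using $\pi(g)\mathbf{1}_{A_x} = \mathbf{1}_{A_{gx}}$, a one-line computation gives the cocycle identity $c(gh) = \pi(g)c(h) + c(g)$, so that $\alpha(g)v = \pi(g)v + c(g)$ is an affine isometric action. Because $\|c(g)\|^2 = 2\,\#(g x_0, x_0)$, metric properness of the wallspace action — finiteness of $\{\,g : \#(g x_0, x_0) \le R\,\}$ for every $R$ — translates directly into metric properness of $\alpha$, and Lemma~\ref{lem:prop measured wallspace action gives atmenable} follows.

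The conceptual reason this works is that the wall pseudo-metric is conditionally negative definite: choosing an orientation $\epsilon_\Lambda(x) \in \{0,1\}$ of each wall, one has $\#(x,y) = \int_{\mathcal{W}} (\epsilon_\Lambda(x) - \epsilon_\Lambda(y))^2 \, d\mu(\Lambda)$, exhibiting $\#$ as the pullback of a squared $L^2$-distance. I expect the main obstacle to be purely measure-theoretic: making the measure $\nu$ on $\mathcal{H}$ precise and verifying that $g \mapsto c(g)$ is a genuinely measurable map into $L^2(\mathcal{H},\nu)$. The orientation $\epsilon_\Lambda$ cannot in general be chosen $G$-equivariantly, or even globally measurably, which is precisely why the construction should be carried out on the double cover $\mathcal{H}$ rather than on $\mathcal{W}$ itself; there the assignment $x \mapsto A_x$ requires no choice of orientation and the $G$-action is manifestly measure-preserving. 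Once these measurability points are settled, the standing hypothesis $\omega(x,y) \in \mathcal{B}$ supplies the finite-support property that places each $\phi(x)$ in the Hilbert space, and the remaining verifications are the routine algebra indicated above.
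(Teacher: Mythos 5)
Your proof is correct, but it is worth noting that the paper does not actually prove this lemma --- it simply cites \cite{CherixMartinValette04} --- and your argument is essentially a self-contained reconstruction of the proof in that reference: the $L^2$ space of half-spaces, the cocycle $c(g)=\mathbf{1}_{A_{gx_0}}-\mathbf{1}_{A_{x_0}}$, and the identity $\|c(g)\|^2 = 2\,\#(gx_0,x_0)$ converting metric properness of the wallspace action into metric properness of the affine action. The measure-theoretic points you defer are indeed routine: fixing the basepoint $x_0$ identifies the set of half-spaces with $\mathcal{W}\times\{0,1\}$ (record each half-space by its wall and by whether it contains $x_0$), under which $\nu = \mu\times(\text{counting measure})$ makes every $A_x$ measurable, since $A_x = \bigl((\mathcal{W}\setminus\omega(x_0,x))\times\{1\}\bigr)\cup\bigl(\omega(x_0,x)\times\{0\}\bigr)$, and $G$-invariance of $\nu$ is a direct computation using invariance of $\mu$; moreover, since $G$ is discrete, the measurability of $g\mapsto c(g)$ that you flag as a potential obstacle is vacuous.
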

The following is proven in \cite[Prop~2.5(1)]{Jolissaint00}.
\begin{lem}\label{lem:aTmen by amen}
Suppose $G$ is $N$-by-$Q$ with $N$ aTmenable and $Q$ amenable. Then $G$  is aTmenable.
\end{lem}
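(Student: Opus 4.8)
The plan is to use the characterization of aTmenability by the existence of a proper conditionally negative definite function (equivalently, a proper affine isometric action on a Hilbert space), one of the equivalent formulations recorded in \cite{CCJJV01, CherixMartinValette04}. Write the extension as $1 \to N \to G \xrightarrow{\pi} Q \to 1$. I would build a proper conditionally negative definite function on $G$ as a sum of two pieces: one controlling the quotient direction and one controlling the kernel direction.

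First I would dispose of the quotient direction. Since $Q$ is amenable it is aTmenable, so it carries a proper conditionally negative definite function $\psi_Q$; pulling it back along $\pi$ yields $\Psi_Q := \psi_Q \circ \pi$, which is again conditionally negative definite on $G$ because precomposition with a homomorphism preserves this property. The function $\Psi_Q$ is proper transverse to $N$, in the sense that each of its sublevel sets meets only finitely many cosets of $N$.

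Next I would treat the kernel direction. Fix a proper affine isometric action of $N$ on a Hilbert space $\mathcal{H}$ with orthogonal part $\rho$ and $1$-cocycle $b \colon N \to \mathcal{H}$, so that $\{\, n \in N : \|b_n\| \le R \,\}$ is finite for every $R$. The goal is a conditionally negative definite function $\Phi_N$ on $G$ whose restriction to each coset $gN$ is proper, uniformly in the coset. I would obtain this by inducing the representation $\rho$ from $N$ to $G$ and manufacturing an associated $G$-cocycle on an $L^2$-space modeled on $Q$.

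The main obstacle is precisely this induction step: choosing a set-section $s \colon Q \to G$ and writing $g = \beta(g)\, s(\pi(g))$ with $\beta(g) \in N$, the naive induced cocycle is an infinite sum indexed by $Q$ and need not converge in the Hilbert space when $Q$ is infinite. Here amenability of $Q$ is the essential input: using an increasing sequence of F{\o}lner sets (or an invariant mean) for $Q$ to symmetrize and renormalize the partial sums, and then passing to a weak limit or ultralimit, I expect to extract an honest $G$-cocycle, hence a conditionally negative definite $\Phi_N$, that remains proper along $N$-cosets. Once this is in hand, the sum $\Phi := \Psi_Q + \Phi_N$ is conditionally negative definite and is proper on all of $G$: the summand $\Psi_Q$ confines $\pi(g)$ to a finite subset of $Q$, and $\Phi_N$ then confines $g$ within each of the finitely many relevant cosets. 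A proper conditionally negative definite function yields a proper affine action on a Hilbert space, and hence, through the measured wallspace reformulation underlying Lemma~\ref{lem:prop measured wallspace action gives atmenable}, shows that $G$ is aTmenable.
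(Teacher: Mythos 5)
The paper does not prove this lemma at all: it quotes it from \cite[Prop~2.5(1)]{Jolissaint00} (the result also appears as Proposition~6.1.5 of \cite{CCJJV01}), so your attempt has to stand on its own. Your outer architecture is sound: $\Psi_Q=\psi_Q\circ\pi$ is conditionally negative definite and its sublevel sets meet only finitely many $N$-cosets, so \emph{if} you had a conditionally negative definite $\Phi_N$ on $G$ whose restriction to each $N$-coset is proper, then $\Psi_Q+\Phi_N$ would be proper and $G$ would be aTmenable. The problem is that the entire content of the lemma is concentrated in the step you label the ``main obstacle'' and resolve only with ``I expect'': constructing $\Phi_N$. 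Observe that a conditionally negative definite function on $G$ that is proper on $N$ exists trivially once $G$ is known to be aTmenable, so (given amenability of $Q$) producing $\Phi_N$ is essentially \emph{equivalent} to the theorem; your reduction does not make the problem smaller.

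Moreover, the proposed repair of the induction step --- F{\o}lner renormalization of the divergent induced cocycle followed by a weak limit or ultralimit --- runs into genuine obstructions, not just technical ones. Writing $n_{g,q}=s(q)^{-1}g\,s(\pi(g)^{-1}q)\in N$ and truncating to a F{\o}lner set $A\subseteq Q$, the renormalized translation parts $B_A(g)=|A|^{-1/2}\bigl(b(n_{g,q})\bigr)_{q\in A}$ have cocycle defect supported on $A\,\triangle\,\pi(g)A$, which F{\o}lner does control; the fatal issues are elsewhere. First, the associated functions $\psi_A(g)=|A|^{-1}\sum_{q\in A}\|b(s(q)^{-1}gs(q))\|^2$ (for $g\in N$) need not converge to a \emph{finite} limit, because conjugation by coset representatives can distort $b$ unboundedly: for $G=(\bigoplus_{i\in\integers}F_2)\rtimes\integers$ with $\integers$ shifting, any proper conditionally negative definite function on $N=\bigoplus_{i}F_2$ must blow up along the $\integers$-direction of the support, so for a fixed nontrivial $g\in N$ the averages $\psi_A(g)$ diverge to $+\infty$ as $A$ exhausts $\integers$. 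Second, even when ultralimits exist, weak limits do not preserve properness, so properness of the limiting function on each coset would still require an argument that is absent. This is exactly why the actual proof (Jolissaint's, which the paper invokes) works on the bounded, positive definite side instead: one pulls back finitely supported positive definite functions on $Q$ (built from F{\o}lner sets), and pairs them with matrix coefficients of $\mathrm{Ind}_N^G$ of the GNS representation of a $C_0$ positive definite function on $N$, using vectors supported on a finite F{\o}lner set; all sums are finite, the resulting function is positive definite, close to $1$ on a prescribed finite set, and $C_0$ because it is supported on finitely many $N$-cosets and $C_0$ on each. Bounded functions tolerate F{\o}lner averaging; unbounded proper cocycles do not, and that is the gap in your argument.
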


\section{Dispersed subgroup relative to an action} \label{sec:dispersal}

\begin{defn}[Dispersed]
Let $(X, \mathcal{W}, \mathcal{B}, \mu)$ be a measured wallspace.
Let $\{Y_i\}_{i\in \mathbb{N}}$ be a collection of countable subsets of $X$.
Then  $\{ Y_i \}_{i\in \mathbb{N}}$ is \emph{dispersed} if for each $d>0$ there exists $n>0$ such that there do not exist  $i_1 < \cdots <i_n$, such that
$\#(Y_{i_p}, Y_{i_q})\leq d$ for all $p \neq q$.

Let $G$ act on the measured wallspace $(X,\mathcal W,\mathcal{B},\mu)$.
Let $H$ be a subgroup of $G$, and let $\{g_1, g_2, \ldots \}$ be an enumeration of left-coset representatives of $H$ in $G$.
A subgroup $H \leqslant G$ is \emph{dispersed relative to $x \in X$} if $\{ g_iHx \}_{i\in \mathbb{N}}$ is dispersed.
\end{defn}

\begin{rem} \label{rem:basepointInvariance}
 Dispersal of a subgroup is relative to a choice of basepoint $x \in X$, and it is unclear if dispersal is invariant of the choice.

Let $x, y \in X$ and suppose that $H$ is dispersed relative to $x$.
Let $\{g_i\}_{i\in \naturals}$ be a sequence of elements of $G$ such that $g_i H$ is an enumeration of all the cosets of $H$.
As $H$ is dispersed relative to $x$, we know $\#(Hx, g_iHx) \rightarrow \infty$ as $i \rightarrow \infty$.
As $(X, \mathcal{W}, \mathcal{B}, \mu)$ is a wallspace we know $\#(x,y) =d$ is finite, so we can then deduce that $\#(Hx, Hy) \leq d$ and $\#(g_iHx, g_iHy) \leq d$.
Furthermore, $\#(y, g_iy) \geq \#(x, g_ix) - \#(x,y) - \#(g_ix, g_iy) \geq \#(x, g_ix) -2d$.

The principal difficulty of showing invariance of the basepoint is that the triangle inequality cannot be applied to $\#(Hy, g_i Hy)$.
Indeed, $\omega(Hx, g_iHx)$ can be partitioned as $\Omega_1^i \sqcup \Omega_2^i \sqcup \Omega_3^i$ where:\begin{com}
     CORRECTION - This is confusing because previously $H$ and $H^c$ were used to denote the halfspaces.
	\end{com}
\[ \Omega_1^i = \{ \Lambda \in \omega(Hy, g_iHy) \cap \omega(Hx, g_iHx) \}\]
\[ \Omega_2^i = \{ \Lambda \in \omega(Hx, g_iHx) \mid \textrm{$Hy \cup g_iHy$ is contained in a single halfspace of $\Lambda$} \} \]
\[ \Omega_3^i = \{ \Lambda \in \omega(Hx, g_iHx) \mid \textrm{$\Lambda$ non-trivially partitions either $Hy$ or $g_iHy$} \}\]
To show $H$ is dispersed relative to $y$ it suffices to show that $\mu(\Omega_1^i) \rightarrow \infty$ as $i \rightarrow \infty$. \begin{com} CORRECTION 2 - I reversed the implication. \end{com}
 We know $\mu(\Omega_2^i)$ is bounded for all $i$ since walls in $\Omega_2^i$ either separate $g_iy$ from $g_ix$, or separate $x$ from $y$.
Therefore, showing the dispersal of $H$ relative to $y$ can be viewed as a question of estimating $\mu(\Omega_3^i)$.
An upper bound on $\mu(\Omega_3^i)$ for all $i$, would imply that $\mu(\Omega_1^i)$ diverges to infinity.
Hence dispersal of $H$ relative to $x$ would imply dispersal relative to $y$.

In Proposition~\ref{prop:basepointInvarianceCAT0} we prove that $\Omega_3^i$ is bounded for CAT(0) cube complexes in our case of interest.
We expect this is true for $\hyperbolic^n$ as well.
\end{rem}

For a subcomplex $Y \subseteq X$ of a CAT(0) cube complex, let $\neb_m(\widetilde Y)$ denote the $m$-neighborhood of $\widetilde Y$.
The \emph{convex hull} of a subset $S$ of a CAT(0) cube complex $X$ is the smallest convex subcomplex $\hull(S)$ containing $S$.
We refer to \cite[Lem~13.15]{HaglundWiseSpecial} for the following:

\begin{lem}\label{lem:r thickening}
	Let $Y$ be a convex subcomplex of a finite dimensional CAT(0) cube complex $X$.
	For each $r\geq 0$ there exists a convex subcomplex $ Y^{+r}$ such that $\neb_r( Y)\subset Y^{+r} \subset \neb_s( Y)$
	for some $s = \dim(X) r$.
\end{lem}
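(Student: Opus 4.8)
The plan is to take $Y^{+r} = \hull(\neb_r(Y))$, the convex hull of the $r$-neighborhood of $Y$. This is convex by construction and manifestly contains $\neb_r(Y)$, so the lower inclusion is free and the lower inclusion together with convexity dispose of the existence claim; the entire content is the upper inclusion $\hull(\neb_r(Y)) \subseteq \neb_s(Y)$ with $s = \dim(X)r$. Since hyperplanes, convex hulls, and neighborhoods are all controlled by $0$-cubes, I would reduce to showing that every $0$-cube $v \in \hull(\neb_r(Y))$ satisfies $d(v,Y) \leq \dim(X)r$, where $d$ is the combinatorial distance. Recall that $d(v,Y)$ is realized by the gate of $v$ on the convex subcomplex $Y$ and equals the number of hyperplanes separating $v$ from $Y$. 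Write $\mathcal{H}_v$ for this set of hyperplanes, and for $H \in \mathcal{H}_v$ let $H^{-}$ be the halfspace containing $Y$ and $H^{+}$ the halfspace containing $v$.

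The first key step is to exploit the hull hypothesis to pull points of $\neb_r(Y)$ across each separating hyperplane. For $H \in \mathcal{H}_v$, the halfspace $H^{-}$ is convex, so if it contained all of $\neb_r(Y)$ it would contain $\hull(\neb_r(Y)) \ni v$, contradicting $v \in H^{+}$. Hence there is a point $p_H \in \neb_r(Y) \cap H^{+}$, and choosing $y_H \in Y$ with $d(y_H,p_H) \leq r$, the geodesic $[y_H,p_H]$ crosses $H$.

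The second, and main, step is to organize $\mathcal{H}_v$ so that the factor $\dim(X)$ emerges. I would declare $H \preceq H'$ when $H^{+} \subseteq H'^{+}$. A non-crossing pair of hyperplanes in $\mathcal{H}_v$, both having $v$ on the $+$-side and $Y$ on the $-$-side, must be nested, so incomparable pairs are exactly crossing pairs; antichains are therefore families of pairwise-crossing hyperplanes, of size at most $\dim(X)$. By Dilworth's theorem $\mathcal{H}_v$ is then covered by at most $\dim(X)$ chains. For a single chain $H_1 \prec H_2 \prec \cdots$, its $\preceq$-minimal member $H_1$ yields a point $p_{H_1} \in H_1^{+} \subseteq H_j^{+}$ for every $j$, while $y_{H_1} \in Y \subseteq H_j^{-}$; thus the single geodesic $[y_{H_1},p_{H_1}]$ of length $\leq r$ crosses every hyperplane of the chain, forcing the chain to have at most $r$ hyperplanes. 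Summing over the at most $\dim(X)$ chains gives $d(v,Y) = |\mathcal{H}_v| \leq \dim(X)r$, which is the desired bound.

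The hard part is precisely this second step: the naive observation that each separating hyperplane is crossed by a length-$\leq r$ path out of $Y$ does \emph{not} bound the total number of separating hyperplanes, since distinct hyperplanes could be witnessed by distinct nearby points. The resolution is the crossing-versus-nesting dichotomy, which lets one near point simultaneously witness an entire nested chain, together with Dilworth's theorem bounding the number of chains by the maximal crossing family, i.e.\ by $\dim(X)$. The remaining loose ends — passing between the combinatorial and CAT(0) metrics and lifting the $0$-cube estimate to the full subcomplex $Y^{+r}$ — are routine once this combinatorial core is established.
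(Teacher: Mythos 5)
Your proof is correct, but note that the paper itself does not prove this lemma at all: it simply quotes it from \cite[Lem~13.15]{HaglundWiseSpecial}, so there is no in-paper argument to compare against, and your write-up supplies a genuinely self-contained proof. The skeleton is right: with $Y^{+r}=\hull(\neb_r(Y))$, the whole content is the bound $d(v,Y)\leq \dim(X)\,r$ for every $0$-cube $v$ of the hull. Since the hull is cut out by the (convex, full) combinatorial halfspaces containing $\neb_r(Y)$, each hyperplane $H$ separating $v$ from $Y$ indeed admits a witness $p_H\in \neb_r(Y)$ on the $v$-side, and your crossing-versus-nesting dichotomy is the key step that makes one witness serve a whole chain: two disjoint hyperplanes with $v$ on their $+$-sides and the nonempty $Y$ on their $-$-sides have nested $+$-sides, so incomparability in your order is exactly crossing; pairwise-crossing hyperplanes all meet a common cube (the Helly/Sageev property), so antichains have size at most $\dim(X)$; Dilworth applies because $\mathcal{H}_v$ is finite (its cardinality is $d(v,Y)$); and a geodesic of length at most $r$ from $y_{H_1}$ to $p_{H_1}$ crosses every hyperplane of the chain, each dual to a distinct edge, bounding chain length by $r$. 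Two points you should state explicitly when writing this up, though neither is a gap: the dichotomy uses $Y\neq\emptyset$ (the lemma being trivial otherwise), and the final promotion from the $0$-skeleton estimate to the inclusion of subcomplexes uses that both $\hull(\neb_r(Y))$ and $\neb_s(Y)$ are full subcomplexes, so a cube lies in $\neb_s(Y)$ as soon as all its corners do.
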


We refer to the convex subcomplex $Y^{+r}$ as the \emph{$r$-thickening} of $Y$.

\begin{prop} \label{prop:basepointInvarianceCAT0}
 Suppose $G$ acts properly and cocompactly on a CAT(0) cube complex $X$.
 Let $H \leqslant G$ be a subgroup that acts cocompactly on a nonempty convex subcomplex $Y \subseteq X$.
 If $H$ is dispersed relative to a $0$-cube $x \in X$, then $H$ is dispersed relative to any $0$-cube $x'\in X$.
\end{prop}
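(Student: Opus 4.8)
The plan is to follow the decomposition of $\omega(Hx, g_iHx)$ into $\Omega_1^i \sqcup \Omega_2^i \sqcup \Omega_3^i$ set up in Remark~\ref{rem:basepointInvariance}, and to show that $\mu(\Omega_3^i)$ is bounded independently of $i$. Since the remark already explains that a uniform bound on $\mu(\Omega_3^i)$, together with the boundedness of $\mu(\Omega_2^i)$, forces $\mu(\Omega_1^i) \to \infty$, and since $\Omega_1^i \subseteq \omega(Hy, g_iHy)$, this will give $\#(Hy, g_iHy) \to \infty$ and hence dispersal relative to $y$. By symmetry (swapping the roles of $x$ and $x'$) it suffices to treat a single pair $x, x'$ of $0$-cubes, so I reduce immediately to producing the uniform bound on $\mu(\Omega_3^i)$.

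The key idea is that a wall in $\Omega_3^i$ nontrivially partitions $Hy$ or $g_iHy$, and this can only happen for walls whose underlying hyperplane comes close to the convex cocompact set $Y = \hull(Hx)$ (or its $g_i$-translate). First I would replace the orbit $Hx$ by the convex subcomplex $Y$ on which $H$ acts cocompactly, noting that since $x, x'$ are a bounded distance apart, the relevant halfspace-separation behavior of a hyperplane $\Lambda$ with respect to $Hy$ is controlled by whether $\Lambda$ separates points of $Y$ at uniformly bounded distance from $Hx$. Concretely, a wall that nontrivially partitions $Hy$ must be crossed by some geodesic joining two points of $Hy$, each of which lies within $\dist(x,y)$ of a point of $Hx \subseteq Y$; hence the hyperplane carrying such a wall must intersect the $r$-neighborhood $\neb_r(Y)$ for $r = \dist(x,x')$. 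Using Lemma~\ref{lem:r thickening} I would enlarge $Y$ to the convex $r$-thickening $Y^{+r}$, on which $H$ still acts cocompactly, so that every hyperplane contributing to $\Omega_3^i$ meets this fixed convex subcomplex up to translation.

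The crucial step is then a counting/measure estimate: because $H$ acts cocompactly on $Y^{+r}$ and $G$ acts properly and cocompactly on $X$, the number of hyperplanes of $X$ meeting $Y^{+r}$ and separating two points both within distance $r$ of $Y$ is finite, and in the continuous wallspace the corresponding measure is finite — call it $M$. The measure $\mu(\Omega_3^i)$ is bounded by the contribution from hyperplanes near $Y$ plus those near $g_iY$; each contributes at most $M$ by $G$-invariance of $\mu$, giving $\mu(\Omega_3^i) \le 2M$ uniformly in $i$. I would assemble this carefully using that walls in the continuous structure come in $\Lambda \times (-1,1)$ families of total measure $2$, so finiteness of the hyperplane count translates directly to finiteness of $\mu$.

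The main obstacle I anticipate is making precise the claim that a wall nontrivially partitioning $Hy$ is accounted for by a hyperplane meeting a fixed-size neighborhood of $Y$ \emph{uniformly in $i$}; the subtlety is that $Hy$ is an infinite orbit spread across all of $Y$, so one must argue locally — a hyperplane partitioning $Hy$ separates some pair $hy, h'y$, and since $hy$ is within $r$ of $hx \in Hx$, the hyperplane separates points of $\neb_r(Hx)$ and so by convexity and the thickening lemma crosses $Y^{+r}$ in a region of bounded diameter relative to the $H$-action. Controlling the total measure then rests on cocompactness of $H$ on $Y^{+r}$ to reduce to finitely many $H$-orbits of such hyperplanes, exactly as in the hyperplane-counting arguments for convex cocompact subcomplexes; the care lies in ensuring the bound does not secretly depend on which coset $g_iH$ we are looking at, which is where $G$-invariance of $\mu$ and cocompactness of the $G$-action are essential.
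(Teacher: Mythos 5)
Your overall strategy matches the paper's: reduce via Remark~\ref{rem:basepointInvariance} to a uniform bound on $\mu(\Omega_3^i)$, pass from the orbit $Hx$ to the convex hull $Y$ and from $Hx'$ to $Y'=\hull(Hx')\subset Y^{+m}$ using Lemma~\ref{lem:r thickening}. But your crucial counting step contains a false claim, and this is a genuine gap. You assert that ``the number of hyperplanes of $X$ meeting $Y^{+r}$ and separating two points both within distance $r$ of $Y$ is finite.'' This set is infinite whenever $H$ is infinite: every hyperplane crossing $Y$ separates two $0$-cubes of $Y$ itself (points at distance $0$ from $Y$), and an unbounded convex subcomplex is crossed by infinitely many hyperplanes. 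Concretely, take $X=\reals^2$ with its standard cubulation, $H=\integers$ acting by translation along the $x$-axis, and $Y$ the $x$-axis: every vertical hyperplane meets $Y^{+r}$ and separates points of $Y$, so your constant $M$ is infinite and the bound $\mu(\Omega_3^i)\leq 2M$ is vacuous. The root cause is that your estimate never uses the defining property of $\Omega_3^i$ that its walls also \emph{separate} $Hx$ from $g_iHx$ (equivalently, separate $Y$ from $g_iY$); bounding only ``walls whose hyperplane comes close to $Y$ or to $g_iY$'' cannot succeed, no matter how the cocompactness and $G$-invariance are invoked.

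The paper's proof supplies exactly the missing mechanism. It chooses a shortest geodesic $\sigma$ from $g_iY$ to the convex subcomplex $Y^{+m}$, ending at a point $v\in Y^{+m}$, and then a geodesic $\sigma'$ of uniformly bounded length from $v$ to a point $u\in Hx$. Any wall separating $Y$ from $g_iY$ must be dual to an edge of the concatenation $\sigma\sigma'$; but a hyperplane dual to the shortest geodesic $\sigma$ cannot cross the convex set $Y^{+m}$, hence cannot cross $Y'$. Therefore every wall of $\Omega_3^i$ that crosses $Y'$ is dual to $\sigma'$, and there are at most $|\sigma'|$ such walls, a bound independent of $i$ (the case of walls crossing $g_iY'$ is symmetric). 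This interplay between ``separates $Y$ from $g_iY$'' and ``crosses $Y'$'' is precisely the step your argument is missing, and without it the proof does not go through.
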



\begin{proof}
Without loss of generality, we may assume that $Y = \hull(Hx)$.
Indeed, we may assume that $x \in Y$ be replacing $Y$ with $Y^{+\ell}$ for some $\ell$, and thereafter that $Y = \hull(Hx)$ by replacing $Y$ with the convex hull of $Hx$ in $Y$.

Let $Y' = \hull(Hx')$.
The hyperplanes intersecting $\hull(Hx')$ are precisely the hyperplanes that non-trivially partition $Hx'$.
The hyperplanes that separate $Hx$ and $gHx$ are precisely the hyperplanes that separate $Y$ and $gY$, or equivalently the hyperplanes on a shortest geodesic between $Y$ and $gY$.
Following Remark~\ref{rem:basepointInvariance}, it suffices to bound $\Omega_3$; that is to bound the number of hyperplanes separating $Y, gY$, but intersecting $Y'$ or $gY'$.
By symmetry it suffices to bound those intersecting $Y'$.

Observe that $Y'\subset Y^{+m}$ for some $m$.
Choose a shortest geodesic $\sigma$ between $gY$ and $Y^{+m}$,
and orient $\sigma$ so that it ends at a point $v \in  Y^{+m}$.
Let $u\in Hx$ be a point with $v\in \neb_m(u)$, and let $\sigma'$ be a geodesic from $v$ to $u$.
Observe that any hyperplane $\Lambda$ separating $Y,gY$ and intersecting $Y'$ must intersect $\sigma'$.
Indeed, if $\Lambda$ separates $Y,gY$ then $\Lambda$ must be crossed by the concatenation $\sigma\sigma'$.
However, each hyperplane crossed by $\sigma$ is disjoint from $Y^{+m}$ and hence from $Y'$.
Thus $\Lambda$ must be crossed by $\sigma'$, and so the number of such hyperplanes is bounded by $|\sigma'|\leq m$.
\end{proof}

\begin{com}
	REMOVED: Therefore, when talking about an action on a CAT(0) cube complex $X$, we omit any assertion of which basepoint the dispersal is relative to and simply say that a subgroup is dispersed relative to the action on $X$.
\end{com}

%

\begin{lem} \label{lem:dispersalTransitivity}
 Let $G_3 \leqslant G_2 \leqslant G_1$, and let $(X,\mathcal{W},\mathcal{B}, \mu)$ be a measured wallspace with a $G_1$ action.
 If $G_2 \leqslant G_1$ is dispersed relative to $x \in X$, and $G_3 \leqslant G_2$ is dispersed relative to $x \in X$ then $G_3 \leqslant G_1$ is dispersed relative to $x \in X$.

 Conversely, if $G_3 \leqslant G_1$ is dispersed relative to $x\in X$, then $G_3 \leqslant G_2$ is dispersed relative to $x \in X$.
\end{lem}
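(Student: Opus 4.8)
The plan is to exploit the multiplicative structure of coset representatives together with two elementary properties of the wall pseudo-metric $\#$: \emph{monotonicity}, namely $\#(A,B)\le\#(A',B')$ whenever $A'\subseteq A$ and $B'\subseteq B$ (since any wall separating $A$ from $B$ also separates the smaller sets, so $\omega(A,B)\subseteq\omega(A',B')$), and \emph{$G_1$-invariance}, namely $\#(gA,gB)=\#(A,B)$ for $g\in G_1$, as $G_1$ acts by automorphisms preserving $\mu$. Writing $G_1=\bigsqcup_i a_iG_2$ and $G_2=\bigsqcup_j b_jG_3$, every coset of $G_3$ in $G_1$ has the form $a_ib_jG_3$ with $a_i$ ranging over coset representatives of $G_2$ in $G_1$ and $b_j$ over those of $G_3$ in $G_2$, and distinct pairs $(i,j)$ give distinct cosets.

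For the forward direction I would argue by a boundedness estimate. Fix $d>0$; let $n_2$ be the integer furnished by dispersal of $G_2$ in $G_1$ relative to $x$ at scale $d$, and $n_3$ that furnished by dispersal of $G_3$ in $G_2$. Suppose we are given cosets $a_{i_p}b_{j_p}G_3$, for $p=1,\dots,N$, that are pairwise within distance $d$, i.e. $\#(a_{i_p}b_{j_p}G_3x,\,a_{i_q}b_{j_q}G_3x)\le d$ for $p\ne q$. Since $a_{i_p}b_{j_p}G_3x\subseteq a_{i_p}G_2x$, monotonicity gives $\#(a_{i_p}G_2x,\,a_{i_q}G_2x)\le d$; hence at most $n_2-1$ distinct values of the index $i$ can occur among the $i_p$. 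By pigeonhole some single value $i^\ast$ is attained on a subset $S$ of the indices with $|S|\ge N/(n_2-1)$, and on $S$ the cosets $b_{j_p}G_3$ are pairwise distinct (distinct cosets $a_{i^\ast}b_{j_p}G_3$ sharing $a_{i^\ast}$ force distinct $b_{j_p}G_3$). Applying $a_{i^\ast}^{-1}$ and using $G_1$-invariance, $\#(b_{j_p}G_3x,\,b_{j_q}G_3x)=\#(a_{i^\ast}b_{j_p}G_3x,\,a_{i^\ast}b_{j_q}G_3x)\le d$ for $p,q\in S$. By dispersal of $G_3$ in $G_2$ this forces $|S|\le n_3-1$, so $N\le(n_2-1)(n_3-1)$. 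Thus $n=(n_2-1)(n_3-1)+1$ witnesses dispersal of $G_3$ in $G_1$ at scale $d$.

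For the converse I would simply observe that $\{b_jG_3\}$ is a subfamily of the cosets $\{c_kG_3\}$ of $G_3$ in $G_1$: every coset of $G_3$ in $G_2$ is also a coset of $G_3$ in $G_1$, and distinct ones remain distinct. Dispersal is inherited by subfamilies, since if arbitrarily many $b_jG_3x$ were pairwise within some fixed distance $d$, the very same subsets would violate dispersal of $G_3$ in $G_1$. Hence $G_3\le G_2$ is dispersed relative to $x$.

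The only real content lies in the forward direction, and within it the point to get right is the bookkeeping of the pigeonhole step: one must verify that the cosets sharing the index $i^\ast$ correspond to genuinely distinct cosets $b_jG_3$ in $G_2$ (so the $G_3$-in-$G_2$ bound applies), and that invariance and monotonicity are applied to honestly countable, hence measurable, sets. Neither is difficult, but the argument works only because both hypotheses are taken relative to the \emph{same} basepoint $x$, which is what lets the single translation $a_{i^\ast}^{-1}$ simultaneously normalize all the relevant cosets.
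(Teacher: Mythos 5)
Your proof is correct, and its core dichotomy is the same one the paper uses: cosets of $G_3$ in $G_1$ that are mutually close either concentrate in a single coset of $G_2$ (contradicting dispersal of $G_3\leqslant G_2$ after translating by one element, which is exactly where the common basepoint $x$ is needed) or spread over many cosets of $G_2$ (contradicting dispersal of $G_2\leqslant G_1$, via the monotonicity $\omega(a_{i_p}G_2x,a_{i_q}G_2x)\subseteq\omega(a_{i_p}b_{j_p}G_3x,a_{i_q}b_{j_q}G_3x)$, which the paper uses only implicitly). The execution differs: the paper argues by contradiction, asserting that non-dispersal of $G_3\leqslant G_1$ yields an infinite sequence of distinct cosets $g_iG_3$ with $\#(g_iG_3x,G_3x)<D$, and then splits into the two cases on this infinite family; you instead give a direct finitary count, showing any pairwise-$d$-close family of distinct $G_3$-cosets in $G_1$ has size at most $(n_2-1)(n_3-1)$, so $n=(n_2-1)(n_3-1)+1$ witnesses dispersal at scale $d$. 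Your quantitative version is actually the more careful one, on two points the paper glosses over. First, the literal negation of dispersal produces arbitrarily large finite cliques, not an infinite family, and after translating a clique to contain the base coset one retains only closeness to $G_3x$ rather than pairwise closeness; since $\#$ on subsets (unlike on points) satisfies no triangle inequality, pairwise closeness cannot simply be recovered from closeness to a common set. Second, in the paper's case of infinitely many $g_iG_3$ inside one $G_2$-coset, deriving a contradiction with dispersal of $G_3\leqslant G_2$ requires exactly the translate-and-keep-pairwise-distances bookkeeping that your pigeonhole on $i^\ast$ makes explicit. Your converse is the same one-line observation as the paper's: the cosets of $G_3$ in $G_2$ form a subfamily of its cosets in $G_1$, and dispersal passes to subfamilies.
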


 \begin{proof}
  In the first case, suppose that $G_3$ is not a dispersed subgroup of $G_1$ relative to $x$.
  Then there exists a sequence $\{ g_i \}_{i\in \naturals}$ such that $g_i G_3 \neq g_j G_3$ for $i \neq j$, and $\#(g_i G_3 x , G_3 x) < D$ for all $i \in \naturals$.\begin{com}
  	{CORRECTION: Added the $\#$.}
  \end{com}
  If there are infinitely many $g_i G_3$ contained in a single left coset of $G_2$, then this contradicts the dispersal of $G_3$ in $G_2$ relative to $x$.
  Otherwise, there must be infinitely many $g_iG_3$ contained in infinitely many distinct $G_2$ cosets, which contradicts the dispersal of $G_2$ in $G_1$ relative to $x$.

  The converse case follows from the observation that the $G_2$ cosets of $G_3$ are a subset of the $G_1$ cosets of $G_3$.
 \end{proof}

%

\begin{lem} \label{lem:dispersalFiniteIndex}
	Let $G$ act on a finite dimensional CAT(0) cube complex $X$.
	Let $H \leq G$.
	Let $G' \leq G$ be a finite index subgroup, and let $H' = (G' \cap H)$.
	Suppose that $H' \leq G'$ is dispersed with respect to $x \in X$.
	Then $H \leq G$ is dispersed relative to $x \in X$.
\end{lem}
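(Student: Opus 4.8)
The plan is to factor the passage from $H' \leqslant G'$ to $H \leqslant G$ through the intermediate subgroup $H' \leqslant G$, proving first that $H'$ is dispersed in $G$ and then that the dispersal of $H'$ in $G$ forces the dispersal of the larger group $H$. The first step is soft: since $[G:G']<\infty$, the family of cosets $\{gG'x : gG' \in G/G'\}$ is finite and hence vacuously dispersed (for any $d$, choosing $n$ larger than the number of cosets, there are not $n$ distinct indices at all). Applying Lemma~\ref{lem:dispersalTransitivity} to $H' \leqslant G' \leqslant G$ then upgrades the hypothesis that $H' \leqslant G'$ is dispersed to the statement that $H' \leqslant G$ is dispersed relative to $x$.

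The substance is the second step, which I would prove by contraposition. Assume $H \leqslant G$ is \emph{not} dispersed relative to $x$. Unwinding the definition yields a constant $d>0$ such that for every $n$ there are $n$ distinct cosets $g_1H,\dots,g_nH$ with $\#(g_pHx,g_qHx)\le d$ for all $p\ne q$. Since $H' \subseteq H$, the cosets $g_1H',\dots,g_nH'$ are again distinct, and I will show that $\#(g_pH'x,g_qH'x)\le d+C$ for a constant $C$ depending only on $\dim X$, the index $[H:H']$, and $x$. This produces arbitrarily large $(d+C)$-clusters of $H'$-cosets in $G$, contradicting the dispersal of $H' \leqslant G$ from the first step. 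Because $g_pH'x \subseteq g_pHx$, every wall separating the larger sets also separates the smaller ones, so $\#(g_pH'x,g_qH'x)=\#(g_pHx,g_qHx)+\mu(\text{extra})$, where the \emph{extra} walls separate $g_pH'x$ from $g_qH'x$ but not $g_pHx$ from $g_qHx$. Exactly as in Remark~\ref{rem:basepointInvariance}, each extra wall must cross $g_pHx$ or $g_qHx$, so by symmetry it suffices to bound the hyperplanes separating $Y_0$ from $gY_0$ that cross $Y$, where $Y_0=\hull(H'x)$, $Y=\hull(Hx)$, and $g=g_p^{-1}g_q$.

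The key geometric input, and the main obstacle, is this uniform bound in the absence of properness or cocompactness. First I would record the coarse containment $Hx\subseteq\neb_m(H'x)$: choosing \emph{right}-coset representatives $s_1,\dots,s_t$ of $H'$ in $H$, every $g\in H$ is $g=h's_k$, so $\dist(h's_kx,h'x)=\dist(s_kx,x)\le m$ with $m=\max_k \dist(s_kx,x)$. By Lemma~\ref{lem:r thickening} this gives $Y\subseteq Y_0^{+m}\subseteq\neb_s(Y_0)$ with $s=\dim(X)\,m$, and both $Y_0,Y$ are convex. Now I adapt the bridge argument of Proposition~\ref{prop:basepointInvarianceCAT0}: take a shortest geodesic $\sigma$ from $gY_0$ to $Y$, ending at $v\in Y$, and a geodesic $\sigma'$ from $v$ to its nearest point $u\in Y_0$, so that $|\sigma'|=\dist(v,Y_0)\le s$. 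Any hyperplane separating $Y_0$ from $gY_0$ is crossed by the concatenation $\sigma\sigma'$ (which joins $gY_0$ to $u\in Y_0$); but the hyperplanes crossed by the bridge $\sigma$ are each disjoint from $Y$, so any one crossing $Y$ must be crossed by $\sigma'$, of which there are at most $|\sigma'|\le s$. Converting from this hyperplane count to the continuous measure multiplies by a bounded factor, giving the uniform constant $C$. The subtlety to handle with care is precisely that, unlike in Proposition~\ref{prop:basepointInvarianceCAT0}, the orbit $H'x$ need not be coarsely dense in its hull $Y_0$; the argument is arranged to avoid this, using only the convex containment $Y\subseteq\neb_s(Y_0)$ and terminating $\sigma'$ on the hull $Y_0$ rather than on the orbit.
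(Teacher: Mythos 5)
Your proof is correct and takes essentially the same approach as the paper: both first use Lemma~\ref{lem:dispersalTransitivity} (with $G'\leqslant G$ vacuously dispersed by finite index) to upgrade the hypothesis to dispersal of $H'\leqslant G$, and both then establish the same quantitative comparison $\#(g_pHx,g_qHx)\geq \#(g_pH'x,g_qH'x)-2s$ with $s=\dim(X)\,r$, via the coarse containment $Hx\subseteq \neb_r(H'x)$, convex hulls, and Lemma~\ref{lem:r thickening}. The only difference is presentational: the paper lower-bounds the surviving walls by running a geodesic between the thickened hulls $(Y_a)^{+r}$ and $(Y_b)^{+r}$, whereas you argue by contraposition and upper-bound the lost walls using the bridge argument of Proposition~\ref{prop:basepointInvarianceCAT0} --- two equivalent ways of accounting for the same $2s$ defect.
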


\begin{proof}
	The claim follows by showing that for all $k >0$ there exists $K>0$ such that if $g_aH'$ and $g_bH'$ are distinct left cosets, then $\#(g_aH'x,g_bH'x)>K$ implies that $\#(g_aHx, g_bHx)>k$.
	Indeed, first observe by Lemma~\ref{lem:dispersalTransitivity} that if $H' \leqslant G'$ is dispersed relative to some $0$-cube, then so is $H' \leqslant G$.
	Now if $g_1 H , \ldots, g_m H$ are distinct cosets with $m = m(K)$, then so are $g_1 H', \ldots, g_m H'$, so there exists $\#(g_aH'x, g_bH'x) > K$, and hence $\#(g_aHx, g_bHx)>k$.
	
	Suppose that $\#(g_aH'x, g_bH'x) = k$, for $k$ sufficiently large.
	Let $Y_a = \hull(g_aH'x)$, and let $Y_b = \hull(g_bH'x)$.
	Note that $Y_a$ and $Y_b$ are separated by $k$ walls.
	
	There exists $r = r(H, H',x)$ such that $Hx \subseteq \neb_r(H'x)$, and therefore $Hx \subseteq (Y_a)^{+r}$.
	By Lemma~\ref{lem:r thickening} we observe that $(Y_a)^{+r} \subseteq \neb_s(Y_a)$ for $s =\dim(X)r$.	
	We thus find that the number of walls separating $g_aHx$ and $g_bHx$ is at least $k-2s$.
	Indeed, if $\sigma$ is a combinatorial geodesic joining $(Y_a)^{+r}$ and $(Y_b)^{+r}$, then it has length at least $k-2s$ and the walls intersecting $\sigma$ separate the convex subcomplexes $(Y_a)^{+r}$ and $(Y_b)^{+r}$.
	
	In conclusion, given $K>0$, by letting $k = K+2s = K + 2r\dim(X)$ we ensure  $\#(g_aH'x,g_bH'x)>K$ implies $\#(g_aHx, g_bHx)>k$.
	\end{proof}

 Although Lemma~\ref{lem:dispersalFiniteIndex} only applies in the setting when the measured wallspace is a finite dimensional CAT(0) cube complex, the following converse applies to general measured wallspaces.

\begin{lem} \label{lem:IntersectionDispersal}
   Let $G$ act on a measured wallspace $(X, \mathcal{W}, \mathcal{B}, \mu)$.
   Suppose that $H \leqslant G$ is dispersed relative to $x \in X$.
   Let $G' \leq G$, and $H' = H \cap G'$.
   Then $H' \leq G'$ is dispersed relative to $x \in X$.	
\end{lem}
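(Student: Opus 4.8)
The plan is to show directly that $H' = H \cap G'$ is dispersed relative to $x$ by comparing cosets of $H'$ in $G'$ with cosets of $H$ in $G$. The key observation is that distinct left cosets $g H'$ with $g \in G'$ map to distinct left cosets $g H$ in $G$; indeed, if $g_a H = g_b H$ for $g_a, g_b \in G'$, then $g_b^{-1} g_a \in H$, and since $g_a, g_b \in G'$ we also have $g_b^{-1} g_a \in G'$, hence $g_b^{-1} g_a \in H \cap G' = H'$, so $g_a H' = g_b H'$. Thus the map $g H' \mapsto g H$ from cosets of $H'$ in $G'$ to cosets of $H$ in $G$ is injective.

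Given this injectivity, I would argue by contradiction. Suppose $H' \leqslant G'$ is \emph{not} dispersed relative to $x$. Then by definition there is some $d > 0$ such that for every $n$ there exist distinct cosets $g_{i_1} H', \ldots, g_{i_n} H'$ (with each $g_{i_j} \in G'$) satisfying $\#(g_{i_p} H' x, g_{i_q} H' x) \leq d$ for all $p \neq q$. The goal is to convert each such configuration into a configuration witnessing the failure of dispersal for $H$ in $G$.

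The main step is to control the wall distance between the larger cosets $g_{i_p} H x$ and $g_{i_q} H x$ in terms of the distance between the smaller cosets $g_{i_p} H' x$ and $g_{i_q} H' x$. Since $H' \subseteq H$, we have the inclusion $g H' x \subseteq g H x$ for each $g$, and therefore $\omega(g_{i_p} H x, g_{i_q} H x) \subseteq \omega(g_{i_p} H' x, g_{i_q} H' x)$: any wall separating the larger sets also separates the smaller subsets they contain. This yields the monotonicity inequality $\#(g_{i_p} H x, g_{i_q} H x) \leq \#(g_{i_p} H' x, g_{i_q} H' x) \leq d$. Combined with the injectivity above—which guarantees the cosets $g_{i_1} H, \ldots, g_{i_n} H$ are distinct—this produces, for every $n$, a collection of $n$ distinct cosets of $H$ pairwise within wall distance $d$, contradicting the dispersal of $H \leqslant G$ relative to $x$.

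I expect the whole argument to be short, so there is no single hard obstacle; the one point requiring care is purely bookkeeping, namely ensuring that the enumeration $\{g_i\}$ of $H'$-cosets restricted to representatives lying in $G'$ yields genuine left-coset representatives of $H'$ in $G'$, and that the injectivity into $H$-cosets is used correctly so that the contradicting configuration for $H$ really consists of \emph{distinct} cosets. Unlike Lemma~\ref{lem:dispersalFiniteIndex}, no cube-complex geometry or thickening is needed here: the monotonicity $\#(A',B') \leq \#(A,B)$ for $A' \subseteq A$, $B' \subseteq B$ holds in any measured wallspace, which is exactly why this converse applies in full generality.
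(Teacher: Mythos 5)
Your proposal is correct and follows essentially the same route as the paper's proof: argue by contradiction, note that distinct $H'$-cosets of elements of $G'$ yield distinct $H$-cosets (via $g_b^{-1}g_a \in H \cap G' = H'$), and transfer the bounded-distance configurations. The only difference is that you make explicit the monotonicity $\#(g_pHx, g_qHx) \leq \#(g_pH'x, g_qH'x)$ coming from $gH'x \subseteq gHx$, a step the paper leaves implicit when it concludes with the bound on the $H'$-orbits.
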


\begin{proof}
	Suppose that $H' \leq G'$ is not dispersed relative to $x \in X$.	
	Then there exists $D > 0$ and arbitrarily large collections of $g_1, \ldots, g_m \in G'$ such that $g_iH' \neq g_jH'$ and $\#(g_iH'x, g_jH'x) <D$ for $1 \leq i < j \leq m$.
	Then $g_i H \neq g_j H$ for $i \neq j$, otherwise $g^{-1}_jg_i \in  G' \cap H =H'$, which would imply  that $g_iH'x = g_jH'x$.
	Thus there are arbitrarily large collections of left cosets $g_1 H, \ldots, g_m H$ such that $\#(g_iH'x, g_jH'x) <D$ for $1 \leq i < j \leq m$.
\end{proof}

\begin{prop} \label{prop:abelianDispersal}
 Let $G$ be a finitely generated abelian group, and let $H$ be a subgroup of $G$.
 Then $G$ acts on a CAT(0) cube complex $X$ such that $H$ is dispersed relative to any $0$-cube $x$ in $X$.
\end{prop}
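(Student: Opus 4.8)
The plan is to cubulate not $H$ or $G$ directly, but the free part of the quotient $G/H$. Since $G$ is finitely generated abelian, so is $G/H$, and the structure theorem gives $G/H \cong \integers^m \times T'$ with $T'$ finite. Let $\phi\colon G \to \integers^m$ be the quotient map $G \to G/H$ followed by projection onto the free factor; then $K := \ker\phi$ contains $H$, and $K/H \cong T'$ is finite. Passing to $\phi$ is precisely what circumvents the fact that $H$ need not be a direct summand of $G$, so that $H$ cannot in general be realized as a coordinate subgroup (consider $2\integers \leq \integers$, or the diagonal $\langle(1,1)\rangle \leq \integers^2$), whereas the free quotient $\integers^m$ is always available.

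I would then let $G$ act on $X = \reals^m$, the standard CAT(0) cube complex with $0$-skeleton $\integers^m$ (Example~\ref{exmp:CAT0CubeComplexes}), through $\phi$ and the translation action of $\integers^m$. This action is neither faithful nor proper, as all of $K$ acts trivially, but neither property is required. For any $0$-cube $x \in \integers^m$, since $H$ acts trivially each orbit $gHx$ is the single $0$-cube $\phi(g)+x$, and hence for any two coset representatives $g,g'$,
\[ \#(gHx, g'Hx) = \|\phi(g) - \phi(g')\|_1, \]
the number of hyperplanes of $\reals^m$ separating the lattice points $\phi(g)+x$ and $\phi(g')+x$. This quantity is independent of $x$, so dispersal relative to one $0$-cube is dispersal relative to all of them; in particular Proposition~\ref{prop:basepointInvarianceCAT0}, which would not apply in any case since our action is not proper and cocompact, is unnecessary here.

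Dispersal then reduces to a lattice-point count. Each point of $\integers^m$ equals $\phi(g)$ for exactly $|K/H| = |T'|$ distinct cosets $gH$, since two representatives share an image under $\phi$ precisely when they differ by an element of $K$, and $\phi^{-1}(v)$ is a single $K$-coset consisting of $|K/H|$ cosets of $H$. Given $d > 0$, let $N(d)$ be the finite number of lattice points in the closed $\ell^1$-ball of radius $d$. If cosets $g_1 H, \dots, g_n H$ are pairwise within wall-distance $d$, then $\phi(g_1), \dots, \phi(g_n)$ all lie in the $\ell^1$-ball of radius $d$ about $\phi(g_1)$, so $n \leq N(d)\,|T'|$; taking $n = N(d)\,|T'| + 1$ exhibits the dispersal of $H$.

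I expect the genuine content to lie entirely in the first step, namely recognizing that the correct object to cubulate is the free quotient $\integers^m$ rather than any embedding of $H$ as a subcomplex; once $\phi$ is in hand the geometry collapses to the elementary finiteness of lattice points in bounded sets. As a sanity check on the degenerate extreme: when $m = 0$ (equivalently $H$ has finite index) the space $X$ is a point and there are only $|T'|$ cosets in total, so the collection is finite and dispersal holds trivially, consistent with the displayed bound.
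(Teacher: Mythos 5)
Your proof is correct, and in the end it produces essentially the same space the paper does --- the standard cubulation $\mathbf{R}^{n-k}$ of Euclidean space, with $G$ acting through a quotient and $H$ inside the kernel --- but it gets there by a genuinely different route. The paper never invokes the structure theorem for $G/H$. Instead it fixes a proper $G$-action on $\reals^n$, chooses $g_{k+1},\ldots,g_n$ so that $\langle H,g_{k+1},\ldots,g_n\rangle$ is commensurable with $G$, reduces to the case $G=\langle H,g_{k+1},\ldots,g_n\rangle$ using Lemma~\ref{lem:dispersalTransitivity} together with the fact that finite-index subgroups are automatically dispersed, and then cubulates by wallspace duality: the walls are the $G$-translates of the hyperplanes $\Lambda_i$ stabilized by the corank-one subgroups omitting $g_i$, the dual cube complex is $\mathbf{R}^{n-k}$, each $0$-cube stabilizer is precisely $H$, and dispersal is identified with properness of the induced action. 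The commensurability reduction is exactly the paper's substitute for your torsion factor: shrinking $G$ to $\langle H,g_{k+1},\ldots,g_n\rangle$ forces the quotient by $H$ to be free abelian, whereas you keep $G$ whole, tolerate the finite kernel $K/H\cong T'$, and absorb the index $|T'|$ into the lattice-point count $N(d)\,|T'|+1$. Your route needs neither the reduction lemmas nor the dual construction, handles torsion in $G$ transparently, and makes quantitative the paper's terse closing claim that dispersal ``is equivalent to the properness of $\mathbf{R}^{n-k}$''; in particular it deals explicitly with distinct cosets $g_iH\neq g_jH$ satisfying $\phi(g_i)=\phi(g_j)$, which are at wall-distance zero yet occur with multiplicity at most $|T'|$, a point the paper leaves implicit. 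What the paper's route buys is that the construction stays inside the wallspace-duality framework used throughout the paper, and that the $0$-cube stabilizers equal $H$ on the nose rather than the possibly larger $K$; neither feature is used by Proposition~\ref{prop:DispersedRAAGS}, the only consumer of this result, so your argument is a valid drop-in replacement. One nitpick: your parenthetical example $\langle(1,1)\rangle\leq\integers^2$ is in fact a direct summand (complemented by $\langle(0,1)\rangle$), so of your two examples only $2\integers\leq\integers$ illustrates that $H$ need not be a summand; this does not affect the argument.
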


\begin{proof}
 Let $G$ be a rank $n$ abelian group, and $H$ a rank $k$ subgroup.
 Fix a proper action of $G$ on $\mathbb{R}^n$.
 Choose $g_{k+1}, \ldots, g_n \in G$ such that $\langle H, g_{k+1}, \ldots, g_n \rangle$ is commensurable with $G$.
 By Lemma~\ref{lem:dispersalTransitivity}, and the fact that finite index subgroups are always dispersed relative to any action, we can assume that $G = \langle H, g_{k+1}, \ldots, g_n \rangle$.

 Let $\Lambda_i \subseteq \mathbb{R}^n$ be the hyperplane stabilized by $\langle H, g_{k+1}, \ldots g_{i-1}, g_{i+1}, \ldots, g_n \rangle$, for $k_1 \leq i \leq n$.
 As each $\Lambda_i$ is a codimension-1 hyperplane, $\Lambda_i$ defines a wall in $\mathbb{R}^n$.
 Taking the union $\mathcal{W}$ of all $\Lambda_i$ and their $G$-translates, we obtain a wallspace $(\mathbb{R}, \mathcal{W})$.
 Let $X$ be the dual cube complex $C(\mathbb{R}^n, \mathcal{W})$.
 Observe that $C(\mathbb{R}^n, \mathcal{W}) = \mathbf{R}^{n-k}$, there is only one $0$-cube orbit, and that the stabilizer in $G$ of any $0$-cube is precisely $H$.
 Therefore, the dispersal of $H$ is equivalent to the properness of $\mathbf{R}^{n-k}$.
 \end{proof}

\begin{lem} \label{lem:cocompactDispersal}
Let $G$ act properly and cocompactly on a CAT(0) cube complex $X$.
Let $H$ be a subgroup that acts properly and cocompactly on a convex subcomplex $Y$.
Then $H$ is dispersed relative to  any $0$-cube $y \in Y$. \begin{com}CORRECTION -Reinserted `relative to the basepoint'.\end{com}
\end{lem}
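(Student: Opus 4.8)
The plan is to reduce the statement to a counting argument governed by properness, after using a Helly-type property to concentrate the relevant cosets near a single $0$-cube. By Proposition~\ref{prop:basepointInvarianceCAT0} it suffices to establish dispersal relative to a single $0$-cube $y_0\in Y$, since that proposition upgrades dispersal relative to one $0$-cube to dispersal relative to every $0$-cube of $X$. I would first normalize by replacing $Y$ with $\hull(Hy_0)$; this is an $H$-invariant convex subcomplex of $Y$ on which $H$ still acts cocompactly, so there is a constant $c$ with $Y\subseteq\neb_c(Hy_0)$. Since any halfspace containing $Hy_0$ contains its convex hull, a hyperplane separates $g_pHy_0$ from $g_qHy_0$ if and only if it separates the convex subcomplexes $g_pY$ and $g_qY$; hence $\#(g_pHy_0,g_qHy_0)$ agrees, up to the fixed factor relating the continuous measure to the counting measure on hyperplanes, with the number of hyperplanes separating $g_pY$ and $g_qY$, which equals $\dist(g_pY,g_qY)$. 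As the continuous wall metric $\#$ and this counting metric are bi-Lipschitz on pairs of sets of $0$-cubes, it is enough to verify the dispersal condition using $\dist(g_pY,g_qY)$.

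Suppose this fails: there is $d>0$ and, for every $m$, distinct cosets $g_1H,\dots,g_mH$ with $\dist(g_pY,g_qY)\le d$ for all $p\neq q$. For each pair the gate realizing this distance produces a $0$-cube of $g_qY$ lying in $\neb_d(g_pY)\subseteq (g_pY)^{+d}$, so the convex subcomplexes $(g_1Y)^{+d},\dots,(g_mY)^{+d}$ pairwise intersect. The key step — and the main obstacle — is now to invoke the \emph{Helly property} for convex subcomplexes of a CAT(0) cube complex: a finite family of pairwise-intersecting convex subcomplexes has a common $0$-cube $w$. Granting this, Lemma~\ref{lem:r thickening} gives $(g_pY)^{+d}\subseteq\neb_s(g_pY)$ with $s=\dim(X)\,d$, so $\dist(w,g_pY)\le s$ for every $p$. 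Combining with $g_pY\subseteq\neb_c(g_pHy_0)$, for each $p$ I choose $\gamma_p\in g_pH$ with $\dist(w,\gamma_py_0)\le s+c$.

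Finally I would exploit properness. All the points $\gamma_py_0$ lie in the ball of radius $s+c$ about $w$, so $\dist(y_0,\gamma_p^{-1}\gamma_qy_0)=\dist(\gamma_py_0,\gamma_qy_0)\le 2(s+c)$ for all $p,q$. Because $G$ acts properly and cocompactly, $X$ is locally finite and the set $F=\{g\in G:\dist(y_0,gy_0)\le 2(s+c)\}$ is finite. Each $\gamma_p^{-1}\gamma_q$ lies in $F$, so fixing $p=1$ forces every $\gamma_q$ into the finite set $\gamma_1F$; as $\gamma_qH=g_qH$, there are at most $|F|$ distinct cosets among $g_1H,\dots,g_mH$, whence $m\le|F|$, contradicting that $m$ was arbitrary. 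Setting $n=|F|+1$, which depends only on $d$, $\dim(X)$ and $c$, establishes dispersal relative to $y_0$, and Proposition~\ref{prop:basepointInvarianceCAT0} then yields dispersal relative to every $0$-cube of $X$, in particular every $y\in Y$. The one point requiring care beyond routine coarse geometry is the Helly step, which is precisely what replaces pairwise-closeness of the translates $g_pY$ by genuine closeness to a single vertex $w$; without it one would be tempted to push representatives near $Y$ by left multiplication, which scrambles the left cosets and breaks the count.
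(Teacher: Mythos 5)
Your proof is correct, but it diverges from the paper's at the key finiteness step. The two arguments share the same convexity reduction: since halfspaces are convex, a hyperplane separates $g_pHy_0$ from $g_qHy_0$ exactly when it separates $g_p\hull(Hy_0)$ from $g_q\hull(Hy_0)$, so the wall pseudo-metric between coset orbits equals (up to the fixed discrete/continuous normalization) $\dist_X(g_pY,g_qY)$. At that point the paper simply invokes the bounded packing of $H$ in $G$ from \cite[Lem~2.4]{WiseWoodhouse15} and transfers it from the word metric of $G$ to distances between the translates $g_iY$ using properness and cocompactness of the actions; you instead reprove that packing statement directly: pairwise $d$-closeness of the $g_pY$ yields pairwise intersection of the convex thickenings $(g_pY)^{+d}$ of Lemma~\ref{lem:r thickening}, the Helly property for convex subcomplexes of a CAT(0) cube complex (Gerasimov's lemma, a standard fact and a citation of essentially the same standing as the paper's appeal to bounded packing) supplies a common $0$-cube $w$, and properness of the $G$-action then caps the number of distinct cosets by $|F|$, where $F=\{g \in G : \dist(y_0,gy_0)\le 2(\dim(X)d+c)\}$. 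The paper's route buys brevity by outsourcing the finiteness to an existing lemma; yours buys a self-contained argument with an explicit packing bound $n=|F|+1$ depending only on $d$, $\dim X$, and the cocompactness constant $c$ --- in effect you have reproved the cited bounded-packing lemma in this special case. One small economy: your opening appeal to Proposition~\ref{prop:basepointInvarianceCAT0} is harmless but unnecessary, since your argument applies verbatim to an arbitrary $0$-cube $y_0\in Y$, which is exactly what the lemma asserts.
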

\begin{proof}
By \cite[Lem~2.4]{WiseWoodhouse15} the subgroup $H$ has bounded packing in $G$.
Let $G \slash H = \{ g_1H, g_2H, \ldots \}$ be the collection of cosets of $H$.
\emph{Bounded packing} of $H$ in $G$ means that for all $d > 0$ there exists $n>0$ such that there do not exist $i_1 < \cdots < i_n$ with $\dist_G(g_{i_p}H, g_{i_q}H) \leq d$ for all $p \neq q$.
Since $G$ acts properly and cocompactly on $X$, and $H$ acts cocompactly on $Y$, we deduce the analogous statement that: for all $d > 0$ there exists $n>0$ such that there do not exist $i_1 < \cdots < i_n$ with $\dist_X(g_{i_p}Y, g_{i_q}Y) \leq d$ for all $p \neq q$.

Suppose that $\dist_X(g_1 Y, g_2Y) > 0$.
Let $\gamma$ be a combinatorial path in $X$ with endpoints on $g_1Y, g_2Y$, such that $|\gamma| = \dist_X(g_1Y, g_2Y)$
As $Y$ is convex, a hyperplane dual to $\gamma$ cannot intersect $g_1Y$ or $g_2Y$.
Therefore, $\#(g_1Y, g_2Y) = \dist_X(g_1Y, g_2Y)$.

Thus, the bounded packing of $H$ in $G$ implies the dispersion of $\{ g_i Y \}_{i\in \naturals}$, and therefore the dispersal of $H$ relative to any $0$-cube $y \in Y$.
\end{proof}

\begin{cor}
Let $G$ be a hyperbolic group that acts properly and cocompactly on a CAT(0) cube complex $X$ with either the continuous or discrete measured wallspace structure.
Let $H$ be a quasiconvex subgroup.
Then $H$ is dispersed relative to any $0$-cube $x$ in $X$. \begin{com} CORRECTION: added relative to any $0$-cube.
	\end{com}
\end{cor}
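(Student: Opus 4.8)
The plan is to realize $H$ as acting properly and cocompactly on a convex subcomplex of $X$, apply Lemma~\ref{lem:cocompactDispersal}, and then use Proposition~\ref{prop:basepointInvarianceCAT0} to handle basepoints lying outside that subcomplex.

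First I would fix a $0$-cube $x$ and set $Y = \hull(Hx)$. Since $G$ acts properly and cocompactly on $X$, the Milnor--\v{S}varc lemma gives that the orbit map $g \mapsto gx$ is a quasi-isometry from $G$ (with the word metric) to the $0$-skeleton of $X$ equipped with the combinatorial metric; in particular $X$ is $\delta$-hyperbolic because $G$ is. Quasiconvexity of $H$ in $G$ then transports to quasiconvexity of the orbit $Hx$ as a subset of $X$. In a hyperbolic space the combinatorial convex hull of a quasiconvex set lies within a bounded Hausdorff neighborhood of that set, so $Y \subseteq \neb_m(Hx)$ for some $m$. Consequently $H$ acts cocompactly on $Y$, and since $H \leqslant G$ acts properly on $X$ it also acts properly on $Y$.

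With $Y$ in hand, Lemma~\ref{lem:cocompactDispersal} applies directly and shows that $H$ is dispersed relative to every $0$-cube of $Y$. To obtain dispersal relative to an arbitrary $0$-cube $x'$ of $X$, I would invoke Proposition~\ref{prop:basepointInvarianceCAT0}, whose hypotheses are exactly that $G$ acts properly and cocompactly on $X$ and that $H$ acts cocompactly on the nonempty convex subcomplex $Y$ while being dispersed relative to some $0$-cube. This promotes dispersal relative to a point of $Y$ to dispersal relative to any $0$-cube, yielding the stated conclusion.

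It remains to reconcile the two wallspace structures. Dispersal is defined through the wall metric $\#$ evaluated on translates of a single $0$-cube orbit, and, as noted in Example~\ref{exmp:CAT0CubeComplexes}, the discrete and continuous structures assign the same measure to the set of walls separating any two $0$-cubes; hence they assign the same value to $\#(g_iHx, g_jHx)$ and the two notions of dispersal coincide. So it suffices to argue in the discrete structure, which is the setting in which Lemma~\ref{lem:cocompactDispersal} is proved. The main obstacle is the middle step: establishing that $\hull(Hx)$ stays within a bounded neighborhood of $Hx$, equivalently that $H$ acts cocompactly on the hull. This genuinely uses the hyperbolicity of $X$ --- convex hulls of quasiconvex sets can fail to be cobounded in a general CAT(0) cube complex --- so both the quasiconvexity hypothesis on $H$ and the hyperbolicity hypothesis on $G$ are essential here.
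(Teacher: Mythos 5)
Your proposal is correct and is essentially the paper's own proof: the paper likewise produces an $H$-cocompact convex subcomplex $Y \subset X$ and then applies Lemma~\ref{lem:cocompactDispersal} followed by Proposition~\ref{prop:basepointInvarianceCAT0}. The only caveat is that your key middle step --- that $\hull(Hx)$ stays within a bounded neighborhood of the quasiconvex orbit $Hx$ when $X$ is hyperbolic --- should not be asserted as a known remark; it is precisely the nontrivial theorem of \cite{Haglund2006,SageevWiseCores} that the paper cites to obtain $Y$, so cite it at that point rather than re-deriving it.
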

\begin{proof} By \cite{Haglund2006,SageevWiseCores}, there exists an $H$-cocompact convex subcomplex $Y\subset X$.
We may thus apply Lemma~\ref{lem:cocompactDispersal} and Proposition~\ref{prop:basepointInvarianceCAT0}.
\end{proof}

\begin{lem} \label{lem:angle of intersection}
	Let $\gamma \subseteq \mathbb{H}^n$ be a biinfinite geodesic, and let $\theta \in (0,\pi/2)$.
	The set of hyperplanes intersecting $\gamma$ at angle $<\theta$ is measurable.
\end{lem}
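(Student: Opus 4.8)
The plan is to work in the hyperboloid model $\hyperbolic^n = \{x\in\reals^{n,1} : \langle x,x\rangle = -1,\ x_0>0\}$, where $\langle\cdot,\cdot\rangle$ has signature $(n,1)$, and to parametrize $\mathcal H$ concretely. Each totally geodesic codimension-$1$ hyperplane has the form $\Lambda_v = v^\perp\cap\hyperbolic^n$ for a spacelike unit vector $v$ (so $\langle v,v\rangle = 1$), with $\Lambda_v = \Lambda_{-v}$; the set of such $v$ is de Sitter space, on which $\Isom(\hyperbolic^n)$ acts transitively, and the resulting map onto $\mathcal H$ is exactly the quotient $\Isom(\hyperbolic^n)/\stabilizer(\Lambda_0)$ used to define $\mathcal B_\hyperbolic$. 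Consequently a subset of $\mathcal H$ lies in $\mathcal B_\hyperbolic$ as soon as its preimage among unit spacelike vectors is Borel; in particular any set that is open in this parametrization is measurable. So it suffices to exhibit the set in question as an open (or Borel) condition on $v$.

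Next I would record the geometry of the intersection. Write $\gamma = P\cap\hyperbolic^n$, where $P\subset\reals^{n,1}$ is the timelike $2$-plane of signature $(1,1)$ spanned by a point and a unit tangent of $\gamma$, and let $\pi_P$ denote the Lorentzian orthogonal projection onto $P$. The key computation I expect to carry out is that $\Lambda_v$ meets $\gamma$ transversally precisely when $\pi_P(v)$ is spacelike, that is when $F(v) := \langle\pi_P v,\pi_P v\rangle > 0$, and that in this case the angle of intersection $\phi\in(0,\pi/2]$ satisfies
\[ \sin^2\phi = \langle\pi_P v,\pi_P v\rangle = F(v). \]
The derivation is short: at the intersection point $x$ the Riemannian normal to $\Lambda_v$ inside $T_x\hyperbolic^n$ is $v$ itself, so $\sin\phi = |\langle v,\gamma'\rangle|$ for the unit tangent $\gamma'$ at $x$; since $\gamma'\in P$ and $x\perp v$, the orthogonality relations inside the Lorentzian plane $P$ force $\pi_P(v)$ to be parallel to $\gamma'$, whence $\langle\pi_P v,\pi_P v\rangle = \langle v,\gamma'\rangle^2 = \sin^2\phi$. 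The degenerate values of $F$ correspond to the excluded configurations: $F(v)<0$ to disjoint hyperplanes, $F(v)=0$ with $\pi_P v\neq 0$ to hyperplanes asymptotic to $\gamma$, and $F(v)=0$ with $\pi_P v = 0$ (that is $v\in P^\perp$) to hyperplanes containing $\gamma$.

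Finally I would conclude by continuity. The function $F(v)=\langle\pi_P v,\pi_P v\rangle$ is a polynomial, hence continuous, in $v$, and it is even, so it descends to a continuous function $\bar F$ on $\mathcal H$. Since $\theta<\pi/2$ and $\sin$ is increasing on $(0,\pi/2)$, the condition ``$\Lambda$ meets $\gamma$ at angle $<\theta$'' is equivalent to $0<\bar F<\sin^2\theta$. Thus the set in the statement is $\bar F^{-1}\big((0,\sin^2\theta)\big)$, which is open, hence Borel, hence $\mu$-measurable. (If one prefers to count the hyperplanes containing $\gamma$ as ``angle $0$'', one adjoins the set $\{v\in P^\perp\}$, which is a lower-dimensional and therefore $\mu$-null Borel subset, leaving the conclusion unchanged.)

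The step I expect to be the main obstacle is not the continuity, which is immediate once $F$ is in hand, but rather the two bookkeeping points: verifying that the concrete de Sitter parametrization carries the same Borel structure as the quotient topology defining $\mathcal B_\hyperbolic$, and pinning down the angle formula together with the correct dictionary between the sign of $F$ and the three qualitatively different positions of $\Lambda_v$ relative to $\gamma$.
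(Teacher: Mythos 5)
Your proposal is correct, but it takes a genuinely different route from the paper's. The paper works intrinsically with the measured-wallspace axioms: the only sets known measurable a priori are the separators $\omega(x,y)$, so it manufactures the desired set as a countable union $B_\theta^\infty=\bigcup_{i,j}\omega(\gamma(-\infty),a_i)\cap\omega(x_i,x_{ij})$, where the $a_i$ run over a countable dense subset of $\gamma$, $x_i=\gamma(t_i+1)$, and the $x_{ij}$ are countable dense subsets of the fibers $p_\gamma^{-1}(x_i)$ at distance at most $\tanh^{-1}(\tan(\theta)\sinh(1))$; right-triangle hyperbolic trigonometry certifies that each such set contains only hyperplanes crossing $\gamma$ at angle $<\theta$, and a density/limiting argument gives the reverse inclusion. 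You instead exploit the concrete Lorentzian model: after identifying $\mathcal{H}$ Borel-compatibly with de Sitter space modulo $\pm$, the angle condition becomes the polynomial inequality $0<\bar F<\sin^2\theta$, so the set is open, hence Borel. Your key computation is right: at an intersection point $x$ with unit tangent $\gamma'$, the relation $\langle v,x\rangle=0$ forces $\pi_P v=\langle v,\gamma'\rangle\gamma'$, so $F(v)=\langle v,\gamma'\rangle^2=\sin^2\phi$, and your sign dictionary for $F<0$, $F=0$, $F>0$ is the correct signature analysis of $v^\perp\cap P$. What each approach buys: yours gives the stronger conclusion that the set is actually open and is shorter once the parametrization is in place, but it leans on the homogeneous-space bookkeeping you flag --- that the continuous equivariant bijection $\Isom(\hyperbolic^n)/\stabilizer(\Lambda_0)\rightarrow(\text{de Sitter})/\pm$ is a homeomorphism, which does hold by the open-mapping theorem for transitive actions of second-countable locally compact groups. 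The paper's argument never needs to know anything about the topology of $\mathcal{H}$ beyond the wallspace axiom that each $\omega(x,y)$ is measurable, so it is more robust in settings where no smooth model of the set of walls is available; the price is the more delicate elementary geometry and the density argument.
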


Let $[a,b]$ denote the set of points in the geodesic in $\mathbb{H}^n$ joining $a$ and $b$.

\begin{proof}
	Observe that the set of all walls intersecting $\gamma$ is measurable since it is equal to the ascending countable union of measurable sets $\omega(\gamma(-n), \gamma(n))$.
	Enumerate the dense, countable set $\{a_i\}_{i\in \mathbb{N}} \subseteq \gamma(\mathbb{Q})$.
	Let $a_i = \gamma(t_i)$.
	Let $\omega(\gamma(-\infty), a_i)$ denote the measurable set $\bigcup_{n < t_i}^{\infty} \omega(\gamma(n), a_i)$.
	Let $p_\gamma : \mathbb{H}^n \rightarrow \mathbb{H}^n$ be the projection map onto $\gamma$.
	
	Let $x_i = \gamma(t_i+1)$.
	Enumerate a countable subset of points $\{x_{ij}\}_{j\in \mathbb{N}} \subseteq p_{\gamma}^{-1}(x_i)$ that is dense in $p_{\gamma}^{-1}(x_i)$, and such that $d_{\mathcal{H}}(x_{ij}, x_i) \leq \tanh^{-1}(\tan(\theta) \sinh(1))$.
	The points $a_i,x_i,x_{ij}$ form a right angle triangle in $\mathbb{H}^n$ containing the unit segment $[a_i,x_i] \subseteq \gamma$, and hypotenuse $[a_i,x_{ij}]$ that meets $\gamma$ at an angle less than $\theta$.
	If $\Lambda \in \omega(\gamma(-\infty), a_i) \cap \omega(x_i,x_{ij})$, there is right angled triangle with vertices $\Lambda \cap \gamma, x_i,$ and $[x_i, x_{ij}] \cap \Lambda$.
	Let $\zeta$ be the angle at $\Lambda \cap \gamma$.
	Since the side adjacent to $\zeta$ contains $[a_i,x_i]$, and the side opposite $\zeta$ is contained in $[x_i, x_{ij}]$, we can deduce that $\zeta < \theta$.
    Therefore, $\omega(\gamma(-\infty), a_i) \cap \omega(x_i,x_{ij})$ only contains walls that intersect $[\gamma(-\infty), a_i]$ at an angle of less than $\theta$.
    Let

    \[ B_{\theta}^{\infty} = \bigcup_{i,j \in \mathbb{N}}\omega(\gamma(-\infty), a_i) \cap \omega(x_i,x_{ij}). \]

    \noindent Since $B_{\theta}^{\infty}$ is the countable union of measurable sets, $B_{\theta}^{\infty}$ is  a measurable set.

    Let $\Lambda$ be a hyperplane intersecting $\gamma$ at an angle less than $\theta$.
    Let $\alpha = \Lambda \cap \gamma$.
    Since $\{a_i\}_{i\in \mathbb{N}}$ is dense in $\gamma$ there exists a subsequence $\{ a_{i_k} \}_{k\in \mathbb{N}} \subseteq [\gamma(-\infty), \alpha]$ converging to $\alpha$.
    Note that for $k$ sufficiently large, $\Lambda$ will non-trivially intersect $p^{-1}(x_{i_k})$.
    Since $\{x_{i_kj}\}_{j\in\mathbb{N}}$ are also dense in $p_{\gamma}^{-1}(x_{i_k})$, for $k$ sufficiently large, there exist points $a_{i_k}$ and $x_{i_kj}$ such that $\Lambda \in \omega(\gamma(-\infty), a_{i_k}) \cap \omega(x_{i_kj}, p_{\gamma}(x_{i_kj}))$.
    Therefore $B_{\theta}^{\infty}$ is precisely the set of all hyperplanes intersecting $\gamma$ at an angle of less than $\theta$.
\end{proof}

\begin{lem} \label{lem:hyperbolicDispersal}
Let $G$ act discretely on $\hyperbolic^n$.
Consider the associated action on $(\hyperbolic^n, \mathcal{H}, \mathbb{B}_\hyperbolic, \mu)$.
A cyclic subgroup $H$ of $G$ is dispersed relative to a point in the axis $A$ of $H$.
\end{lem}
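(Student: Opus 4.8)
The plan is to reduce the wall-measure between two orbits to the measure of hyperplanes separating the two axes, and then to show that this measure must diverge as the cosets escape to infinity. Write $H=\langle h\rangle$ with $h$ loxodromic and axis $A$ (so that the phrase ``axis of $H$'' makes sense; a finite $H$ is dispersed trivially), and fix $x\in A$. For a coset representative $g$, the orbit $gHx$ lies on $gA$ and accumulates precisely at the two endpoints of $gA$ in $\partial\hyperbolic^n$. Hence any hyperplane separating $Hx$ from $gHx$ must place all of $Hx$, and therefore both endpoints of $A$, on one side, and both endpoints of $gA$ on the other. Since a geodesic whose two endpoints lie on one side of a totally geodesic hyperplane is disjoint from that hyperplane, such a separating hyperplane is disjoint from $A\cup gA$ and separates the two axes; conversely any hyperplane separating $A$ from $gA$ separates the orbits. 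Thus, up to a measure-zero family of hyperplanes meeting the axes tangentially or passing through an endpoint, $\omega(Hx,gHx)=\omega(A,gA)$, so that $\#(Hx,gHx)=\mu(\omega(A,gA))$. Equivalently, $\#(Hx,gHx)$ is the measure of those $\Lambda$ whose boundary $\partial\Lambda\subseteq\partial\hyperbolic^n$ separates the endpoint pair of $A$ from that of $gA$.

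The heart of the argument is a lower bound forcing $\#(Hx,g_iHx)\to\infty$ as the coset $g_iH$ leaves every finite subset of $G/H$. I would obtain this by comparison with the basepoint pseudo-metric. By~\eqref{eq:hyperbolicDistance} we have $\#(x,gx)=\dist_{\hyperbolic}(x,gx)$, and, since $x\in Hx$, the set $\omega(Hx,gHx)$ is obtained from $\omega(x,gx)$ by deleting those walls that separate $x$ from $gx$ but meet $A$ or $gA$; call this deleted family the \emph{defect}. Writing $\#(Hx,gHx)=\dist_{\hyperbolic}(x,gx)-\mu(\mathrm{defect})$, it suffices to control the defect. This is exactly where Lemma~\ref{lem:angle of intersection} is meant to be used: each defect wall meets the biinfinite geodesic $A$ (or $gA$), and I would argue that, after discarding a subfamily of bounded measure, the remaining defect walls meet $A$ at a small angle, so that Lemma~\ref{lem:angle of intersection} guarantees measurability and bounds their total measure by a definite fraction of $\dist_{\hyperbolic}(x,gx)$ together with a constant. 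The target estimate is of the form $\#(Hx,gHx)\geq c\,\dist_{\hyperbolic}(x,g^{*}x)-C$, where $g^{*}$ is the representative of $gH$ minimizing $\dist_{\hyperbolic}(x,\cdot)$ and $c,C$ are independent of $g$.

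Granting such a lower bound, dispersal follows from discreteness. Suppose $H$ were not dispersed relative to $x$: there would be $d>0$ and arbitrarily large families of distinct cosets $g_1H,\dots,g_mH$ with $\#(g_iHx,g_jHx)\leq d$ for all $i\neq j$. Applying the lower bound to each pair and choosing representatives realizing the minimizing points, the elements $g_i$ can be taken so that $g_i x$ lies in a common ball $B$ of radius depending only on $d$; since $G$ acts discretely on $\hyperbolic^n$ the set $\{g\in G: gx\in B\}$ is finite, bounding $m$ and giving the required contradiction. Proposition~\ref{prop:basepointInvarianceCAT0} is the cube-complex analogue of the basepoint step, and Lemma~\ref{lem:cocompactDispersal} is the model for the final packing argument.

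I expect the measure lower bound of the second paragraph to be the main obstacle. The delicate case is that of two asymptotic axes (for instance when $G$ contains parabolics, so that $A$ and $gA$ share an endpoint): there the nearest points of the two orbits may tend to coincide even while $\#(Hx,gHx)$ diverges, so any bound phrased through the minimal distance between the orbits is false and would point the wrong way. Controlling the defect genuinely requires the angular information of Lemma~\ref{lem:angle of intersection}, and verifying uniformity of the angle threshold in $n$ dimensions is the technical crux.
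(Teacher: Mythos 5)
Your first two paragraphs essentially reconstruct the paper's own argument. The paper likewise reduces dispersal of $H$ to dispersal of the family of axes $\{g_iA\}$, and its key estimate is exactly your target bound, in the form $\#(A_1,A_2)\geq \tfrac12\lfloor \dist_\hyperbolic(A_1,A_2)-2d_0\rfloor$ (Equation~\eqref{eq:distanceBound}), proved by the two ingredients you name: a hyperbolic-triangle computation ($\sinh d_0=\cot\theta_0$) showing that a wall crossing the connecting geodesic at angle $\geq\theta_0$, at distance $\geq d_0$ from an axis, cannot meet that axis; and Lemma~\ref{lem:angle of intersection} together with Equation~\eqref{eq:hyperbolicDistance} to show that the walls crossing each unit subsegment at angle $\geq\theta_0$ have measure at least $\tfrac12$. (Your closing worry about asymptotic axes is moot and in fact backwards: in a group acting discretely, two loxodromic elements cannot share exactly one fixed point, so distinct conjugate axes are never asymptotic; and if they were, every wall separating the two orbits would have to contain the common ideal endpoint in its boundary, a null family, so $\#(Hx,gHx)$ would be essentially $0$ rather than divergent.)

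The genuine gap is in your third paragraph, precisely where you depart from the paper. The paper closes the argument by invoking the fact that $H$ has bounded packing in $G$: for each $d$ there is $m$ such that among any $m$ distinct cosets, some two axes satisfy $\dist_\hyperbolic(g_pA,g_qA)>d$; combined with \eqref{eq:distanceBound} this yields dispersal. You instead try to derive this packing statement from discreteness alone, via the assertion that pairwise-close cosets ``can be taken so that $g_ix$ lies in a common ball of radius depending only on $d$.'' That assertion does not follow from choosing representatives realizing pairwise minima: the near-encounter of $g_iHx$ with $g_jHx$ and that of $g_iHx$ with $g_kHx$ may occur at points arbitrarily far apart along $g_iA$, so no single representative of $g_iH$ witnesses both; what pairwise closeness gives directly is only that each orbit meets the $D$-neighborhood $\neb_D(g_jA)$ of every other axis, and such neighborhoods are unbounded. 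To place one point of each orbit in a common ball whose radius depends only on $d$ --- and, crucially, not on the number $m$ of cosets --- you need a coarse Helly/thin-triangle argument for the pairwise-intersecting convex sets $\neb_D(g_iA)\subset\hyperbolic^n$, which is in effect a proof of bounded packing. That missing argument (or a citation replacing it) is exactly the one substantive input the paper uses beyond the geometry you already reproduced.
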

\begin{proof}

Let $\{ g_i H \}_{i\in \naturals}$ be the left-cosets of $H$.
To show that $H$ is dispersed relative to $a \in A$, it suffices to show that $\{g_i A\}_{i\in \naturals}$ is dispersed.
As $H$ has bounded packing in $G$, for each $d$ there exists $m$ such that in any cardinality $m$ subcollection $Q \subseteq \{g_i A \}_{i\in \naturals}$ there exists $g_p A, g_q A \in Q$ with $\dist_\hyperbolic(g_p A , g_q A) > d$.
It therefore suffices to show that there exists $d_0$ so that for biinfinite geodesics $A_1, A_2$ we have \begin{com}
	CORRECTION: Inserted \emph{so that}
\end{com}
\begin{equation} \label{eq:distanceBound}
\#(A_1, A_2) \geq \frac 1 2  \lfloor \dist_\hyperbolic(A_1, A_2 )  - 2d_0 \rfloor.
\end{equation}
Let $P$ be a geodesic between $A_1$ and $A_2$, realizing $\dist_{\hyperbolic}(A_1, A_2)$.
We will first show that for any wall $\Lambda$ cutting $P$, that does not separate $A_1, A_2$ either $\angle(\Lambda, P)$ is small or $\Lambda$ intersects $P$ close to $A_1$ or $A_2$.
We will secondly show that the measure of each of these subsets is a controlled part of the measure of the walls intersecting $P$.

 We first consider how the angle at which $\Lambda$ intersects $P$ relates to the distance of $\Lambda \cap P$ from $A_1$ and $A_2$. \begin{com}
 	{CORRECTION: I rewrote this line.}
 \end{com}
 Note that $P$ meets each $A_i$ at a right angle.
 Consider an ideal hyperbolic triangle with angles $\theta_0$, $\frac{\pi}{2}$, $0$, and let $d_0$ be the length of the finite base side.
 Thus $\sinh(d_0) = \cot(\theta_0)$.
 Any finite hyperbolic triangle with angles $\theta$, $\frac \pi 2$ meeting at a base of length $\geq d_0$, must satisfy $\theta < \theta_0$.
 Therefore, if $p \in P$ is at a distance more than $d_0$ from $A_i$, then a hyperplane $\Lambda$ in $\hyperbolic^n$ intersecting $p$ will not intersect $A_i$ provided $\angle(P, \Lambda) \geq \theta_0$.


We now estimate the measures.
Let $ab$ be a length 1 geodesic segment in $\hyperbolic^n$ and $c \in ab$.
Each hyperplane $\Lambda$ with $\Lambda \cap ab = c$ has a unique geodesic perpendicular to $\Lambda$ at $c$.
Therefore the set of all such hyperplanes maps bijectively to the open ball of radius $\frac{\pi}{2}$ in $\reals^{n-1}$, denoted $B_{\frac{\pi}{2}}$. \begin{com}
	I have removed the phrase \emph{maps homeomorphically} since we aren't talking about the topology yet. I have said that we actually have a bijection.
	PROBLEM: We need to say that $B_{\theta}$ is a measurable set in order for the rest of the argument to work.
\end{com}
Let $B_{\theta} \subset B_{\frac{\pi}{2}}$ denote the set of hyperplanes intersecting $ab$ at an angle less than $\theta$.
We may thus identify the set of all hyperplanes transversely intersecting $ab$ with $ab \times B_{\frac{\pi}{2}}$.
By Equation~\eqref{eq:hyperbolicDistance} we have $\#(a,b) = \dist_{\hyperbolic}(a,b)$.
Note that $B_{\frac{\pi}{2}} = \omega(a,b)$ is a measurable set.
By Lemma~\ref{lem:angle of intersection}, we deduce that $B_{\frac{\pi}{2}} - B_{\theta}$ is also measurable.
The measure of the set of hyperplanes intersecting the geodesic $ab$ at an angle of at least $\theta \in (0, \frac \pi 2)$ is therefore equal to
$ \mu( ab \times ( B_{ \frac{\pi}{2} } - B_{\theta}))$.
As $\mu(ab \times (B_{\frac{\pi}{2}} - B_{\theta})) \rightarrow \dist_{\hyperbolic}(a,b)$ as $\theta \rightarrow 0$, there exists $\theta_0 > 0$ such that $\mu(ab \times (B_{\frac{\pi}{2}} - B_{\theta_0})) \geq \frac 1 2 \dist_{\hyperbolic}(a,b) = \frac 1 2$.

Equation~\eqref{eq:distanceBound} follows by considering a maximal integral length subpath of $P$ that is at least $d_0$ from each $A_i$.
\end{proof}

A f.g. free abelian subgroup $A \leqslant G$ is \emph{highest} if $A$ does not have a finite index subgroup contained in a higher rank free abelian subgroup of $G$.
Note that if $G$ acts properly on a finite dimensional CAT(0) cube complex, then every free abelian subgroup of $G$ is contained in a highest subgroup of $G$.

We proved the following in \cite{WiseWoodhouse15}:
 \begin{lem}\label{lem:highest cubical torus} Let $X$ be a compact nonpositively curved cube complex.
 Let $A$ be a highest abelian subgroup of $\pi_1X$.
There is a compact based nonpositively curved cube complex $Y$ and a local isometry
$Y\rightarrow R$ with $\pi_1Y=A$.
\end{lem}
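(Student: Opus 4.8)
The plan is to pass to the universal cover and reformulate the lemma as the existence of a convex subcomplex on which $A$ acts cocompactly, so that the word \emph{highest} is doing exactly one job: forcing cocompactness of the convex hull of an orbit.

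First I would reduce the statement via the standard dictionary between local isometries and convex subcomplexes. Let $G=\pi_1 X$ and let $\widetilde X$ be the universal cover, a finite-dimensional CAT(0) cube complex on which $G$ acts freely, properly and cocompactly. By \cite{HaglundWiseSpecial}, a combinatorial local isometry $Y\to X$ of nonpositively curved cube complexes lifts to a convex embedding $\widetilde Y\hookrightarrow\widetilde X$; conversely, an $A$-invariant convex subcomplex $C\subseteq\widetilde X$ on which $A$ acts freely and cocompactly yields $Y=C/A$ together with the local isometry $Y\to X$ induced by the inclusion $C\hookrightarrow\widetilde X$, and $\pi_1 Y=A$. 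The action of $A$ on $C$ is automatically free since $A\leqslant G$ acts freely on $\widetilde X$, and $Y$ is compact because $C$ is $A$-cocompact and finite-dimensional, and nonpositively curved because a convex subcomplex is CAT(0). Thus the entire lemma reduces to producing such a $C$.

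Next I would take the obvious candidate $C=\hull(Ax_0)$ for a fixed $0$-cube $x_0$. This is manifestly $A$-invariant and convex, and the hyperplanes crossing $C$ are precisely those separating two points of $Ax_0$. Everything now hinges on showing that $A$ acts cocompactly on $C$, equivalently that every $0$-cube of $C$ lies a uniformly bounded distance from $Ax_0$.

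The cocompactness step is the main obstacle, and it is where I expect all the difficulty to concentrate. The failure mode to rule out is exactly the ``diagonal'' one (e.g.\ $A=\langle(1,1)\rangle$ inside $G=\integers^2$ acting on the grid $\Euclidean^2$, whose orbit-hull is the whole plane and hence not $A$-cocompact), and this is precisely what \emph{highest} should exclude. Since $G$ acts cocompactly there are only finitely many $G$-orbits of hyperplanes, so if $A$ failed to act cocompactly on $C$ then among the hyperplanes crossing $C$ some single $G$-orbit would meet infinitely many distinct $A$-orbits, producing arbitrarily long chains of pairwise disjoint, pairwise parallel hyperplanes crossing $C$ that are not matched up by $A$. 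My plan is to convert this excess transverse direction into a forbidden abelian subgroup: using semisimplicity of cube-complex isometries \cite{Haglund2006} together with the convex-core technology of \cite{Haglund2006,SageevWiseCores}, and a double-skewering argument in the spirit of the cubical flat torus theorem of \cite{WiseWoodhouse15}, one extracts an element $g\in G$ skewering such a hyperplane and commuting with a finite-index subgroup $A_0\leqslant A$ while being independent of it. Then $\langle A_0,g\rangle$ is a free abelian subgroup of $G$ of rank $n+1$ containing the finite-index subgroup $A_0$ of $A$, contradicting that $A$ is highest. Once cocompactness is secured, $Y=C/A$ is the required compact nonpositively curved cube complex with $\pi_1 Y=A$ and the desired local isometry $Y\to X$.
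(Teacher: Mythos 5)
First, a point of comparison: the paper itself does not prove Lemma~\ref{lem:highest cubical torus} --- it is quoted from \cite{WiseWoodhouse15}, where it is essentially the cubical flat torus theorem (and the ``$Y\rightarrow R$'' in the statement is a typo for ``$Y\rightarrow X$'', which you silently and correctly fix). Your opening reduction is fine and matches the standard setup: by the dictionary between local isometries and convex subcomplexes from \cite{HaglundWiseSpecial}, the lemma is equivalent to the assertion that a highest abelian subgroup $A\leqslant\pi_1X$ acts cocompactly on the $A$-invariant convex subcomplex $C=\hull(Ax_0)\subseteq\widetilde X$. So you have correctly identified what must be proved, and you rightly flag the cocompactness step as the crux. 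The problem is that your argument for that step does not work.

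Its first implication is false. You claim that if $A$ fails to act cocompactly on $C$ then, since there are only finitely many $G$-orbits of hyperplanes, some single $G$-orbit of hyperplanes crossing $C$ must meet infinitely many distinct $A$-orbits. Your own motivating example refutes this: for $A=\langle(1,1)\rangle\leqslant\integers^2$ acting on the standard squaring of $\Euclidean^2$, the hull of an orbit is the entire plane and the action is not cocompact, yet the hyperplanes crossing the hull fall into exactly \emph{two} $A$-orbits, since the diagonal translation acts transitively on the vertical hyperplanes and on the horizontal ones. (Of course that $A$ is not highest, but your implication is asserted as a consequence of non-cocompactness and finiteness of $G$-orbits alone; highestness enters your argument only at the final contradiction, so it cannot rescue this step as written.) The true obstruction to cocompactness of the hull is the pattern of \emph{crossings} among hyperplanes, which manufactures $0$-cubes far from the orbit, not any excess of $A$-orbits of hyperplanes; consequently the ``long chains of pairwise disjoint hyperplanes unmatched by $A$'' that the rest of your argument feeds on need not exist. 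Moreover, even where such a chain does exist, your final step is also unsupported: double skewering in the sense of Caprace--Sageev produces an element of $G$ skewering two nested halfspaces, but gives no control whatsoever over whether that element commutes with a finite-index subgroup of $A$, and producing a skewering element that virtually centralizes $A$ is precisely the hard content of \cite{WiseWoodhouse15}. Invoking an argument ``in the spirit of the cubical flat torus theorem'' at exactly this point is therefore circular: it assumes the substance of the lemma being proved.
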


The subgroup $A_i\leqslant G$ is a \emph{virtual retract} if there is a finite index subgroup $G_i\leqslant G$ and a retraction $G_i\rightarrow A_i'$ to a finite index subgroup of $A_i'\leqslant A_i$.
$G$ is \emph{virtually special} if there is a finite index subgroup $G'\leqslant G$ such that $G'$ is isomorphic to a subgroup of a right-angled Artin group (raag).

\begin{prop} \label{prop:DispersedRAAGS}
	Let $G$ be finitely generated and virtually special.
	Let $Z_1, \ldots, Z_n$ be infinite cyclic subgroups of $G$.
	Then $G$ acts metrically properly on a CAT(0) cube complex $X$ such that $Z_i \leqslant G$ is dispersed relative to any $0$-cube in $X$.
\end{prop}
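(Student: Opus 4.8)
The plan is to handle the cyclic subgroups one at a time, build for each $Z_i$ a cube complex that disperses it, and then take a finite product of all of these together with one on which $G$ acts metrically properly. Since enlarging the wall set only increases the wall pseudo-metric $\#$, dispersal of $Z_i$ in whichever factor is responsible for it survives in the product, while metric properness is inherited from the proper factor. Throughout I would use the finite-index machinery of Lemmas~\ref{lem:dispersalFiniteIndex}, \ref{lem:IntersectionDispersal} and~\ref{lem:dispersalTransitivity} to reduce to a convenient situation: pass to a finite-index special subgroup $G'\leqslant G$, replace each $Z_i$ by $Z_i\cap G'$ and by its saturation so that $Z_i$ becomes a primitive cyclic subgroup of a highest free abelian subgroup $A_i\leqslant G'$ with $A_i/Z_i$ torsion-free, and at the very end induce the resulting $G'$-complex up to a $G$-complex, transferring dispersal back through Lemma~\ref{lem:dispersalFiniteIndex}.

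For a single $Z_i$ the construction has two ingredients. First, $A_i$ is convex-cocompact: by Lemma~\ref{lem:highest cubical torus} it is carried by a compact cubical torus whose universal cover embeds as a convex flat $F_i\cong\reals^{k_i}$, and $A_i$ is moreover a virtual retract, so there is a finite-index $G_i\leqslant G'$ and a retraction $\rho_i\colon G_i\to A_i$. Second, Proposition~\ref{prop:abelianDispersal} provides a CAT(0) cube complex $C_i\cong\mathbf{R}^{k_i-1}$ on which $A_i$ acts with $0$-cube stabilizer $Z_i$, so that $Z_i$ is dispersed in $A_i$ relative to every $0$-cube, the dispersal being literally the properness of the induced $A_i/Z_i\cong\integers^{k_i-1}$ action. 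Pulling $C_i$ back along $\rho_i$ makes $G_i$ act on $C_i$, and I would set $W_i=X_0\times C_i$, where $X_0$ is a fixed CAT(0) cube complex on which $G$, hence $G_i$, acts metrically properly, furnished by the virtual specialness of $G$. Running over all $i$, inducing each $W_i$ from $G_i$ to $G$, and multiplying then yields the desired $G$-cube complex $X$.

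The main work, and the step I expect to be the real obstacle, is verifying that $Z_i$ is dispersed in $G_i$ on the single product $W_i=X_0\times C_i$. Because the $C_i$-action factors through $\rho_i$, two cosets $gZ_i$ and $g'Z_i$ have the same image in $C_i$ exactly when $g^{-1}g'\in\ker\rho_i$, and these collapsed cosets contribute nothing to the $C_i$-coordinate, so they must be separated by the proper factor $X_0$. Concretely, after grouping cosets by their image $\bar\rho_i(g)\in A_i/Z_i$, of which only boundedly many lie in any $\#$-ball by properness of the $\integers^{k_i-1}$-action on $C_i$, dispersal in $W_i$ reduces to the assertion that $Z_i$ is separated with uniformly bounded multiplicity inside $\ker(G_i\to A_i/Z_i)=\ker\rho_i\rtimes Z_i$ with respect to $X_0$. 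This is a bounded-packing-type statement about the \emph{non} convex-cocompact cyclic subgroup $Z_i$, and it is precisely here that I would have to invoke the separability and bounded-packing properties available for virtually special groups, rather than anything formal about products. Once this is in hand, dispersal of each $Z_i$ relative to an arbitrary $0$-cube follows because the dispersing factor $C_i$ already supplies it for every basepoint by Proposition~\ref{prop:abelianDispersal} (with Proposition~\ref{prop:basepointInvarianceCAT0} handling any remaining basepoint bookkeeping), and metric properness of $X$ is immediate from the $X_0$-factor.
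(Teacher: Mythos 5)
Your architecture matches the paper's in outline --- pass to a special finite-index subgroup $G'$ embedded in a RAAG, take a highest abelian subgroup $A_i$ containing $Z_i\cap G'$, recubulate via Proposition~\ref{prop:abelianDispersal}, transport through the canonical-completion retraction, multiply all the complexes together, and descend with Lemma~\ref{lem:dispersalFiniteIndex} --- but the step you yourself flag as ``the real obstacle'' is a genuine gap, and it is exactly the step the paper's remaining machinery exists to close. Your fiber-by-fiber reduction leaves you needing: $Z_i$ is dispersed inside $\rho_i^{-1}(Z_i)=\ker\rho_i\rtimes Z_i$ relative to the proper factor $X_0=\widetilde{R}$. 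No appeal to ``separability and bounded-packing properties of virtually special groups'' yields this, because dispersal is measured by walls separating \emph{entire coset orbits}: when $Z_i$ is a ``diagonal'' cyclic subgroup of a highest $A_i\cong\integers^k$, the convex hull of a $Z_i$-orbit in $\widetilde{R}$ can be an entire rank-$k$ flat, and then very few hyperplanes separate two such hulls even when the corresponding cosets are far apart in the word metric. (This is precisely why $Z_i$ is typically \emph{not} dispersed relative to $\widetilde{R}$ itself, and why Proposition~\ref{prop:abelianDispersal} must recubulate.) So bounded packing of $Z_i$ as an abstract subgroup is the wrong currency, and your reduction is left resting on an unproven claim about the wrong factor's wall geometry.

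The paper closes this gap without ever making a claim about $Z_i$ inside $\ker\rho_i\rtimes Z_i$: it interposes $A_i$ and composes two dispersals via Lemma~\ref{lem:dispersalTransitivity}. Namely, (i) $Z_i'\leqslant G'\cap A_i$ is dispersed relative to the recubulated factor $(X_i)^{d_i}$ (Proposition~\ref{prop:abelianDispersal} plus the retraction), and (ii) $G'\cap A_i\leqslant G'$ is dispersed relative to $\widetilde{R}$ by Lemma~\ref{lem:cocompactDispersal} --- here the convex-cocompactness of the \emph{highest abelian} subgroup (Lemma~\ref{lem:highest cubical torus}), not of $Z_i$, is what makes bounded packing convert into wall separation, together with Lemma~\ref{lem:IntersectionDispersal} and Proposition~\ref{prop:basepointInvarianceCAT0}. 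Both dispersals persist on the product $\widetilde{R}\times\prod_j(X_j)^{d_j}$ since adding walls only increases $\#$, and then the first part of Lemma~\ref{lem:dispersalTransitivity} gives $Z_i'\leqslant G'$ dispersed. Incidentally, this also shows how your fiber claim could be repaired: for distinct $k,k'\in\ker\rho_i$ the orbits $kZ_i'x$ and $k'Z_i'x$ lie in the translates $kY_i$, $k'Y_i$ of the $A_i$-cocompact convex subcomplex, which represent distinct cosets $kA_i\neq k'A_i$ (as $\ker\rho_i\cap A_i=1$), and every wall separating $kY_i$ from $k'Y_i$ separates the two orbits, so Lemma~\ref{lem:cocompactDispersal} applied to $A_i$ finishes it. But that repair is the paper's transitivity argument in disguise; as written, your proposal stops short of it.
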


\begin{proof}
	Let $G' \leqslant G$ be an index $d$ subgroup such that $G'$ is special and hence $G' \hookrightarrow \pi_1 R$, where $R$ is a Salvetti complex for a right angled Artin group.
	Moreover, since $G'$ is finitely generated we may assume that $\pi_1 R$ is finitely generated and hence $R$ is compact.
	Let $Z_i' = Z_i \cap G'$.
	Let $A_i \leqslant \pi_1 R$ be a highest free abelian group containing $Z_i'$.
	By Lemma~\ref{lem:highest cubical torus} there exists a based local isometry $Y_i \rightarrow R$ from compact non-positively curved cube complex $Y_i$ such that $\pi_1 Y_i  = A_i$.
	Applying canonical completion and retraction~\cite{HaglundWiseSpecial} there is a degree $d_i$ cover $R_i \rightarrow R$ such that there is a retraction $r_i : \pi_1 R_i \rightarrow A_i$.
	By Lemma~\ref{prop:abelianDispersal} $A_i$ acts on a CAT(0) cube complex $X_i$ such that $Z_i' \leqslant A_i$ is dispersed relative to any choice of $0$-cube.
	Using the retraction $r_i$ there is an action of $\pi_1 R_i$ on $X_i$ that extends the action of $A_i$, and hence an action of $\pi_1 R$ on $(X_i)^{d_i}$ such that $Z_i' \leq A_i$ is dispersed relative to any $0$-cube in $(X_i)^{d_i}$. \begin{com}Do we want a lemma addressing the products of wallspaces?\end{com}
	So $G'$ acts on $(X_i)^{d_i}$, and by Lemma~\ref{lem:dispersalTransitivity} we deduce that $Z_i' \leq (G'\cap A_i)$ is dispersed relative to any $0$-cube in $(X_i)^{d_i}$.
	By Lemma~\ref{lem:cocompactDispersal} and Proposition~\ref{prop:basepointInvarianceCAT0} we know that $\pi_1 R$ acts metrically properly on $\widetilde{R}$ such that $A_i \leq \pi_1 R$ is dispersed relative to any $0$-cube.
	Therefore Lemma~\ref{lem:IntersectionDispersal} implies that $G'$ acts metrically properly on $\widetilde{R}$ such that $G'\cap A_i \leq G'$ is dispersed relative to any $0$-cube in $\widetilde{R}$.
	Thus, $G'$ acts diagonally on $\widetilde{R} \times \prod_{i=1}^n (X_i)^{d_i}$ such that the action is metrically proper and by Lemma~\ref{lem:dispersalTransitivity} $Z_i' \leq G'$ is dispersed relative to any $0$-cube.
	Finally $G$ acts metrically properly on $(\widetilde{R} \times \prod_{i=1}^n (X_i)^{d_i})^d$ such that $Z_i' \leq G'$ is dispersed relative to any $0$-cube in $(\widetilde{R} \times \prod_{i=1}^n (X_i)^{d_i})^d$, and therefore by Lemma~\ref{lem:dispersalFiniteIndex} $Z_i \leq G$ is dispersed relative to any $0$-cube in $(\widetilde{R} \times \prod_{i=1}^n (X_i)^{d_i})^d$.
\end{proof}

\section{Standard Probability Spaces}

\begin{defn} \label{defn:isomorphism}
An \emph{isomorphism} $f:(\Omega_1, \mathcal{B}_1, \mu_1)\rightarrow (\Omega_2, \mathcal{B}_2, \mu_2)$ between measure spaces
is an invertible map $f:\Omega_1\rightarrow \Omega_2$ such that $f(U) \in \mathcal{B}_2$ if and only if $U \in \mathcal{B}_1$, and moreover $\mu_1(U)=\mu_2(f(U))$.
We say $(\Omega_1,\mu_1)$ and $(\Omega_2,\mu_2)$ are isomorphic \emph{mod-zero}  if there is an isomorphism
between $(\Omega_1',\mu_1')$ and $(\Omega_2',\mu_2')$ where each $(\Omega_i',\mu_i')$ is obtained by removing a nullset.
\end{defn}

A probability space $(\Omega,\mathcal{B},\mu)$ is \emph{standard} if it is isomorphic mod-zero to $([0,1],\lambda)$.
We caution that standard probability spaces are sometimes defined in a more general fashion that permits them to contain atoms.
 We refer to \cite{delaRue93} for the following:
 \begin{thm}[Topological Characterization of Standard]
 \label{thm:topological characterization}
 Suppose $(\Omega,\mathcal{B},\mu)$ has no atoms.
 Then $(\Omega,\mathcal{B},\mu)$ is standard if and only if
 there exists a topology $\tau$ on $\Omega$ such that the following hold:
 \begin{enumerate}
  \item $(\Omega, \tau)$ is metrizable.
  \item $\mathcal{B}$ is the completion of the $\sigma$-algebra generated by $\tau$.
  \item $\sup \mu(K) =1 $ where the supremum is taken over all compact sets $K$.
 \end{enumerate}
\end{thm}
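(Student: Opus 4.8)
The plan is to prove both implications, with essentially all of the content residing in the ``if'' direction, where the compactness hypothesis (3) is the crucial ingredient. For the ``only if'' direction, suppose $(\Omega,\mathcal{B},\mu)$ is standard, so there is an isomorphism mod-zero $f\colon (\Omega\setminus N,\mu)\to([0,1]\setminus M,\lambda)$ with $\mu(N)=0$ and $\lambda(M)=0$. I would pull back the Euclidean topology along $f$ to a metrizable topology on $\Omega\setminus N$, and extend it to all of $\Omega$ by adjoining the points of $N$ with the discrete metric at distance $1$ from $\Omega\setminus N$; this is a genuine metric, giving (1). Condition (2) holds because $f$ carries the Borel sets of $[0,1]$ to the pulled-back $\sigma$-algebra, whose $\mu$-completion recovers $\mathcal{B}$ once the ($\mu$-null, hence harmless) subsets of $N$ are absorbed. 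Condition (3) follows from inner regularity of Lebesgue measure: choosing compacts $L_n\subseteq[0,1]\setminus M$ with $\lambda(L_n)\to1$, the preimages $f^{-1}(L_n)$ are compact with $\mu(f^{-1}(L_n))\to1$.

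For the ``if'' direction, fix a metric $d$ inducing $\tau$. By (3) choose compact sets $K_n$ with $\mu(K_n)\to1$; replacing $K_n$ by $K_1\cup\cdots\cup K_n$ we may take them increasing, so that $\Omega_0:=\bigcup_n K_n$ has full measure. Each $K_n$ is a compact metric space, hence $\Omega_0$ is separable; let $(\widehat{\Omega},\widehat{d})$ be the metric completion of $(\Omega_0,d)$, which is then a Polish space. Since each $K_n$ is compact it is complete, hence closed in $\widehat{\Omega}$, so $\Omega_0=\bigcup_n K_n$ is an $F_\sigma$, and therefore a Borel, subset of $\widehat{\Omega}$. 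Consequently $(\Omega_0,\mathcal{A}_0)$, with $\mathcal{A}_0$ its Borel $\sigma$-algebra, is a \emph{standard Borel space}. This is precisely the step where (3) is indispensable: without tightness the full-measure part need only be separable metrizable, and its Borel structure can then fail to be standard.

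It remains to identify $(\Omega,\mathcal{B},\mu)$, up to isomorphism mod-zero, with $(\Omega_0,\mathcal{A}_0,\mu|_{\Omega_0})$, and then the latter with $([0,1],\lambda)$. The first identification is exactly condition (2): completing a $\sigma$-algebra does not alter the associated measure algebra, and discarding the null set $\Omega\setminus\Omega_0$ does not either, so the measure algebras of the two spaces coincide. For the second, $\mu|_{\Omega_0}$ is a non-atomic Borel probability measure on a standard Borel space. By the Borel isomorphism theorem $\Omega_0$ is Borel-isomorphic to $[0,1]$ (it is uncountable, since a non-atomic measure cannot be carried by a countable set), transporting $\mu$ to a non-atomic Borel probability $\nu$ on $[0,1]$; the cumulative distribution $t\mapsto\nu([0,t])$ is then an isomorphism mod-zero from $([0,1],\nu)$ to $([0,1],\lambda)$. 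Composing these maps yields the desired isomorphism $(\Omega,\mathcal{B},\mu)\to([0,1],\lambda)$.

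I expect the main obstacle to be the reduction to a standard Borel space in the second paragraph: the real assertion of the theorem is that the purely measure-theoretic datum $(\mathcal{B},\mu)$ admits a \emph{spatial} realization, and condition (3) is exactly what upgrades ``separable metrizable'' to ``Borel subset of a Polish space.'' The non-atomicity hypothesis is used only at the final step, to exclude a discrete part and ensure the target is $[0,1]$ rather than a combination of atoms. A more self-contained alternative would replace the Borel isomorphism theorem by coding each point of $\Omega_0$ through a refining, generating sequence of finite measurable partitions into Cantor space, in which case (3) resurfaces as the statement that a nested sequence of nonempty compact cells has nonempty intersection, guaranteeing that the coding map is invertible mod-zero.
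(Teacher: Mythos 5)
Your proof is essentially correct, but note that the paper does not prove this theorem at all: it is quoted as a known result, with the reader referred to de la Rue \cite{delaRue93}. So there is no internal argument to compare against; what you have written is a self-contained proof along the classical lines that de la Rue's treatment follows. Your ``if'' direction is the right argument and correctly isolates where hypothesis (3) enters: tightness lets you exhaust a full-measure set $\Omega_0$ by compacta, realize $\Omega_0$ as an $F_\sigma$ (hence Borel) subset of the Polish completion, and thereby land in a standard Borel space, after which the Borel isomorphism theorem and the CDF trick finish the job; non-atomicity is indeed what guarantees uncountability of $\Omega_0$ and continuity of the distribution function. The one point you should tighten is the $\sigma$-algebra bookkeeping against the paper's Definition~\ref{defn:isomorphism}, which demands an \emph{exact} correspondence of measurable sets ($f(U)\in\mathcal{B}_2$ iff $U\in\mathcal{B}_1$): your intermediate space $(\Omega_0,\mathcal{A}_0,\mu|_{\Omega_0})$ carries the (incomplete) Borel $\sigma$-algebra while $\mathcal{B}$ is by hypothesis (2) a completion, so the identity map is not literally an isomorphism in that sense. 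The fix is routine --- replace $\mathcal{A}_0$ by its $\mu$-completion throughout, and observe at the end that a measure-preserving Borel isomorphism mod-zero extends uniquely to an isomorphism of the completions (with $([0,1],\lambda)$ read as completed Lebesgue measure, which is also what makes your ``only if'' direction consistent, since condition (2) forces $\mathcal{B}$ to be complete) --- but it should be said. With that caveat addressed, your argument is a valid substitute for the external citation.
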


The following generalization of standard plays an important role in Section~\ref{sec:main}:

\begin{defn}[$\integers$-standard measure space]
Let $(\Omega, \mu)$ be a measure space with a $\integers$ action.
We say $(\Omega, \mu)$ is \emph{$\integers$-standard} if there is a $\integers$-equivariant mod-zero isomorphism to $(\reals, \varrho \lambda)$ where $\lambda$ is the Lebesgue measure, and $\varrho >0$.
\end{defn}

\begin{lem}[Fundamental Domain]
Let $(\Omega,\mu)$ be a measure space with a $\integers$-action.
Suppose it has a measurable fundamental domain $D\subset \Omega$ that is standard, after scaling the measure.
Then $(\Omega,\mu)$ is $\integers$-standard.
\end{lem}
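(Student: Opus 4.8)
The plan is to build the required $\integers$-equivariant mod-zero isomorphism explicitly by transporting the isomorphism on $D$ across all of its translates. First I would record the data: since $D$ is standard after scaling, the constant $\varrho := \mu(D)$ is finite and positive, and by the definition of standard (together with Definition~\ref{defn:isomorphism}) there is a mod-zero isomorphism $\phi\colon (D,\mu|_D) \to ([0,1],\varrho\lambda)$, that is, $\mu(\phi^{-1}(U)) = \varrho\lambda(U)$ for every Borel $U\subseteq[0,1]$. Because $D$ is a fundamental domain for the $\integers$-action, the translates $\{n\cdot D\}_{n\in\integers}$ partition $\Omega$ up to a nullset, and each point outside this nullset is uniquely of the form $n\cdot d$ with $n\in\integers$ and $d\in D$.

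Next I would define $\Phi\colon \Omega \to \reals$ by $\Phi(n\cdot d) = n + \phi(d)$ for $d\in D$, which is well defined off the nullset above. Equivariance is immediate: if $1$ denotes the generator of $\integers$ acting on both $\Omega$ and $\reals$ (the latter by $r\mapsto r+1$), then $\Phi(1\cdot(n\cdot d)) = \Phi((n+1)\cdot d) = (n+1)+\phi(d) = \Phi(n\cdot d)+1$. Since $\phi$ carries $D$ bijectively mod-zero onto $[0,1]$ and the endpoints $\{0,1\}$ are $\lambda$-null, $\Phi$ restricts to a bijection mod-zero from $n\cdot D$ onto $[n,n+1]$, and these glue to a bijection mod-zero $\Omega\to\reals$. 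Measurability of $\Phi$ is checked piecewise on the countably many translates $n\cdot D$ and then assembled, using that each action map is a measurable automorphism.

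The substantive step is measure-preservation, and here the main obstacle is that it relies on the $\integers$-action being $\mu$-preserving --- which I would note is built into the setting (the target $(\reals,\varrho\lambda)$ carries the measure-preserving translation action, so $\integers$-equivariance of a measure isomorphism forces this on $\Omega$). Granting this, for Borel $U\subseteq[n,n+1]$ one has $\Phi^{-1}(U)\cap(n\cdot D) = n\cdot\phi^{-1}(U-n)$, whence
\[
\mu(\Phi^{-1}(U)) = \mu\bigl(\phi^{-1}(U-n)\bigr) = \varrho\lambda(U-n) = \varrho\lambda(U),
\]
using measure-preservation of the action for the first equality and translation-invariance of $\lambda$ for the last. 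Since the sets $[n,n+1]$ generate the Borel $\sigma$-algebra of $\reals$ and $\Phi$ is a bijection mod-zero, this shows $\Phi$ is a mod-zero isomorphism onto $(\reals,\varrho\lambda)$ in the sense of Definition~\ref{defn:isomorphism}; being $\integers$-equivariant by construction, it witnesses that $(\Omega,\mu)$ is $\integers$-standard. The only care needed beyond this is bookkeeping of the boundary nullsets $\{n\}\subset\reals$ together with the translates of the exceptional nullset of $\phi$, all of which are $\mu$- and $\lambda$-null and may be discarded.
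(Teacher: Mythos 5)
Your proof is correct and takes essentially the same approach as the paper: the paper's entire proof is the single sentence that the mod-zero isomorphism $f\colon(D,\mu)\to([0,\mu(D)],\lambda)$ extends to a $\integers$-equivariant mod-zero isomorphism $(\Omega,\mu)\to(\reals,\lambda)$, which is precisely the transport-across-translates construction you carry out in detail. Your explicit treatment of measurability, the boundary nullsets, and the (implicitly assumed) measure-preservation of the $\integers$-action simply fills in what the paper leaves to the reader.
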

\begin{proof}
The mod-zero isomorphism  $f:(D,\mu) \rightarrow ([0,\mu(D)],\lambda)$ extends to a $\integers$-equivariant
mod-zero isomorphism $f:(\Omega,\mu)\rightarrow (\reals,\lambda)$.
\end{proof}

\begin{exmp}
 The continuous measured wallspace $(\mathbf{R}, \mathcal{W}_\reals, \mathcal{B}, \mu)$ of Example~\ref{exmp:the Line}, with its natural $\integers$-action is $\integers$-standard with fundamental domain $\omega(0, 1)$.
\end{exmp}

\begin{exmp}\label{exmp:cont cube axis standard}
Let $\integers$ act freely on a CAT(0) cube complex $ X$,
and let $(X, \mathcal{W}_X, \mathcal{B}_X, \mu_X)$ be the associated continuous measured wallspace of Example~\ref{exmp:CAT0CubeComplexes}.
Suppose $\integers$ stabilizes a combinatorial axis $A$.
Consider the measure space whose elements are the walls intersecting $A$,
and whose measurable subsets, and measure is induced from the measured wallspace structure on $\tilde X$.
This will be isomorphic mod-zero to $(\mathbf{R}, \mathcal{W}_\reals, \mathcal{B}_\reals, \lambda)$.
Then $(X, \mathcal{W}_X, \mathcal{B}_X, \mu_X)$ is $\integers$-standard.
\end{exmp}

\begin{exmp}\label{exmp:hyperbolic axis standard}
Recall the measured wallspace structure on $\hyperbolic^n$ in
 Example~\ref{exmp:hyperbolicSpace}.
 In Lemma~\ref{lem:hyperbolicDispersal}, we identified the subset of walls transversely intersecting a finite geodesic in $\hyperbolic^n$ with the points in $[a,b] \times B^{n-1}$, where $B^{n-1}$ is the open ball of dimension $n$, but we did not determine the topology on the set of such walls.
 We will now show that the subspace of walls intersecting a bi-infinite geodesic is homeomorphic to $\reals \times B^{n-1}$.
Let $\gamma \subseteq \hyperbolic^n$ be a bi-infinite geodesic, and let $\Lambda$ be a hyperplane such that $\gamma \cap \Lambda = \{ p \}$.
There is a subspace $R \subset \Isom(\hyperbolic^n)$, consisting of rotations at $p$, with $R$ homeomorphic to the open ball $B^{n-1}$ such that $R\Lambda$ is the set of all hyperplanes intersecting $\gamma$ transversely at $p$.
Let $U_\gamma \subseteq \mathcal{H}$ be the subspace consisting of hyperplanes intersecting $\gamma$ transversely.
Each hyperplane in $U_\gamma$ is obtained by rotating $\Lambda$ by an element of $R$ and then translating along $\gamma$, and conversely each such transformation gives a unique hyperplane in $U_\gamma$.
Therefore, there is a subspace $V_\gamma \subseteq \Isom(\hyperbolic^n)$ that is homeomorphic to $\reals \times B^{n-1}$, that injects in $\Isom(\hyperbolic^n) / \stabilizer(\Lambda)$ such that $V_\gamma \Lambda = U_\gamma$.
As in Example~\ref{lem:hyperbolicDispersal}, the subset $U_\gamma \subseteq \mathcal{H}$ can be identified with the set $\reals \times B^{n-1}$, and is the image of $V_\gamma$ in $\mathcal{H} = \Isom(\hyperbolic^n) / \stabilizer(\Lambda)$.

Note that $\Isom(\hyperbolic^n) / \stabilizer(\Lambda)$ is Hausdorff since $\stabilizer(\Lambda)$ is a closed subgroup of a Lie Group. \begin{com} Bourbaki General Topology III 2.5 Prop 13 \end{com}
Thus $U_\gamma$ is Hausdorff.
Hence, the restriction $V_\gamma \rightarrow U_\gamma$ is a topological embedding on each compact set.
Since $V_\gamma$ is locally compact, we see that $V_\gamma \rightarrow U_\gamma$ is also an open map, and we conclude it is a homeomorphism.
We conclude that $U_\gamma$ is homeomorphic to $\reals \times B^{n-1}$.

Observe that there are no atoms in this measure space.
 Let $Z\subset \Isom(\hyperbolic^n)$ be a cyclic group acting freely on $\gamma$ and hence on $U_\gamma$.
  Then $Z$ preserves the product structure and acts freely on the $\reals$-factor with some fundamental domain $(a,b)$, and hence the action of $Z$  on $U_\gamma$ has fundamental domain $(a,b)\times B^{n-1}$.
  As observed in Example~\ref{exmp:hyperbolicSpace}, the Borel sets are measurable.
   Since $(a,b) \times B^{n-1}$ is metrizable, complete, and separable it is a standard probability space
by Theorem~\ref{thm:topological characterization}.
Consequently, $(U_\gamma, \mathcal{B}_{\hyperbolic} \cap U_\gamma , \mu)$ is $\integers$-standard.
\end{exmp}

%

\section{Main Theorem}
\label{sec:main}

\subsection{Graphs of Groups with Measured Wallspaces}

\begin{defn}[Cut, Skim, Disjoint]
Let $(X, \mathcal{W}, \mathcal{B}, \mu)$ be a measured wallspace with a $G$-action, for some group $G$.
Let $x \in X$ and $\Lambda \in \mathcal{W}$.
The wall $\Lambda$ \emph{cuts} $Gx$ if  each halfspace of $\Lambda$ contains infinitely many elements of $Gx$.
The wall $\Lambda$ \emph{skims} $Gx$ if one halfspace contains a finite, non-empty subset of $Gx$.
The wall $\Lambda$ is \emph{disjoint} from $Gx$ if one halfspace is empty.
Note that if $G$ is countable, the sets of walls that separate, skim, and are disjoint from $Gx$ belong to $\mathcal{B}$.
\end{defn}

For $(X,\mathcal W, \mathcal{B}, \mu)$ and $\lambda >0$, we define the \emph{scaled measured wallspace}
$(X ,\mathcal W , \mathcal{B}, \lambda \mu)$  with property that its measure has been scaled by $\lambda$.

\begin{defn}
A splitting of a group $G$ as a \emph{graph of groups with measured wallspaces} consists of a simplicial graph of groups $\Gamma$ such that:
 \begin{enumerate}
  \item Each vertex group $G_v$ acts metrically properly on a measured wallspace $(X_v,\mathcal W_v, \mathcal{B}_v, \mu_v)$.
  \item Each edge group $G_e$ acts metrically properly on a measured wallspace $(X_e,\mathcal W_e, \mathcal{B}_e, \mu_e)$.
  \item For each edge $e$ incident to $v$, there is a chosen $x_v^e \in X_v$.
       Let $\mathcal{W}_v^e \subseteq \mathcal{W}_v$ denote the set of walls that cut $G_ex_v^e$.
      Let $\mathcal{B}_v^e \subset \mathcal{B}_v$ be the $\sigma$-algebra consisting of the subsets of $\mathcal{W}_v^e$.
 \item\label{ax:skim measure zero} The set of walls in $\mathcal W_v$ that skim $G_ex_v^e$ have measure zero.
  \item\label{ax:edge inclusion} For each edge $e$ adjacent to $v$, let $\varphi^e_{v}: G_e \rightarrow G_{v}$ denote the corresponding inclusion. There is $\varrho^e_{v}>0$ and a $G_e$-equivariant mod-zero isomorphism $ \phi_v^e: (\mathcal W_e, \mathcal{B}_e, \varrho_v^e \mu_e) \rightarrow (\mathcal{W}_v^e, \mathcal{B}_v^e, \mu_{v})$.
 \end{enumerate}
\end{defn}

Note that the requirement that $\Gamma$ be simplicial is purely for notational convenience; if both endpoints of an edge were at the same vertex it would be inconvenient to specify the attaching maps. In any case, by possibly passing to an index~2 subgroup, one can always ensure simpliciality while maintaining the hypotheses of the following result which is our main theorem.

\begin{thm}\label{thm:main}
Let $G$ split as a graph $\Gamma$ of groups with measured wallspaces and:
\begin{enumerate}
\item Each edge group $G_e = \langle g_e \rangle$ is infinite cyclic.
\item \label{ax:funDomain} $(\mathcal{W}_e, \mathcal{B}_e, \mu_e)$ has fundamental domain $\omega(x_e, g_ex_e)$.
\item \label{ax:funDDDomain} The image $\phi_v^e(\omega(x_e, g_ex_e)) = \omega ( x_v^e, \varphi_v^e(g_e) x_v^e)$ for all edges $e$ incident to a vertex $v$.
\item \label{ax:countableVertices} The Bass-Serre tree $T$ has countably many vertices.
\item\label{ax:dispersed} The subgroup $G_e \leqslant G_v$  is dispersed relative to $ x_v^e \in X_v$.
%
\end{enumerate}
\noindent Then $G$ is aTmenable.
\end{thm}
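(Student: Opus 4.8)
The plan is to manufacture a single measured wallspace $(X,\mathcal W,\mathcal B,\mu)$ on which $G$ acts, and then to verify metric properness, at which point Lemma~\ref{lem:prop measured wallspace action gives atmenable} delivers aTmenability. I take $X$ to be the \emph{tree of spaces} over the Bass--Serre tree $T$: its underlying set is the disjoint union $\bigsqcup_{\tilde v} X_{\tilde v}$ of one copy $X_{\tilde v}$ of the vertex wallspace $X_v$ for each vertex $\tilde v$ of $T$ (with $v$ the image of $\tilde v$ in $\Gamma$), and for each edge $\tilde e$ of $T$ joining $\tilde v$ to $\tilde w$ I record the two basepoints $x_v^e, x_w^e$ as the loci where $\tilde e$ attaches the adjacent fibres. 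Since $G$ acts on $T$ permuting vertices and edges while each vertex stabiliser acts on its fibre, these combine to a $G$-action on $X$.

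Next I describe the walls and the measure. A wall $\Lambda$ of a fibre $X_{\tilde v}$ that is \emph{disjoint} from every orbit $G_e x_v^e$ meeting $\tilde v$ extends to a wall of $X$ by placing, for each edge $\tilde e$ at $\tilde v$, the whole subtree beyond $\tilde e$ on the side of $\Lambda$ containing $x_v^e$; these are the \emph{fibre walls}. A wall that \emph{cuts} some $G_e x_v^e$ matches, under the mod-zero isomorphism $\phi_v^e \colon (\mathcal W_e, \mathcal B_e, \varrho_v^e\mu_e) \to (\mathcal W_v^e, \mathcal B_v^e, \mu_v)$, a wall of the edge space $X_e$, hence continues coherently across $\tilde e$ into the neighbouring fibre; iterating, it propagates through $T$ as a connected hyperplane possibly crossing several edges. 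The hypotheses make the bookkeeping consistent: axiom~(\ref{ax:skim measure zero}) forces the skim walls into a null set, so up to measure zero every fibre wall is transverse to each edge orbit and the cut/disjoint dichotomy holds almost everywhere; and hypotheses~(\ref{ax:funDomain}),~(\ref{ax:funDDDomain}) with the equivariance of $\phi_v^e$ carry the window $\omega(x_e, g_e x_e)$ to $\omega(x_v^e, \varphi_v^e(g_e) x_v^e)$, so the measure of edge-crossing walls agrees on both sides of every edge. I then assemble the $G$-invariant measure $\mu$ from the fibre measures $\mu_v$ together with the edge windows, identifying each propagating wall along all the edges it crosses so that it is counted once. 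Hypothesis~(\ref{ax:countableVertices}) makes $\mathcal W$ a countable union of standard pieces, so $\omega(p,q)$ is reached from the $\mu_v$- and $\mu_e$-measurable sets by countably many operations and lies in $\mathcal B$; finiteness of $\#(p,q)$ holds because the separating walls live over the finite geodesic $[\tilde v, \tilde w]$ between the fibres of $p$ and $q$, contributing the finite quantity $\#_{X_u}(\cdot,\cdot)$ in each traversed fibre and the finite measure $\varrho_v^e\,\mu_e(\omega(x_e, g_e x_e))$ at each traversed edge.

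For metric properness I fix $p_0$ in the base fibre $X_{\tilde v_0}$ and let $g_n \to \infty$ in $G$. The total pseudometric dominates the fibre contribution at the base vertex, $\#(p_0, g_n p_0) \geq \#_{X_{v_0}}(p_0, a_n)$, where $a_n$ is the attaching point of the first edge of the geodesic from $\tilde v_0$ to $g_n \tilde v_0$, because a fibre wall of $X_{v_0}$ separating $p_0$ from $a_n$ separates $p_0$ from the entire far subtree, and hence from $g_n p_0$. If the $g_n$ eventually lie in the stabiliser of $\tilde v_0$ and exhaust it, metric properness of $G_{v_0}$ on $X_{v_0}$ gives divergence; otherwise the first edges realise infinitely many distinct cosets $g_n G_e$, and dispersal~(\ref{ax:dispersed}) says precisely that $\#_{X_{v_0}}(G_e x_{v_0}^e, g_n G_e x_{v_0}^e) \to \infty$, whence $\#(p_0, g_n p_0) \to \infty$. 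Applying the same estimate fibre-by-fibre along the tree-geodesic, by induction on $\dist_T(\tilde v_0, g_n \tilde v_0)$, upgrades this to the case where $g_n$ also translates deep into $T$.

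The step I expect to be the main obstacle is the construction and well-definedness of $\mu$: because a single propagating wall may cross many edges of $T$, one must check that identifying it along all of those edges still produces a bona fide measure for which $\#(p,q)$ equals the undistorted sum of the local contributions along $[\tilde v, \tilde w]$, with no over-counting. This is exactly what the measure-preserving matchings $\phi_v^e$ and the window conditions~(\ref{ax:funDomain})--(\ref{ax:funDDDomain}) control, reducing the global measure to a coherent gluing of the local ones, with axiom~(\ref{ax:skim measure zero}) guaranteeing that the gluing is well defined almost everywhere. A secondary subtlety is that dispersal is a priori basepoint-dependent (Remark~\ref{rem:basepointInvariance}); in the settings of interest this is neutralised by Proposition~\ref{prop:basepointInvarianceCAT0}, so the particular choice of $x_v^e$ does not affect the properness argument.
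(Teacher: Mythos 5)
Your construction omits two essential components of the argument, and both omissions are fatal rather than cosmetic. First, you never reduce to the case where all scaling constants $\varrho_v^e$ equal $1$. Your claim that hypotheses~(\ref{ax:funDomain}) and~(\ref{ax:funDDDomain}) ensure that ``the measure of edge-crossing walls agrees on both sides of every edge'' is false in general: the isomorphism $\phi_v^e$ carries $\varrho_v^e\mu_e$ to $\mu_v$, so a set of edge-space walls of $\mu_e$-measure $m$ has $\mu_u$-measure $\varrho_u^e m$ in one incident vertex space and $\mu_v$-measure $\varrho_v^e m$ in the other; the window conditions match fundamental domains set-theoretically, not their measures. When $\varrho_u^e\neq\varrho_v^e$ (and, worse, when the product of these ratios around a cycle of $\Gamma$ differs from $1$) no consistent gluing of the $\mu_v$'s exists, so the measure you ``assemble'' is not well defined. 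The paper handles this by first constructing the modular homomorphism $G\to\reals^*$ (Lemma~\ref{lem:modularHomomorphism}), passing to its kernel $G'$, whose splitting is monic, rescaling the wallspaces over the cover $\widehat\Gamma$, and invoking Lemma~\ref{lem:aTmen by amen} (aTmenable-by-amenable is aTmenable). Only after this reduction does well-definedness of the horizontal measure go through, via Lemma~\ref{lem:key point}, which explicitly uses $\varrho_{\tilde e}^{\pm}=1$. You correctly flag well-definedness as the main obstacle, but then assert that the hypotheses control it; without the monic reduction they do not.

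Second, your wallspace has no vertical walls. The paper introduces, for each edge $\tilde e$ of $T$, a wall whose halfspaces are the two halves of the tree of spaces, and---crucially---assigns the walls over the $i$-th edge orbit measure $i$. These are indispensable in the properness proof. Your properness sketch only treats the case where the first edges of the geodesics from $\tilde v_0$ to $g_n\tilde v_0$ lie in a single $G_{v_0}$-orbit, where dispersal~(\ref{ax:dispersed}) applies. If $v_0$ has infinitely many edge orbits in $\Gamma$, the attaching points $x_{v_0}^{e_i}$ for distinct orbits may all lie in a bounded region of $X_{v_0}$, so the horizontal-wall separation $\#(p_0,g_np_0)$ can remain bounded while $g_n\to\infty$; dispersal says nothing about distinct orbits, and only the escalating measures of the vertical walls exclude this. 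The vertical walls are also what bounds the diameter of the minimal subtree $S$ carrying the orbit points in the paper's proof (each edge of $S$ contributes a wall of measure at least $1$), which is precisely the step your induction on $\dist_T(\tilde v_0,g_n\tilde v_0)$ implicitly relies on but cannot justify with horizontal walls alone.
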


\begin{rem}
 Condition~\eqref{ax:funDomain} is equivalent to following condition: almost all $\Lambda$ partition $G_ex_v^e$ as $\{g_e^nx_v^e \}_{m > M} \; \sqcup \; \{g_e^n x_v^e \}_{m \leq M}$ for some $M \in \integers$ depending on $\Lambda, v,e$.
\end{rem}

\begin{rem}
 Condition~\eqref{ax:funDDDomain} can be replaced by the following more general condition which is not required for our applications: For any edge $e\in \Gamma$ with endpoints $v_1,v_2$, we require that
 $\mu_e \big(L({v_1},e) \cap R({v_2}, e) \big) < \infty$ \begin{com} only need one of these since we do not direct $e$\end{com} and $\mu_e \big(L({v_2},e) \cap R({v_1}, e) \big) < \infty$ where:
  $$R(v, e) = \Big\{ \Lambda \in \mathcal{W}_e \mid \phi_v^e(\Lambda) \in \bigcup_{m\in \{0,1,2,\ldots \}} \omega(\varphi_v^e(g_e^m)x_v^e, \varphi_v^e(g_e^{m+1})x_v^e) \Big\}$$
 $$ L(v,e) = \Big\{ \Lambda \in \mathcal{W}_e \mid \phi_v^e(\Lambda) \in \bigcup_{m\in \{0,1,2,\ldots \}} \omega(\varphi_v^e(g_e^{-m})x_v^e, \varphi_v^e(g_e^{-(m+1)})x_v^e) \Big\}$$

 \noindent This is what would ensure that $\omega(x_{\tilde{v}_{i-1}}^{\tilde{e}_i},x_{\tilde{v}_i}^{\tilde{e}_{i}})$ is a finite measurable set in Lemma~\ref{lem:separatingWallsAreMeasurableSet}.
 \end{rem}

\begin{defn}[Monic] \label{defn:monic}
If a group $G$ satisfies the hypotheses of Theorem~\ref{thm:main}, and has the additional property that each $\varrho_v^e = 1$, then we say that the splitting of $G$ is \emph{monic}.
Much of our discussion will focus on the monic special case.
\end{defn}

Before outlining the proof of Theorem~\ref{thm:main}, we show that it implies Theorem~\ref{thm:typical application}.

\begin{proof}[Proof of Theorem~\ref{thm:typical application}]
 Suppose that $G$ splits as a graph of groups $\Gamma$.
 We will show that $G$ satisfies the criterion of Theorem~\ref{thm:main}. \begin{com}
 	CORRECTION: Inserted explicit statement at the start of the proof that we will be verifying the criterion of Theorem~\ref{thm:typical application}.
 \end{com}
 Each edge group $G_e \cong \integers$ acts on $(\mathbf{R}, \mathcal{W}_\reals, \mathcal{B}_\reals, \mu)$ with fundamental domain $\omega(0,1)$.

 If $G_v$ acts properly and semisimply on $\hyperbolic^n$,
 then we let $(\hyperbolic^n, \mathcal{H}, \mathcal{B}_\hyperbolic, \mu)$ from Example~\ref{exmp:hyperbolicSpace} be the associated wallspace.
 If the edge $e$ in $\Gamma$ is incident to $v$, we let $x_v^e \in \hyperbolic^n$ be a point in the geodesic axis of $G_e$.

 Note that every wall in $\mathcal{H}$ is either disjoint from, or intersects in precisely one point the geodesic containing $x_v^e$, hence Condition~\eqref{ax:funDomain} holds in this case.
 By Lemma~\ref{lem:hyperbolicDispersal}, the $G_e$ is dispersed relative to $x_v^e$ in their geodesic axis.
 If $G_v$ is virtually special, then by Proposition~\ref{prop:DispersedRAAGS} \begin{com}
 	I corrected the result cited after rewriting.
 \end{com}
 there is a CAT(0) cube complex $X_v$ such that each edge group $G_e$ is dispersed relative to a basepoint $x_v^e$ in a combinatorial axis for $G_e$.
  Therefore, Condition~\eqref{ax:dispersed} holds in the respective cases.
 And we let $(X_v, \mathcal{W}_v, \mathcal{B}_v, \mu_v)$ be the continuous wallspace of Example~\ref{exmp:CAT0CubeComplexes}.
 Combinatorial axis are intersected by hyperplanes at most once.
 Hence, as $x_v^e$ is contained in a combinatorial geodesic axis Condition~\eqref{ax:funDomain} holds.

 As described in Examples~\ref{exmp:cont cube axis standard}~and~\ref{exmp:hyperbolic axis standard} the walls intersecting a geodesic axis stabilized by the edge group are $\integers$-standard, and have fundamental domains given by the walls separating two consecutive points in an orbit.
 Hence, for each edge $e$ incident to a vertex $v$, there is a $G_e$-equivariant isomorphism $\phi_v^e$ that satisfies Condition~\eqref{ax:funDomain}. Moreover, we can choose $\phi_v^e$ such that the image $\phi_v^e(\omega(x_e, g_ex_e)) = \omega ( x_v^e, \varphi_v^e(g_e) x_v^e)$, therefore satisfying Condition~\eqref{ax:funDDDomain}.

 Thus $G$ is aTmenable by Theorem~\ref{thm:main}.
\end{proof}

\subsection{Outline of Proof of Theorem~\ref{thm:main}}

The proof of Theorem~\ref{thm:main} is broken up into the following steps:
We describe a homomorphism $G \rightarrow \reals^*$ in Lemma~\ref{lem:modularHomomorphism}.
By Lemma~\ref{lem:aTmen by amen} and the fact that subgroups of $\reals^*$ are amenable, it suffices to show that its kernel $G'$ is aTmenable.
By design, $G'$ has the attractive property that its splitting is monic.
We prove the monic special case of Theorem~\ref{thm:main} by first constructing a measured wallspace $(X,\mathcal{W},\mathcal{B},\mu)$ that $G$ acts on.
Then we show that $G$ acts metrically properly on $(X,\mathcal{W},\mathcal{B},\mu)$ in Proposition~\ref{prop:actionIsMetricallyProper}.
The aTmenability of $G$ then follows from Lemma~\ref{lem:prop measured wallspace action gives atmenable}.

\subsection{The modular homomorphism} \label{sub:modular}

Our initial goal is the following result:

\begin{lem} \label{lem:modularHomomorphism} Let $G$ split as a graph of groups with measured wallspaces satisfying the hypotheses of Theorem~\ref{thm:main}.
There is a homomorphism $G\rightarrow \reals^*$ whose kernel $G'$ is monic.
\end{lem}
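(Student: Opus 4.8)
The plan is to construct the homomorphism $G \to \reals^*$ from the scaling constants $\varrho_v^e$ appearing in Condition~\eqref{ax:edge inclusion}, and then to check that its kernel $G'$ inherits a monic splitting. The intuition is that each edge inclusion $\varphi_v^e : G_e \to G_v$ comes with a prescribed distortion factor $\varrho_v^e$ relating the edge wallspace measure to the induced measure on the walls cutting $G_e x_v^e$ inside $X_v$. As one travels around a loop in the graph of groups, these factors multiply, and the total multiplicative distortion should depend only on the element of $G$ one has traversed, not on the path. This is exactly the data of a homomorphism to the multiplicative group $\reals^*$.

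First I would fix, for each oriented edge $e$ with terminal vertex $v$ and initial vertex $w$, the two isomorphisms $\phi_v^e$ and $\phi_w^e$ from Condition~\eqref{ax:edge inclusion}, each carrying its scaling factor $\varrho_v^e$ and $\varrho_w^e$. The ratio $\varrho_v^e / \varrho_w^e$ measures how the edge wallspace measure gets rescaled when passing from the $w$-side to the $v$-side across $e$. I would then define the homomorphism on the standard generators of $\pi_1(\Gamma, G_\bullet)$: vertex-group elements map to $1$ (since within a single vertex group no rescaling occurs), and each stable/Bass--Serre edge letter $t_e$ maps to the ratio $\varrho_v^e/\varrho_w^e$ (or its inverse, depending on orientation conventions). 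The main verification is that this assignment respects the defining relations of the fundamental group of the graph of groups, namely the relations $t_e \varphi_w^e(g_e) t_e^{-1} = \varphi_v^e(g_e)$: here both sides are edge-group elements whose images are $1$, and the conjugating $t_e$ contributes its factor and its inverse, so the relation is preserved. Because $\reals^*$ is abelian, all that genuinely needs checking is that the edge relations and the vertex relations map consistently, which is a routine diagram-chase through the Bass--Serre presentation.

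Having produced $\theta : G \to \reals^*$, I would set $G' = \ker\theta$ and analyze its induced splitting via the Bass--Serre tree $T$: $G'$ acts on $T$, and by the structure theorem for groups acting on trees it splits as a graph of groups with vertex and edge groups the intersections of $G'$ with conjugates of the $G_v$ and $G_e$. The point is that the edge-inclusion scaling factors for this induced splitting are precisely the factors $\theta$ records, and since every edge letter of $G'$ lies in the kernel, the corresponding ratios $\varrho_v^e/\varrho_w^e$ all equal $1$ along $G'$. Thus each edge inclusion of $G'$ becomes measure-preserving after the mod-zero identification, which is exactly the condition $\varrho_v^e = 1$ of Definition~\ref{defn:monic}. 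I would then confirm that $G'$ still satisfies the remaining hypotheses of Theorem~\ref{thm:main} (infinite cyclic edge groups up to finite index, countability of the Bass--Serre tree of $G'$, and the dispersal Condition~\eqref{ax:dispersed}, the latter surviving because dispersal passes to subgroups and intersections by Lemma~\ref{lem:IntersectionDispersal}).

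The step I expect to be the main obstacle is verifying that the induced splitting of $G'$ is genuinely monic in the precise sense of Definition~\ref{defn:monic}, rather than merely that $\theta$ kills the loop-distortions abstractly. This requires carefully tracking how the measured-wallspace isomorphisms $\phi_v^e$ transform under conjugation by elements of $G$, so that the edge groups of the induced splitting inherit honest $\varrho = 1$ identifications and not just identifications up to an unaccounted-for scalar. A secondary subtlety is ensuring the edge groups of $G'$ remain infinite cyclic (they are finite-index subgroups of conjugates of the $G_e$, hence still infinite cyclic) and that the chosen basepoints $x_v^e$ can be reused, so that the dispersal hypothesis transfers verbatim; these are comparatively routine once the monic identification is pinned down.
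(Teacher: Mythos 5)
Your construction of the homomorphism itself is fine and agrees with the paper's (the paper phrases it as weights $w(e)=\varrho_e^+/\varrho_e^-$ on oriented edges inducing $f:\pi_1\Gamma\to\reals^*$, which is the same map as your presentation-based $\theta$). The genuine gap is exactly the step you flagged as ``the main obstacle'' and then did not carry out: showing that the induced splitting of $G'=\ker\theta$ is monic. Your stated justification --- that ``since every edge letter of $G'$ lies in the kernel, the corresponding ratios $\varrho_v^e/\varrho_w^e$ all equal $1$ along $G'$'' --- does not work. The splitting of $G'$ is over the quotient graph $\widehat\Gamma = T/G'$, which is the cover of $\Gamma$ corresponding to $\ker f$, and each edge--vertex incidence of that splitting inherits a \emph{copy} of the original scaling factor $\varrho_v^e$; these individual factors are in general not equal to $1$, and membership of stable letters in the kernel says nothing about them. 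What passing to the kernel actually buys is only that the product of the ratios $\varrho^+/\varrho^-$ around every loop of $\widehat\Gamma$ equals $1$. Monicity in the sense of Definition~\ref{defn:monic} requires each individual $\varrho=1$, and this requires an additional, genuinely necessary step which is the heart of the paper's proof: choose a basepoint in $\widehat\Gamma$ and use the path-weight from the basepoint as a potential function to \emph{rescale} the measured wallspace $\mu_{\widehat v}$ at each vertex (equivalently, rescale the edge measures compatibly). This rescaling is consistent precisely because all loop-weights in $\widehat\Gamma$ are trivial, and after it every $\varrho_{\widehat e}^{\pm}=1$. Without this rescaling argument your proof establishes only that $\theta$ kills loop-distortions, not that $G'$ admits a monic splitting.

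A secondary inaccuracy: the vertex and edge groups of the induced splitting of $G'$ are \emph{full} conjugates of the $G_v$ and $G_e$, not finite-index subgroups, since every vertex group (hence every edge group, and every conjugate of either) maps to $1$ under $\theta$ and so lies in the kernel. This actually simplifies your final paragraph: the edge groups are literally infinite cyclic copies of the $G_e$, the basepoints $x_v^e$ transfer directly, and dispersal holds verbatim without invoking Lemma~\ref{lem:IntersectionDispersal}.
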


\begin{proof}
We orient the edges of $\Gamma$ to obtain a directed graph.
For an edge $e$ from $u$ to $v$ we use the notation $\varrho_e^+ = \varrho_v^e$ and $\varrho_e^- = \varrho_u^e$.
To each directed edge we associate the nonzero real weight $w(e) = \frac{\varrho_e^{+}}{\varrho_e^{-}}$, and let $w(e^{-1}) = w(e)^{-1}$.
The \emph{modular homomorphism} $ f : \pi_1 \Gamma \rightarrow \reals^*$ is induced by the function that maps a closed combinatorial path $e_{i_1} \cdots e_{i_n}$ to $w(e_{i_1}) \cdots w(e_{i_n})$.
Let $\widehat{\Gamma}$ denote the covering space of $\Gamma$ associated to the kernel of $f$.
Let $G' \leqslant G$ be the subgroup associated to $\widehat{\Gamma}$.

 Assign a copy of the measured wallspace $(X_v, \mathcal{W}_v, \mathcal{B}_v, \mu_v)$ to $\widehat{v}$, where $\widehat{v}$ lies in the fiber of $v \in \Gamma$, and similarly for each edge space and edge space inclusion.
For each edge $\widehat{e}$ incident to a vertex $\widehat{v}$ we let $x_{\widehat{e}}$ be the point corresponding to $x_e \in X_v$, where $\widehat{e}$ lies in the fiber of $e$ and $\widehat{v}$ in the fiber of $v$.
After choosing a basepoint in $\widehat{\Gamma}$ we can scale the measured wallspaces at each vertex so that $\varrho_{\widehat{e}}^{\pm} = 1$ for all $\widehat{e}$ in $\widehat{\Gamma}$.
Each edge group remains infinite cyclic and dispersed in its respective vertex groups.
\end{proof}

\subsection{Constructing the Measured Wallspace}

Assuming that the splitting of $G$ is monic, we construct a measured wallspace $(X, \mathcal{W}, \mathcal{B}, \mu)$ that $G$ acts on.
Let $T$ be the Bass-Serre tree of the splitting of $G$.
For each $\tilde{v} \in T$, we let $(X_{\tilde{v}}, \mathcal{W}_{\tilde{v}}, \mathcal{B}_{\tilde{v}}, \mu_{\tilde{v}})$ be a copy of $(X_v, \mathcal{W}_{v}, \mathcal{B}_{v}, \mu_{v})$ with the associated
$G_{\tilde v}$ action.
Let $X = \bigsqcup_{\tilde{v} \in V(T)} X_{\tilde v}$ be the disjoint union of these \emph{vertex spaces}.
Note that $G$ acts on $X$, and thus on $\bigsqcup_{\tilde{v} \in V(T)} \mathcal{W}_{\tilde{v}}$, and $\bigsqcup_{\tilde{v} \in V(T)} \mathcal{B}_{\tilde{v}}$ such that $\mu_{\tilde{v}}(U) = \mu_{g\tilde{v}}(gU)$ for $U \in \mathcal{B}_{\tilde{v}}$ and $g \in G$.

 For each edge $e$ of $\Gamma$ \begin{com} Added ``of $\Gamma$''\end{com} with endpoints $u, v$ choose a lift $\tilde e_o$ and let $\tilde u_o, \tilde v_o$ denote its endpoints in $T$.
 Our identification of $X_{\tilde u_o}$ with $X_u$ allows us to choose a point $\tilde x_{\tilde u_o}^{\tilde e_o} \in X_{\tilde u_o}$ corresponding to $x_u^e$.
 For each coset $gG_e$ we fix the representative $g \in G$ and let $x_{g \tilde u_o}^{g \tilde e_o}$ equal $g x_{\tilde u_o}^{\tilde e_o}$.
 We likewise define each $ x_{g\tilde v_o}^{g \tilde e_o}$.
 Having made these choices, for an edge $\tilde e$ incident to a vertex $\tilde v$, Condition~\eqref{ax:dispersed} ensures the orbits $\{ g G_{\tilde{e}} x_{\tilde{v}}^{\tilde{e}} \}_{g \in G_{\tilde v}}$ are dispersed in $(X_{\tilde{v}}, \mathcal{W}_{\tilde{v}}, \mathcal{B}_{\tilde{v}}, \mu_{\tilde{v}})$.

In a similar vein, for each edge $\tilde e$ in $T$ projecting to an edge $e$ in $\Gamma$ we let $(X_{\tilde e}, \mathcal{W}_{\tilde e}, \mathcal{B}_{\tilde e}, \mu_{\tilde e})$ be a copy of $(X_e, \mathcal{W}_e, \mathcal{B}_e, \mu_e)$.
If $\tilde e$ is incident to $\tilde v$, then we let $\phi_{\tilde{v}}^{\tilde{e}}$ be a copy of $\phi_v^e$ mapping $(\mathcal{W}_{\tilde e}, \mathcal{B}_{\tilde e}, \mu_{\tilde e})$ isomorphically to $(\mathcal{W}_{\tilde v}^{\tilde e}, \mathcal{B}_{\tilde v}^{\tilde e}, \mu_{\tilde v})$.
The action of $G$ satisfies $g\phi_{\tilde v}^{\tilde e} = \phi_{g \tilde v}^{g \tilde e}$.

Each edge $\tilde e$ of $T$ determines a \emph{vertical wall} whose halfspaces are the sets of vertex spaces corresponding to the two sets of vertices separated by $\tilde e$.

We shall now define an equivalence relation $\sim$ on $\bigsqcup_{\tilde v \in V(\Gamma)} \mathcal{W}_{\tilde v}$.
Let $\tilde{e}$ be an edge in $T$ joining $\tilde u$ to $\tilde v$, and let $\Lambda_1 \in \mathcal{W}_{\tilde u}$ and $\Lambda_2 \in \mathcal{W}_{\tilde{v}}$, then $\Lambda_1 \sim \Lambda_2$  if there is $\Lambda \in \mathcal{W}_{\tilde{e}}$ such that $\phi^{\tilde{e}}_{\tilde{u}}(\Lambda) = \Lambda_1$ and $\phi^{\tilde{e}}_{\tilde{v}}(\Lambda) = \Lambda_2$.
More specifically there is a correspondence between the halfspaces of $\Lambda_1$ and the halfspaces of $\Lambda_2$, determined by declaring two halfspace to be in correspondence precisely when they both contain all sufficiently large positive or negative translates $\varphi_{\tilde v}^{\tilde e}(g_{\tilde{e}}^m) x_{\tilde v}^{\tilde e}$ and $\varphi_{\tilde v}^{\tilde e}(g_{\tilde{e}}^m) x_{\tilde u}^{\tilde e}$.
Any wall that belongs to the equivalence class of a wall in the null set excluded from the isomorphisms $\phi^{\tilde e}_{\tilde{u}}, \phi^{\tilde e}_{\tilde{v}}$ is discarded, and referred to as a \emph{discarded wall}.
We also discard any walls that skim an orbit $G_{\tilde e}x_{\tilde v}^{\tilde e}$.
Condition~\eqref{ax:countableVertices} ensures that
 $V(T)$ is countable and so we deduce that the set of discarded walls in any $\mathcal{W}_{\tilde{v}}$ is a nullset.
As $T$ is a tree, each equivalence class contains at most one wall in each $\mathcal{W}_{\tilde{v}}$.
 Let $T_{\Lambda}$ be the subtree of $T$ spanned by the vertices $\tilde{v}$ such that $X_{\tilde{v}}$ contains a wall in the equivalence class of $\Lambda$.

The \emph{horizontal walls} are constructed from these equivalence classes.
If $\Lambda$ is a wall in $\mathcal{W}_{\tilde{v}}$ that has not been discarded, then the wall corresponding to its equivalence class partitions $X_{\tilde{v}}$ by taking the union of the corresponding equivalence classes of halfspaces for all $\tilde v \in V(T_\Lambda)$.
If $\tilde{v} \notin V(T_\Lambda)$, then let $\tilde{u}$ be the closest vertex in $T_{\Lambda}$ to $\tilde{v}$, and $\tilde e$ the edge separating them.
Let $\Lambda' \in \mathcal{W}_{\tilde{u}}$ such that $\Lambda' \sim \Lambda$.
Then  $G_{\tilde{e}} x_{\tilde{v}}^{\tilde{e}} \subset X_{\tilde{u}}$ is disjoint from $\Lambda'$ and hence is contained in either the left or right halfspace of $\Lambda' \in \mathcal{W}_{\tilde{u}}$.
Accordingly, $X_{\tilde{v}}$ is added to the left or right halfspace.

Let $\mathcal W^\hh$ and $\mathcal W^\vv$ denote the horizontal and vertical walls.
Let $\mathcal W=\mathcal W^\hh \sqcup \mathcal W^\vv$ denote the set of all walls.
There is a natural map, modulo the nullset of discarded walls, $\mathcal{W}_{\tilde{v}} \hookrightarrow \mathcal{W}^{\hh}$ that takes a wall to the horizontal wall constructed from the equivalence class containing it.

Define the measurable subsets $\mathcal{B}^{\hh}$ of $\mathcal W^\hh$ to be the $\sigma$-algebra generated by
the inclusion of the elements of $\mathcal{B}_{\tilde{v}}$ into $\mathcal{W}^{\hh}$.
Define the measurable subsets $\mathcal{B}^{\vv}$ of $\mathcal W^\vv$ to be the collection of all subsets of vertical walls.
Let $\mathcal W^\hh_{\tilde v}$ denote the image of the embedding of $\mathcal W_{\tilde v}$ in $\mathcal{W}^{\hh}$.
Let $\mathcal{B}$ be the $\sigma$-algebra generated by $\mathcal{B}^{\vv} \cup \mathcal{B}^{\hh}$.

Let $\{ \tilde{e}_1, \tilde{e}_2 \ldots \}$ be an enumeration of representatives of the edge orbits.
If $\Lambda_{g\tilde{e}_i}$ is the vertical wall corresponding to $g\tilde{e}_i$ then let $\mu^{\vv}(\Lambda_{g\tilde{e}_i}) = i$.


The measure on $\mathcal{B}^\hh$ is defined as follows:
Choose an enumeration $\{\tilde{v}_1, \tilde{v}_2, \ldots \}$ of the vertices of $T$.
Given a measurable subset $U \in \mathcal{B}^\hh$, we partition it as
$$U = \bigsqcup_{i=1}^\infty U_i,$$
where we recursively define
 \[ U_i = \mathcal W^\hh_{v_i} \cap \big(U - \cup_{j=1}^{i-1} U_j \big).
 \]
\noindent Note each $U_i$ is measurable because each $\mathcal W^\hh_{v_i}$ is measurable.

We define the measure $\mu^\hh$ as follows:
\begin{equation}\label{eq:measuring U}
\mu^\hh(U)=\sum_{i=1}^\infty \mu_i(U_i)
\end{equation}
Observe that $\mu^\hh$ is a measure because firstly $\mu^\hh(\emptyset)=0$, and secondly if $\{A_i\}_{i\in \naturals}$ is a sequence of disjoint measurable sets,
then $\mu^\hh(A)=\sum_{i=1}^\infty \mu^\hh(A_i)$ by rearranging the sum in Equation~\eqref{eq:measuring U}.

Finally, let $\mu = \mu^\hh + \mu^\vv$.

\begin{lem}\label{lem:key point}
Let $A$ be a measurable set of walls such that $A\subset \mathcal W^\hh_{\tilde{u}}$ and $A\subset \mathcal W^\hh_{\tilde{v}}$
for vertices $\tilde{u},\tilde{v}$ of $T$.
Then $\mu_{\tilde{u}}(A) = \mu_{\tilde{v}}(A)$.
\end{lem}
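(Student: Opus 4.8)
The plan is to induct on the tree-distance $d=\dist_T(\tilde u,\tilde v)$, reducing everything to the case where $\tilde u$ and $\tilde v$ are joined by a single edge, and then to exploit that in the monic case the edge isomorphisms $\phi_{\tilde v}^{\tilde e}$ are measure-preserving. Throughout, for a vertex $\tilde w$ with $A\subseteq \mathcal W^\hh_{\tilde w}$ I write $\hat A_{\tilde w}\subseteq \mathcal W_{\tilde w}$ for the preimage of $A$ under the (mod-zero) embedding $\mathcal W_{\tilde w}\hookrightarrow \mathcal W^\hh$, so that by definition $\mu_{\tilde w}(A)$ means $\mu_{\tilde w}(\hat A_{\tilde w})$; this preimage is measurable since $\mathcal B^\hh$ is generated by the images of the $\mathcal B_{\tilde w}$.

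First I would establish the reduction. Fix $\Lambda\in A$. Since $A\subseteq \mathcal W^\hh_{\tilde u}\cap \mathcal W^\hh_{\tilde v}$, both $\tilde u$ and $\tilde v$ lie in $V(T_\Lambda)$. As $T_\Lambda$ is a subtree of $T$, and in particular connected, the geodesic $[\tilde u,\tilde v]$ is contained in $T_\Lambda$; hence every vertex and every edge on this geodesic lies in $T_\Lambda$. Because this holds for each $\Lambda\in A$, any intermediate vertex $\tilde w$ on the geodesic satisfies $A\subseteq \mathcal W^\hh_{\tilde w}$. Taking $\tilde w$ to be the neighbour of $\tilde u$ on $[\tilde u,\tilde v]$, the single-edge case will give $\mu_{\tilde u}(A)=\mu_{\tilde w}(A)$, while the inductive hypothesis applied to the pair $\tilde w,\tilde v$ at distance $d-1$ gives $\mu_{\tilde w}(A)=\mu_{\tilde v}(A)$, completing the induction once the base case $d=1$ is known.

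For the base case, let $\tilde e$ be the edge joining $\tilde u$ and $\tilde v$. For each $\Lambda\in A$ we have $\tilde e\in T_\Lambda$, so by the definition of the equivalence relation $\sim$ there is a wall $\Lambda_e\in \mathcal W_{\tilde e}$ whose images $\phi^{\tilde e}_{\tilde u}(\Lambda_e)$ and $\phi^{\tilde e}_{\tilde v}(\Lambda_e)$ are exactly the representatives of the class of $\Lambda$ in $\mathcal W_{\tilde u}$ and $\mathcal W_{\tilde v}$. Consequently, modulo the nullset of discarded walls, there is a common measurable set $A_e\subseteq \mathcal W_{\tilde e}$ with $\hat A_{\tilde u}=\phi^{\tilde e}_{\tilde u}(A_e)$ and $\hat A_{\tilde v}=\phi^{\tilde e}_{\tilde v}(A_e)$. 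Since the splitting is monic, $\varrho^{\tilde e}_{\tilde u}=\varrho^{\tilde e}_{\tilde v}=1$, so by Condition~\eqref{ax:edge inclusion} (in the form used in the construction) both $\phi^{\tilde e}_{\tilde u}$ and $\phi^{\tilde e}_{\tilde v}$ carry $(\mathcal W_{\tilde e},\mu_{\tilde e})$ isomorphically mod-zero onto their images and are therefore measure-preserving. Hence $\mu_{\tilde u}(A)=\mu_{\tilde u}(\hat A_{\tilde u})=\mu_{\tilde e}(A_e)=\mu_{\tilde v}(\hat A_{\tilde v})=\mu_{\tilde v}(A)$.

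I expect the main obstacle to be the two points underlying the reduction rather than the adjacent case itself, which is essentially formal once monicity is invoked. First, one must be sure that the geodesic between two vertices of $T_\Lambda$ genuinely remains inside $T_\Lambda$, so that $\Lambda$ extends across every intermediate edge and the intermediate vertices satisfy $A\subseteq \mathcal W^\hh_{\tilde w}$; this is exactly the connectedness of $T_\Lambda$. Second, one must check the nullset bookkeeping: passing from $\tilde u$ to $\tilde v$ accumulates one discarded-wall nullset per edge of the geodesic, but this is a finite (indeed, by Condition~\eqref{ax:countableVertices}, at worst countable) union of nullsets and so does not affect any of the measures computed above.
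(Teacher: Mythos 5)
Your proof is correct and follows essentially the same route as the paper's: walk along the geodesic from $\tilde u$ to $\tilde v$, note that $A\subset \mathcal W^\hh_{\tilde w}$ at every intermediate vertex $\tilde w$, and compare adjacent vertices through the edge wallspace using monicity ($\varrho^{\pm}_{\tilde e}=1$). The paper's proof is a terser version of exactly this argument, and your added details (connectedness of $T_\Lambda$, the common pullback $A_e$ in $\mathcal W_{\tilde e}$, and the nullset bookkeeping) merely fill in the steps it leaves implicit.
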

\begin{proof}
Let $\tilde{u}=\tilde{v}_0, \tilde{v}_1,\ldots, \tilde{v}_n=\tilde{v}$ be the sequence of vertices on a geodesic path $\tilde{e}_1\tilde{e}_2\cdots \tilde{e}_n$ in $T$ from $\tilde{u}$ to $\tilde{v}$.
Observe that $A\subset \mathcal W^\hh_{\tilde{v}_i}$ for each $i$.
Then note that $\mu_{\tilde{v}_i}(A)=\mu_{\tilde{v}_{i+1}}(A)$ since the splitting of $G$ is monic and hence $\varrho_{\tilde{e}_i}^{\pm} = 1$.
\end{proof}

\begin{lem}\label{lem:welldefined}
The measure $\mu^\hh$ is well-defined, in the sense that it does not depend on the enumeration of the vertices.
\end{lem}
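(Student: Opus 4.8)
The plan is to compare any two enumerations through their common refinement and to reduce every equality to Lemma~\ref{lem:key point}. Fix a measurable $U \in \mathcal{B}^\hh$ and two enumerations $\{\tilde v_i\}_{i\in\naturals}$ and $\{\tilde w_j\}_{j\in\naturals}$ of $V(T)$. Each produces a partition $U=\bigsqcup_i U_i$ and $U=\bigsqcup_j U'_j$, with $U_i \subseteq \mathcal{W}^\hh_{\tilde v_i}$ and $U'_j \subseteq \mathcal{W}^\hh_{\tilde w_j}$, so that the two candidate values are $\sum_i \mu_{\tilde v_i}(U_i)$ and $\sum_j \mu_{\tilde w_j}(U'_j)$, where $\mu_{\tilde v_i}$ (resp.\ $\mu_{\tilde w_j}$) is computed by pulling a subset of $\mathcal{W}^\hh_{\tilde v_i}$ back through the isomorphism $\mathcal{W}_{\tilde v_i}\cong \mathcal{W}^\hh_{\tilde v_i}$ and applying the vertex measure. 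The goal is to show these two sums agree.

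First I would form the refinement $U_{ij} := U_i \cap U'_j$. Since $U_i$ and $U'_j$ are measurable, each $U_{ij}$ is measurable, and restricting the two partitions gives the disjoint countable decompositions $U_i = \bigsqcup_j U_{ij}$ and $U'_j = \bigsqcup_i U_{ij}$. The crucial point is that $U_{ij} \subseteq \mathcal{W}^\hh_{\tilde v_i}$ and simultaneously $U_{ij} \subseteq \mathcal{W}^\hh_{\tilde w_j}$, so $U_{ij}$ is a measurable set of walls lying in both vertex images. Lemma~\ref{lem:key point} then yields the key identity $\mu_{\tilde v_i}(U_{ij}) = \mu_{\tilde w_j}(U_{ij})$ for every pair $(i,j)$.

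Finally I would chain together countable additivity and a rearrangement of nonnegative terms:
\begin{align*}
\sum_i \mu_{\tilde v_i}(U_i)
&= \sum_i \sum_j \mu_{\tilde v_i}(U_{ij})
 = \sum_i \sum_j \mu_{\tilde w_j}(U_{ij}) \\
&= \sum_j \sum_i \mu_{\tilde w_j}(U_{ij})
 = \sum_j \mu_{\tilde w_j}(U'_j).
\end{align*}
Here the first and last equalities use that each $\mu_{\tilde v_i}$ and each $\mu_{\tilde w_j}$ is a measure applied to the disjoint decompositions $U_i = \bigsqcup_j U_{ij}$ and $U'_j = \bigsqcup_i U_{ij}$; the middle equality is Lemma~\ref{lem:key point}; and the interchange of the order of summation is justified by Tonelli's theorem for double series of nonnegative terms. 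This shows the two enumerations yield the same value, so $\mu^\hh$ is well-defined.

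I expect the only genuine content to be the verification that Lemma~\ref{lem:key point} applies to every refinement piece, that is, that $U_{ij}$ really sits inside both $\mathcal{W}^\hh_{\tilde v_i}$ and $\mathcal{W}^\hh_{\tilde w_j}$, together with the bookkeeping that $\{U_{ij}\}$ refines both partitions at once. Everything else is routine: the rearrangement of the sums is unconditional because all terms are nonnegative, so no convergence hypotheses are needed.
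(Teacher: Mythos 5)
Your proof is correct and is essentially the paper's own argument: both form the common refinement $U_i \cap U'_j$ of the two partitions, apply Lemma~\ref{lem:key point} to each refinement piece (which lies in both vertex images), and conclude by rearranging the resulting nonnegative double sum. The only difference is that you spell out the justifications (measurability of the pieces and Tonelli for nonnegative series) that the paper leaves implicit.
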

\begin{proof}
Suppose $U=U_1\sqcup U_2 \sqcup U_3 \cdots$ is a decomposition of a measurable set $U$ with respect to an enumeration $\{ \tilde u_1, \tilde u_2, \ldots \}$,
and
$U=V_1\sqcup V_2 \sqcup V_3 \cdots$ is a decomposition with respect to $\{ \tilde v_1, \tilde v_2, \ldots \}$.
(For brevity we use the notation $\mu_i = \mu_{\tilde u_i}$ and $\mu_j = \mu_{\tilde v_j}$.)
The claim that $\sum \mu_i(U_i)=\sum \mu_j(V_j)$ follows from the following equation where the second equality holds by Lemma~\ref{lem:key point}.
$$\sum_i \mu_i(U_i) =  \sum_i\sum_j \mu_i(U_i\cap V_j) = \sum_i\sum_j \mu_j(U_i\cap V_j) = \sum_j\sum_i \mu_j(U_i \cap V_j) = \sum_j \mu_j(V_j)  \ \ \qedhere$$
\end{proof}

\begin{lem}
The measure $\mu^\hh$ is $G$-invariant.
\end{lem}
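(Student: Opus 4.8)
The plan is to show that $\mu^{\hh}$ is preserved under the action of any $g \in G$, using the fact that the measure is defined via a choice of vertex enumeration together with the $G$-equivariance already established for the vertex spaces and their isomorphisms. First I would recall the key equivariance property $\mu_{\tilde v}(U) = \mu_{g\tilde v}(gU)$ for $U \in \mathcal B_{\tilde v}$, which was built into the construction of the vertex spaces, together with the fact that $g$ permutes the horizontal walls via the natural embeddings $\mathcal W_{\tilde v} \hookrightarrow \mathcal W^{\hh}$, so that $g(\mathcal W^{\hh}_{\tilde v}) = \mathcal W^{\hh}_{g\tilde v}$.

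The core of the argument exploits that $\mu^{\hh}$ is independent of the chosen enumeration of vertices, which is exactly Lemma~\ref{lem:welldefined}. The plan is as follows. Fix $g \in G$ and a measurable set $U \in \mathcal B^{\hh}$. Choose an enumeration $\{\tilde v_1, \tilde v_2, \ldots\}$ of $V(T)$ and compute $\mu^{\hh}(gU)$ using the \emph{translated} enumeration $\{g\tilde v_1, g\tilde v_2, \ldots\}$, which is again a valid enumeration of $V(T)$ since $g$ acts as a bijection on the vertex set. With respect to this translated enumeration, the recursively-defined pieces of $gU$ are precisely the translates of the pieces of $U$: if $U_i = \mathcal W^{\hh}_{\tilde v_i} \cap (U - \cup_{j<i} U_j)$, then the corresponding piece of $gU$ is $g U_i = \mathcal W^{\hh}_{g\tilde v_i} \cap (gU - \cup_{j<i} gU_j)$, using that $g$ commutes with intersections, unions, and complements and carries $\mathcal W^{\hh}_{\tilde v_i}$ to $\mathcal W^{\hh}_{g\tilde v_i}$.

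Having identified the pieces, the computation reads
\begin{equation*}
\mu^{\hh}(gU) = \sum_{i=1}^\infty \mu_{g\tilde v_i}(gU_i) = \sum_{i=1}^\infty \mu_{\tilde v_i}(U_i) = \mu^{\hh}(U),
\end{equation*}
where the first equality is the definition of $\mu^{\hh}(gU)$ with respect to the enumeration $\{g\tilde v_i\}$, the middle equality is the equivariance $\mu_{g\tilde v_i}(gU_i) = \mu_{\tilde v_i}(U_i)$, and the last equality is the definition of $\mu^{\hh}(U)$ with respect to $\{\tilde v_i\}$. The crucial point justifying that the left-hand side genuinely computes $\mu^{\hh}(gU)$ is Lemma~\ref{lem:welldefined}: the value of $\mu^{\hh}$ does not depend on the enumeration chosen, so we are free to evaluate it using $\{g\tilde v_i\}$.

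I expect the main obstacle to be purely bookkeeping: verifying carefully that $g$ does send the piece $U_i$ to the piece $gU_i$ computed with the translated enumeration, which amounts to checking that $g$ respects the recursive set-theoretic operations and the identification $g\mathcal W^{\hh}_{\tilde v} = \mathcal W^{\hh}_{g\tilde v}$. This last identification in turn relies on the earlier-noted compatibility $g\phi_{\tilde v}^{\tilde e} = \phi_{g\tilde v}^{g\tilde e}$ of the edge isomorphisms with the $G$-action, which ensures that $g$ maps equivalence classes of walls to equivalence classes and hence horizontal walls to horizontal walls consistently. Once this equivariance of the embeddings is in hand, the measure-theoretic identity is immediate from $G$-equivariance of each $\mu_{\tilde v}$ combined with the enumeration-independence from Lemma~\ref{lem:welldefined}.
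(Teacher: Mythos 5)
Your proposal is correct and follows essentially the same route as the paper: compute the measure via the translated enumeration $\{g\tilde v_i\}$, use the built-in equivariance $\mu_{\tilde v_i}(U_i) = \mu_{g\tilde v_i}(gU_i)$ of the vertex measures, and invoke Lemma~\ref{lem:welldefined} to identify the result with $\mu^{\hh}$ of the translate. Your extra bookkeeping check that the recursive pieces of $gU$ relative to $\{g\tilde v_i\}$ are exactly the translates $gU_i$ is left implicit in the paper, so including it is a harmless refinement rather than a divergence.
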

\begin{proof}
This holds from the following equation. \begin{com}CORRECTION: THis used to say ``The holds''. \end{com}
Its first equality follows from the definition with respect to an enumeration $\{\tilde u_1, \tilde u_2, \ldots\}$.
Its second equality follows from the fact that $\mu_{\tilde u_i}(U_i) = \mu_{g \tilde u_i}(gU_i)$.
Its final equality follows by defining a second enumeration $\tilde v_i = g \tilde u_i$ and applying Lemma~\ref{lem:welldefined}.
$$\mu^\hh(U) = \sum_i \mu_{\tilde u_i}(U_i) = \sum_i \mu_{g \tilde u_i}(gU_i) = \mu^\hh(gU) \hfill \qedhere$$
\end{proof}

%
%
%

\begin{lem} \label{lem:separatingWallsAreMeasurableSet}
 Let $a, b \in X$, then $\omega( a,  b) \in \mathcal{B}$ and $\mu(\omega(a,b)) < \infty$.
\end{lem}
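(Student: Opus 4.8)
The plan is to fix $a\in X_{\tilde a}$ and $b\in X_{\tilde b}$, to let $\tilde v_0=\tilde a,\tilde v_1,\dots,\tilde v_n=\tilde b$ be the geodesic in $T$ with edges $\tilde e_1,\dots,\tilde e_n$, and to treat the two kinds of walls separately via $\omega(a,b)=\big(\omega(a,b)\cap\mathcal W^\vv\big)\sqcup\big(\omega(a,b)\cap\mathcal W^\hh\big)$. The vertical part is immediate: the vertical wall of an edge $\tilde e$ separates $a$ and $b$ exactly when $\tilde e$ lies on $[\tilde a,\tilde b]$, so $\omega(a,b)\cap\mathcal W^\vv=\{\Lambda_{\tilde e_1},\dots,\Lambda_{\tilde e_n}\}$ is finite. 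It is measurable because $\mathcal B^\vv$ consists of all subsets of $\mathcal W^\vv$, and its $\mu^\vv$-measure is the finite sum of the weights of $\tilde e_1,\dots,\tilde e_n$. Thus everything reduces to the horizontal walls.

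For the horizontal walls I would telescope along the geodesic through the ``ports'' $x_{\tilde v_i}^{\tilde e_i}$ and $x_{\tilde v_i}^{\tilde e_{i+1}}$, using the finite chain of points
$$a,\; x_{\tilde v_0}^{\tilde e_1}\,\big|\,x_{\tilde v_1}^{\tilde e_1},\, x_{\tilde v_1}^{\tilde e_2}\,\big|\,\cdots\,\big|\,x_{\tilde v_n}^{\tilde e_n},\; b,$$
in which each consecutive pair either lies in a common vertex space $X_{\tilde v_i}$ or is a pair of ports $x_{\tilde v_i}^{\tilde e_{i+1}},x_{\tilde v_{i+1}}^{\tilde e_{i+1}}$ straddling the edge $\tilde e_{i+1}$. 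The key step is to show that no (non-discarded) horizontal wall separates such a straddling pair: the halfspace correspondence built into the relation $\sim$, together with Conditions~\eqref{ax:funDomain} and~\eqref{ax:funDDDomain}, forces the two $0$-th ports to lie in corresponding, hence globally equal, halfspaces. This must be checked in the three cases according to whether $\tilde v_i,\tilde v_{i+1}$ lie in $T_\Lambda$, using that skimming walls are discarded (Condition~\eqref{ax:skim measure zero}) so that every orbit is either cut by $\Lambda$ or disjoint from it. Granting this, the elementary wallspace fact that a wall separating the ends of a chain must separate some consecutive pair gives, modulo the null set of discarded walls,
$$\omega(a,b)\cap\mathcal W^\hh \;\subseteq\; \widehat A_0 \cup \bigcup_{i=1}^{n-1}\widehat A_i \cup \widehat A_n,$$
where $\widehat A_i$ is the image in $\mathcal W^\hh$ of the in-vertex set $A_i$ of walls of $X_{\tilde v_i}$ separating the two relevant ports (with $a$, resp.\ $b$, in place of a port at the two ends). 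Each $A_i$ lies in $\mathcal B_{\tilde v_i}$ with finite measure $\#_{X_{\tilde v_i}}$, and for a set supported in one vertex space $\mu^\hh$ agrees with $\mu_{\tilde v_i}$ by Lemma~\ref{lem:key point}, so $\mu^\hh\big(\omega(a,b)\cap\mathcal W^\hh\big)\le\sum_{i=0}^n \#_{X_{\tilde v_i}}<\infty$.

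It remains to show $\omega(a,b)\cap\mathcal W^\hh$ is genuinely a member of $\mathcal B^\hh$, not merely a subset of a finite-measure set, since $\mathcal B^\hh$ need not be complete. For this I would partition the separating horizontal walls according to the interval $[\tilde v_j,\tilde v_k]=T_\Lambda\cap[\tilde a,\tilde b]$, of which there are finitely many. For fixed $j\le k$, the condition $T_\Lambda\cap[\tilde a,\tilde b]=[\tilde v_j,\tilde v_k]$ is expressed by requiring the representative $\Lambda^{(j)}\in\mathcal W_{\tilde v_j}$ to be disjoint from the orbit toward $\tilde v_{j-1}$ while cutting the orbit toward $\tilde v_{j+1}$, and symmetrically at $\tilde v_k$; these are measurable since the walls cutting, skimming, or disjoint from a countable orbit form measurable sets. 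On such a piece the global sides of $a$ and $b$ become measurable functions of $\Lambda^{(j)}$ and $\Lambda^{(k)}$ (the side of $a$ is the side of $\Lambda^{(j)}$ containing the disjoint orbit toward $\tilde v_{j-1}$ when $j>0$, or the side of $a$ when $j=0$, and symmetrically for $b$), and the two representatives are matched by the measurable isomorphisms $\phi_{\tilde v}^{\tilde e}$ along the subpath from $\tilde v_j$ to $\tilde v_k$. Transporting the condition at $\tilde v_k$ back to $\mathcal W_{\tilde v_j}$ through this finite chain of isomorphisms exhibits each piece as the image of a measurable subset of $\mathcal B_{\tilde v_j}$, and the finite union lands in $\mathcal B^\hh$. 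Together with the vertical part this yields $\omega(a,b)\in\mathcal B$ and $\mu(\omega(a,b))<\infty$.

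I expect the edge-crossing consistency of Step two to be the conceptual crux, since it is precisely where Conditions~\eqref{ax:funDomain}, \eqref{ax:funDDDomain} and the discarding of skimming walls are used to make the ports match up; the measurability bookkeeping of the last step is the most tedious point, forced by the non-completeness of $\mathcal B^\hh$, while the vertical count and the measure estimate are routine.
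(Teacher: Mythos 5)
Your proposal is correct in substance, and its core is exactly the paper's: the same chain of points $a,\; x_{\tilde v_0}^{\tilde e_1},\, x_{\tilde v_1}^{\tilde e_1},\, x_{\tilde v_1}^{\tilde e_2},\, \ldots,\, x_{\tilde v_n}^{\tilde e_n},\, b$, and the same crux, namely that no horizontal wall separates a straddling pair $x_{\tilde v_{i-1}}^{\tilde e_i}, x_{\tilde v_i}^{\tilde e_i}$ (the paper proves this by noting that such a wall would have representatives $\Lambda_{i-1},\Lambda_i$ lying in translates $\varphi_{\tilde v_{i-1}}^{\tilde e_i}(g_{\tilde e_i}^{p})\,\omega(x,\varphi(g_{\tilde e_i})x)$ and $\varphi_{\tilde v_i}^{\tilde e_i}(g_{\tilde e_i}^{q})\,\omega(x,\varphi(g_{\tilde e_i})x)$ with $q<0\leq p$, contradicting Condition~\eqref{ax:funDDDomain}). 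Where you diverge is the measurability endgame, and here the paper is much lighter: it observes that $\omega(a,b)$ is \emph{exactly} the set of walls separating an odd number of consecutive pairs of the chain, i.e.\ the finite symmetric difference $\omega(p_0,p_1)\,\triangle\,\omega(p_1,p_2)\,\triangle\cdots\triangle\,\omega(p_{m-1},p_m)$; since each within-vertex set lies in $\mathcal{B}$ by construction and each straddling set is a single vertical wall (by the crux), this yields $\omega(a,b)\in\mathcal{B}$ and $\mu(\omega(a,b))<\infty$ in one stroke. Your worry about non-completeness of $\mathcal{B}^\hh$ is legitimate so long as you only retain the inclusion $\omega(a,b)\cap\mathcal{W}^\hh\subseteq\bigcup_i\widehat A_i$, but it evaporates once that inclusion is upgraded to the parity identity, which follows from nothing beyond your own Step two. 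Your substitute third step --- partitioning by $T_\Lambda\cap[\tilde a,\tilde b]$ and transporting cut/disjoint and side conditions through the chain of mod-zero isomorphisms --- does appear completable (all the conditions involved are countable Boolean combinations of measurable ones, since the edge groups are countable), but it is substantially heavier, is only sketched at its delicate points (tracking halfspace correspondences through compositions of mod-zero isomorphisms), and is precisely the bookkeeping that the odd-crossing observation renders unnecessary.
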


\begin{proof}
  If ${a}$ and ${b}$ are in the same vertex space then $\omega({a}, {b}) \in \mathcal{B}$ by construction.
  Otherwise, consider the geodesic in $T$ between the vertices $\tilde{v}_0, \tilde{v}_n$ that ${a}, {b}$ project to.
  Let $\tilde{v}_0, \tilde{v}_1 ,\ldots, \tilde{v}_n$ be the corresponding sequence of vertices in $T$, and let $\tilde{e}_1, \tilde{e}_2,\ldots, \tilde{e}_n$ be the connecting edges, and assume for notational purposes that each $\tilde{e}_i$ is directed from $\tilde{v}_{i-1}$ to $\tilde{v}_i$.
  We thus have the following sequence of points
    where $x_{\tilde{v}_{i-1}}^{\tilde{e}_i} \in X_{\tilde{v}_{i-1}}$ and $x_{\tilde{v}_i}^{\tilde{e}_i} \in X_{\tilde{v}_{i}}$.
  $$a, x_{\tilde{v}_0}^{\tilde{e}_1}, x_{\tilde{v}_1}^{\tilde{e}_1}, x_{\tilde{v}_1}^{\tilde{e}_2}, x_{\tilde{v}_2}^{\tilde{e}_2}, \ldots , x_{\tilde{v}_{n-1}}^{\tilde{e}_n}, x_{\tilde{v}_n}^{\tilde{e}_n}, b$$
  The set $\omega(a,b)$ consists of walls in $\mathcal{W}$ separating an odd number of consecutive elements of the sequence.
   By construction, $\omega(a,x_{\tilde{v}_0}^{\tilde{e}_1})\in \mathcal{B}$, and
        $\omega(x_{\tilde{v}_i}^{\tilde{e}_i} , x_{\tilde{v}_i}^{\tilde{e}_{i+1}})\in \mathcal{B}$, and $\omega(x_{\tilde{v}_n}^{\tilde{e}_n}, b) \in \mathcal{B}$.
  We claim that $\omega(x_{\tilde{v}_{i-1}}^{\tilde{e}_i},x_{\tilde{v}_i}^{\tilde{e}_{i}})$ consists of a single vertical wall.
   Indeed suppose $\omega(x_{\tilde{v}_{i-1}}^{\tilde{e}_i},x_{\tilde{v}_i}^{\tilde{e}_{i}})$ contained a horizontal wall $\Lambda$.
   Then $\Lambda$ would have been constructed an equivalence class containing walls $\Lambda_{i-1} \in \mathcal{W}_{\tilde v_{i-1}}$ and $\Lambda_i \in \mathcal{W}_{\tilde v_i}$ such that $x_{\tilde v_{i-1}}^{\tilde e_i}$ is in the left halfspace of $\Lambda_{i-1}$ but $x_{\tilde v_i}^{\tilde e_i}$ is in the right halfspace of $\Lambda_i$ (or vice versa).
   Thus $\Lambda_{i-1} \in \varphi_{\tilde v_{i-1}}^{\tilde e_i}(g^{p}_{\tilde e_i}) \omega( x_{\tilde v_{i-1}}^{\tilde e_i}, \varphi_{\tilde v_{i-1}}^{\tilde e_i}(g_{\tilde e_i}) x_{\tilde v_{i-1}}^{e_i})$, and $\Lambda_{i} \in \varphi_{\tilde v_i}^{\tilde e_i}(g_{\tilde e_i}^q) \omega( x_{\tilde v_{i}}^{\tilde e_i}, \varphi_{\tilde v_i}^{\tilde e_i}(g_{\tilde e_i}) x_{\tilde v_{i}}^{\tilde e_i})$ for some $q < 0 \leq p$ (or $p < 0 \leq q)$.
  Hence $\Lambda_{i-1} \sim \Lambda_i$ would contradict Condition~\eqref{ax:funDDDomain}.
\end{proof}

\begin{cor}
 $(X, \mathcal{W}, \mathcal{B}, \mu)$ is a measured wallspace with an isometric $G$-action.
\end{cor}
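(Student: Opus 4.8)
This corollary follows by assembling the lemmas established throughout the construction. The set $X = \bigsqcup_{\tilde v \in V(T)} X_{\tilde v}$, the walls $\mathcal{W} = \mathcal{W}^{\hh} \sqcup \mathcal{W}^{\vv}$, the $\sigma$-algebra $\mathcal{B}$ generated by $\mathcal{B}^{\hh} \cup \mathcal{B}^{\vv}$, and the function $\mu = \mu^{\hh} + \mu^{\vv}$ have all been constructed above, and by construction each horizontal and each vertical wall is a genuine partition of $X$ into two halfspaces. What remains is to verify the measured wallspace axioms together with the asserted isometric $G$-action.

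First I would confirm that $\mu$ is a measure on $\mathcal{B}$. Since $\mathcal{W}^{\hh}$ and $\mathcal{W}^{\vv}$ are disjoint, every set in $\mathcal{B}$ splits uniquely as a disjoint union of a set in $\mathcal{B}^{\hh}$ and a set in $\mathcal{B}^{\vv}$; thus $\mathcal{B}$ is the direct-sum $\sigma$-algebra and $\mu = \mu^{\hh} + \mu^{\vv}$ is unambiguous. The function $\mu^{\vv}$ is a weighted counting measure on the countable set $\mathcal{W}^{\vv}$, with $\mathcal{B}^{\vv}$ all subsets, hence a measure, while $\mu^{\hh}$ was shown to be a well-defined measure in Lemma~\ref{lem:welldefined} and the surrounding discussion. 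Therefore $\mu$ is a measure. The crucial measurability requirement, namely that $\omega(a,b) \in \mathcal{B}$ for all $a,b \in X$, is precisely Lemma~\ref{lem:separatingWallsAreMeasurableSet}, which moreover records $\mu(\omega(a,b)) < \infty$. This establishes that $(X, \mathcal{W}, \mathcal{B}, \mu)$ is a measured wallspace.

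Next I would verify that the $G$-action on $X$ extends to an isometric action on the wallspace. The equivariance $g\phi_{\tilde v}^{\tilde e} = \phi_{g\tilde v}^{g\tilde e}$ of the edge-space isomorphisms shows that the equivalence relation $\sim$ is $G$-invariant, so $G$ permutes its equivalence classes and hence the horizontal walls $\mathcal{W}^{\hh}$; likewise $G$ permutes the edges of the Bass--Serre tree $T$ and hence the vertical walls $\mathcal{W}^{\vv}$. Thus $G$ permutes $\mathcal{W}$ and preserves $\mathcal{B}$. For measure-invariance, $\mu^{\hh}$ is $G$-invariant by the lemma establishing $G$-invariance of $\mu^{\hh}$, and $\mu^{\vv}$ is $G$-invariant because the weight of the vertical wall $\Lambda_{g\tilde e_i}$ is the index $i$ of its edge orbit, which $G$ preserves. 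Hence $\mu(gU) = \mu(U)$ for every $U \in \mathcal{B}$, so the wall metric satisfies $\#(gx,gy) = \#(x,y)$ and $G$ acts by isometries.

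The genuine difficulty of this construction was the measurability and finiteness of the separating sets, and that was already dispatched in Lemma~\ref{lem:separatingWallsAreMeasurableSet}; consequently the corollary itself presents no real obstacle. Within this wrap-up the only points demanding care are the clean splitting of $\mathcal{B}$ and $\mu$ over the disjoint union $\mathcal{W}^{\hh} \sqcup \mathcal{W}^{\vv}$ and the $G$-invariance of the vertical weights, both of which are immediate from the definitions given above.
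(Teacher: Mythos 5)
Your proposal is correct and follows essentially the same route as the paper, whose own proof is a single sentence citing Lemma~\ref{lem:separatingWallsAreMeasurableSet} and implicitly relying on the preceding lemmas (well-definedness and $G$-invariance of $\mu^{\hh}$) exactly as you do. Your write-up simply makes explicit the assembly steps --- the direct-sum structure of $\mathcal{B}$ and $\mu$, and the $G$-invariance of the vertical weights --- that the paper leaves to the reader.
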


\begin{proof}
Lemma~\ref{lem:separatingWallsAreMeasurableSet} shows that $(X, \mathcal{W}, \mathcal{B}, \mu)$ is a measured wallspace.
\end{proof}

\begin{prop} \label{prop:actionIsMetricallyProper}
 The action of $G$ on $(X, \mathcal{W}, \mathcal{B}, \mu)$ is metrically proper.
\end{prop}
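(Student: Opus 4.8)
The plan is to prove metric properness directly: fixing a basepoint $x_0 \in X_{\tilde{v}_0}$ and a constant $D>0$, I will show that $S_D = \{g \in G : \#(x_0, gx_0) \le D\}$ is finite. The computation rests on the structural identity implicit in Lemma~\ref{lem:separatingWallsAreMeasurableSet}. Writing $\tilde{v}_0, \tilde{v}_1, \ldots, \tilde{v}_N = g\tilde{v}_0$ for the geodesic in $T$ with edges $\tilde{e}_1, \ldots, \tilde{e}_N$, and letting $p_i^{in}, p_i^{out} \in X_{\tilde{v}_i}$ be the incoming and outgoing edge-points at $\tilde{v}_i$ (with $p_0^{in} = x_0$ and $p_N^{out} = gx_0$), Lemma~\ref{lem:separatingWallsAreMeasurableSet} shows that the across-edge pairs are separated only by single vertical walls; hence a horizontal wall lies in $\omega(x_0, gx_0)$ exactly when it separates an odd number of the same-vertex pairs $(p_i^{in}, p_i^{out})$. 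Setting $W_i = \omega_{\tilde{v}_i}(p_i^{in}, p_i^{out})$, this reads $\omega^{\hh}(x_0, gx_0) = W_0 \triangle W_1 \triangle \cdots \triangle W_N$, while the vertical part of $\omega(x_0, gx_0)$ consists of the $N$ edges crossed. By Lemma~\ref{lem:key point}, $\mu^{\hh}$ restricts to $\mu_{\tilde{v}_i}$ on walls carried by $X_{\tilde{v}_i}$.

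Since each vertical wall has measure at least $1$, the hypothesis $\#(x_0, gx_0) \le D$ forces $d_T(\tilde{v}_0, g\tilde{v}_0) \le D$, and every crossed edge lies in one of the finitely many edge-orbits whose vertical wall has measure at most $D$. I then factor $S_D$ through the map $g \mapsto g\tilde{v}_0$ into the ball $B_D(\tilde{v}_0) \subseteq T$. The fibers are finite: if $g\tilde{v}_0 = g'\tilde{v}_0$ then $h := g^{-1}g' \in G_{\tilde{v}_0}$, and as $x_0, hx_0$ lie in the single vertex space $X_{\tilde{v}_0}$ only horizontal walls carried by $X_{\tilde{v}_0}$ separate them, so $\#(x_0, hx_0) = \#_{\tilde{v}_0}(x_0, hx_0) \le 2D$ by the triangle inequality and symmetry of $S_D$; metric properness of the vertex action of $G_{\tilde{v}_0}$ on $X_{\tilde{v}_0}$ then leaves only finitely many such $h$.

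It remains to see the image is finite. If not, the subtree spanned by $\{g\tilde{v}_0 : g \in S_D\}$ is infinite of depth $\le D$, hence has a vertex $\tilde{w}$ of infinite valence; taking $\tilde{w}$ closest to $\tilde{v}_0$, infinitely many geodesics $[\tilde{v}_0, g\tilde{v}_0]$ share the segment $[\tilde{v}_0, \tilde{w}]$ (as $T$ is a tree) and leave $\tilde{w}$ along distinct edges in orbits of measure $\le D$. By pigeonhole infinitely many of these exit edges lie in a single edge-orbit $e$, giving distinct cosets of $G_e$ in $G_{\tilde{w}}$. Condition~\eqref{ax:dispersed} says $G_e \leqslant G_{\tilde{w}}$ is dispersed relative to $x_{\tilde{w}}^{e}$, so the exit points $q_n = x_{\tilde{w}}^{\tilde{f}_n}$ satisfy $\#_{\tilde{w}}(p^{in}, q_n) \to \infty$, where $p^{in}$ is the common incoming point at $\tilde{w}$; otherwise infinitely many $q_n$ would lie within a fixed radius of $p^{in}$, hence pairwise close, violating dispersal.

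Transferring this divergence to $\#(x_0, g_n x_0)$ is the step I expect to be the main obstacle. Using the symmetric-difference identity, a wall of $W_j = \omega_{\tilde{w}}(p^{in}, q_n)$ that cuts neither the incoming edge-orbit $O_{in}$ through $p^{in}$ nor the outgoing edge-orbit $O_{out}$ through $q_n$ has supporting subtree $T_\Lambda$ meeting the geodesic only at $\tilde{w}$, so it flips only there and survives in $W_0 \triangle \cdots \triangle W_N$; thus it lies in $\omega(x_0, g_n x_0)$. This gives $\#(x_0, g_n x_0) \ge \#_{\tilde{w}}(p^{in}, q_n) - E_n$, where $E_n$ is the measure of those walls of $W_j$ that do cross $O_{in}$ or $O_{out}$. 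The crux is to bound $E_n$ uniformly in $n$ (or at least sublinearly in $\#_{\tilde{w}}(p^{in}, q_n)$): these are exactly the walls of $X_{\tilde{w}}$ that become vertical across $\tilde{e}_{in}$ or $\tilde{f}_n$, and I would control them using Conditions~\eqref{ax:funDomain} and~\eqref{ax:funDDDomain}, which organize the walls cutting an edge-orbit into a single fundamental domain and forbid horizontal walls from separating consecutive edge-points, the same mechanism already exploited in Lemma~\ref{lem:separatingWallsAreMeasurableSet}. Once $E_n$ is controlled, $\#(x_0, g_n x_0) \to \infty$ contradicts $g_n \in S_D$; hence the image is finite, $S_D$ is finite, the action is metrically proper, and aTmenability of $G$ follows from Lemma~\ref{lem:prop measured wallspace action gives atmenable}.
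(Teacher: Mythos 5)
Your overall skeleton runs parallel to the paper's proof (the paper argues by contradiction with a minimal subtree $S\subset T$ containing an orbit $Jx$ of pairwise-close points: the vertical-wall measures bound $\diameter(S)$ and force crossed edges into finitely many orbits, dispersal plus the escalating vertical measures give local finiteness, and metric properness of a vertex action finishes). However, the step you yourself flag as the main obstacle is a genuine gap, and the route you propose for closing it cannot work. The error term $E_n$ is in general neither uniformly bounded nor sublinear in $\#_{\tilde w}(p^{in},q_n)$. The reason is that $q_n=x_{\tilde w}^{\tilde f_n}$ is the image of an \emph{arbitrarily fixed} coset representative: the construction sets $x_{g\tilde u_o}^{g\tilde e_o}=g\,x_{\tilde u_o}^{\tilde e_o}$ for an arbitrary choice of $g$ in each coset $gG_e$. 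Concretely, take $G_{\tilde w}=\integers^2$ acting on $\reals^2$ with its continuous wallspace, $G_e=\langle(1,0)\rangle$ for both the incoming and exit edges, $p^{in}=(0,0)$, and representatives $g_n=(n^2,n)$ of the cosets $(0,n)+G_e$; all hypotheses of Theorem~\ref{thm:main}, including Condition~\eqref{ax:dispersed}, hold. Then $\#_{\tilde w}(p^{in},q_n)\approx n^2+n$ while the walls $\{x=c\}$, $0<c<n^2$, cut both edge-orbits and separate $p^{in}$ from $q_n$, so $E_n\approx n^2$ and $E_n/\#_{\tilde w}(p^{in},q_n)\to 1$. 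Conditions~\eqref{ax:funDomain} and~\eqref{ax:funDDDomain} control how walls cut an orbit, but say nothing about how far the chosen basepoints $q_n$ drift along the direction of $O_{in}$, which is exactly what inflates $E_n$.

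The correct mechanism --- and this is what the paper's proof uses --- is to never compare points, only orbits. Dispersal is by definition a statement about orbit-to-orbit distances $\#_{\tilde w}\bigl(g_iG_ex_{\tilde w}^{e},\,g_jG_ex_{\tilde w}^{e}\bigr)$, and these transfer to $(X,\mathcal W,\mathcal B,\mu)$ with \emph{no} error term: a wall of $\mathcal W_{\tilde w}$ separating one full orbit from another cuts neither of them, so by the construction of horizontal walls every vertex space beyond $\tilde f_i$ is assigned to the halfspace containing $g_iG_ex_{\tilde w}^{e}$ and every vertex space beyond $\tilde f_j$ to the other; hence every such wall separates $g_ix_0$ from $g_jx_0$. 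Since your infinitely many exit edges give infinitely many distinct cosets of $G_e$ in $G_{\tilde w}$, dispersal produces two with orbit distance $>2D$, so $\#(g_ix_0,g_jx_0)>2D$, contradicting the triangle inequality $\#(g_ix_0,g_jx_0)\le\#(g_ix_0,x_0)+\#(x_0,g_jx_0)\le 2D$. (In my counterexample above this is visible: the surviving measure equals the orbit-to-orbit distance $n$, which diverges even though $E_n$ does not stay small.) With this substitution for your $E_n$ step, your direct finiteness argument for $S_D$ goes through and is then essentially the paper's argument in different packaging.
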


\begin{proof}
 Let $x\in X$.
 Suppose that for some infinite subset $J\subset G$ and some $d>0$, we have $\#(g_ix,g_jx)<d$ for all $g_i,g_j\in J$.
 Let $S\subset T$ be the smallest subtree whose vertex spaces contain the points $\{gx: g\in J\}$.
 Note that $S$ has diameter~$\leq d$ since each edge has a vertical wall with measure $\geq 1$.

 We now verify that $S$ is locally finite.
 Indeed, if there were infinitely many edges at some $\tilde v\in S^0$, then either there would exist an edge $e$ incident to $\tilde v$, whose $G_{\tilde v}$-orbit contains infinitely many edges of $S$ at $\tilde v$, or else there are infinitely many edges in $S$ incident to $\tilde v$ that belong to distinct $G_{\tilde v}$-orbits.

 In the former case let $\{g_1 \tilde e, g_2 \tilde e, \ldots, \}_{g_i\in G_{\tilde v}}$ be an enumeration of infinitely many edges in a $G_{\tilde v}$-orbit.
 Letting $G_{\tilde e}x_{\tilde e}^{\tilde v} \subset X_{\tilde v}$ be the basepoint orbit corresponding to $\tilde e$,
 its translate   $g_iG_{\tilde e}x_{\tilde e}^{\tilde v}  \subset X_{\tilde v}$ is the basepoint corresponding to $g_i{\tilde e}$.
 By Condition~\eqref{ax:dispersed} of Theorem~\ref{thm:main}, the sets $\{g_i G_{\tilde e} x_{\tilde e}^{\tilde v}\}_{i \in \naturals}$ are
 dispersed in
 $(X_{\tilde v}, \mathcal{W}_{\tilde v}, \mathcal{B}_{\tilde v}, \mu_{\tilde v})$ and hence in $(X, \mathcal{W}, \mathcal{B}, \mu)$.
 Therefore, there exists $i,j \in \mathbb{N}$ such that $\#(g_iG_{\tilde e} x_{\tilde e}^{\tilde v}, g_jG_{\tilde e} x_{\tilde e}^{\tilde v}) > d$.
 By construction $g_iG_{\tilde e}x_{\tilde e}^{\tilde v} = G_{g_i \tilde e}x^{\tilde v}_{g_i \tilde e}$.
 Consequently, the elements of $Jx$ lying in vertex spaces separated by $g_i \tilde e$ and $g_j \tilde e$ must be at distance at least $d+2$ from each other.

 In the latter case, by definition of the measure on the vertical walls there must exist some edge $\tilde e$ in $S$, incident to $\tilde v$ such that the corresponding vertical wall $\Lambda_{\tilde e}$ has $\mu(\Lambda_{\tilde e}) > d$.
 This would imply that the two orbit points $g_1 \tilde x, g_2 \tilde x$ lying in vertices separated by $\tilde e$ have $\#(g_1 \tilde x, g_2 \tilde x) >d$.

 Therefore, $S$ is locally finite and $\diameter(S)<\infty$, hence $S$ has finitely many vertices.
 Thus, there exists an infinite subset $J'\subset J$ such that the elements of $\{g_j \tilde x : j\in J'\}$ all lie in the same vertex space $X_{\tilde v}$.
 This contradicts that $G_{\tilde v}$ acts metrically properly on $(X_{\tilde v}, \mathcal{W}_{\tilde v}, \mathcal{B}_{\tilde v}, \mu_{\tilde v})$ since $\#_{\tilde v}(p,q)=\#(p,q)$ for $p,q\in X_{\tilde v}$ by construction.
\end{proof}

\bibliographystyle{alpha}
\bibliography{wise}
\end{document}